\tikzstyle{bag} = [align=center]
\newcommand{\R}{\mathbb{R}}
\newcommand{\N}{\mathbb{N}}
\newcommand{\Z}{\mathbb{Z}}
\newcommand{\norm}[1]{\left\lVert #1 \right\rVert}
\newcommand{\lap}{\Delta}
\newcommand{\grad}{\nabla}
\DeclareMathOperator*{\argmin}{arg\,min}
\DeclareMathOperator*{\argmax}{arg\,max}
\newtheorem{assumption}{Assumption}
\crefname{assumption}{Assumption}{Assumptions}
\crefname{algorithm}{Algorithm}{Algorithms}
\crefname{definition}{Definition}{Definitions}
\crefname{remark}{Remark}{Remarks}
\crefname{equation}{}{}
\crefname{appendix}{Appendix}{Appendices}
\crefname{lemma}{Lemma}{Lemmas}
\crefname{proposition}{Proposition}{Propositions}
\crefname{theorem}{Theorem}{Theorems}
\newcommand{\bdmc}[1]{\boldsymbol{\mathcal{#1}}}
\newcommand{\dd}{\,\text{\normalfont\ignorespaces d}}
\newcommand{\bzero}{\bm{0}}
\newcommand{\be}{\bm{e}}
\newcommand{\bff}{\bm{f}}
\newcommand{\bg}{\bm{g}}
\newcommand{\bmm}{\bm{m}}
\newcommand{\bn}{\bm{n}}
\newcommand{\bp}{\bm{p}}
\newcommand{\bu}{\bm{u}}
\newcommand{\bv}{\bm{v}}
\newcommand{\bw}{\bm{w}}
\newcommand{\bx}{\bm{x}}
\newcommand{\by}{\bm{y}}
\newcommand{\bA}{\bm{A}}
\newcommand{\bB}{\bm{B}}
\newcommand{\bC}{\bm{C}}
\newcommand{\bD}{\bm{D}}
\newcommand{\bF}{\bm{F}}
\newcommand{\bH}{\bm{H}}
\newcommand{\bI}{\bm{I}}
\newcommand{\bJ}{\bm{J}}
\newcommand{\bM}{\bm{M}}
\newcommand{\bP}{\bm{P}}
\newcommand{\bV}{\bm{V}}
\newcommand{\bxi}{{\bm{\xi}}}
\newcommand{\calA}{{\mathcal{A}}}
\newcommand{\calB}{{\mathcal{B}}}
\newcommand{\calC}{{\mathcal{C}}}
\newcommand{\calD}{{\mathcal{D}}}
\newcommand{\calE}{{\mathcal{E}}}
\newcommand{\calF}{{\mathcal{F}}}
\newcommand{\calH}{{\mathcal{H}}}
\newcommand{\calI}{{\mathcal{I}}}
\newcommand{\calK}{{\mathcal{K}}}
\newcommand{\calL}{{\mathcal{L}}}
\newcommand{\calM}{{\mathcal{M}}}
\newcommand{\calN}{{\mathcal{N}}}
\newcommand{\calP}{{\mathcal{P}}}
\newcommand{\calQ}{{\mathcal{Q}}}
\newcommand{\calR}{{\mathcal{R}}}
\newcommand{\calS}{{\mathcal{S}}}
\newcommand{\calT}{{\mathcal{T}}}
\newcommand{\calU}{{\mathcal{U}}}
\newcommand{\calV}{{\mathcal{V}}}
\newcommand{\calW}{{\mathcal{W}}}
\newcommand{\scrA}{{\mathscr{A}}}
\newcommand{\scrE}{{\mathscr{E}}}
\newcommand{\scrH}{{\mathscr{H}}}
\newcommand{\scrM}{{\mathscr{M}}}
\newcommand{\scrN}{{\mathscr{N}}}
\newcommand{\scrP}{{\mathscr{P}}}
\newcommand{\scrU}{{\mathscr{U}}}
\newcommand{\scrV}{{\mathscr{V}}}
\newcommand{\scrX}{{\mathscr{X}}}
\newcommand{\scrY}{{\mathscr{Y}}}
\newcolumntype{C}{>{$\displaystyle}c<{$}}
\newcolumntype{V}{ >{\centering\arraybackslash} m{1cm} }
\newcommand{\cpr}{\mathcal{C}_{\text{\normalfont\ignorespaces pr}}}
\newcommand{\cpo}{\mathcal{C}_{\text{\normalfont\ignorespaces post}}}
\newcommand{\sder}{D_{\scrH_{\mu}}}
\newcommand{\nn}{\bff_{\text{\normalfont\ignorespaces NN}}}
\newcommand{\iid}{\stackrel{\text{\normalfont\ignorespaces i.i.d.}}{\sim}}
\newcommand{\meas}{\text{\normalfont\ignorespaces d}}
\newcommand{\HS}{\text{\normalfont\ignorespaces HS}}
\newcommand{\mmala}{\text{\normalfont\ignorespaces mMALA}}
\newcommand{\disbasis}[1]{\psi^{\text{\normalfont\ignorespaces DIS}}_{#1}}
\newcommand{\disev}[1]{\lambda^{\text{\normalfont\ignorespaces DIS}}_{#1}}
\newcommand{\klebasis}[1]{\psi^{\text{\normalfont\ignorespaces KLE}}_{#1}}
\newcommand{\kleev}[1]{\lambda^{\text{\normalfont\ignorespaces KLE}}_{#1}}
\newcommand*{\vertbar}{\rule[-1.5ex]{0.5pt}{3ex}}
\newcommand*{\horzbar}{\rule[.5ex]{2.5ex}{0.5pt}}
\newcommand{\cmark}{\ding{51}}%
\newcommand{\xmark}{\ding{55}}%
\newcommand{\multiline}[1]{%
    \begin{tabularx}{\dimexpr\linewidth-\ALG@thistlm}[t]{@{}X@{}}
        #1
    \end{tabularx}
}
\renewcommand*{\thanks}[1]{%
  \footnotemark
  \protected@xdef\@thanks{\@thanks
    \protect\footnotetext[\arabic{footnote}]{#1}}%
}
\begin{document}

\title{Derivative-Informed Neural Operator Acceleration of Geometric MCMC for Infinite-Dimensional Bayesian Inverse Problems}

\author{\name Lianghao Cao\thanks{Corresponding author} \email lianghao@caltech.edu \\
       \addr Department of Computing and Mathematical Sciences\\
       California Institute of Technology\\
       Pasadena, CA 91125, USA.
       \AND
       \name Thomas O'Leary-Roseberry$^\dagger$ \email tom.olearyroseberry@utexas.edu \\
        \name Omar Ghattas$^{\dagger,\ddagger}$ \email omar@oden.utexas.edu\\
       \addr $^\dagger$Oden Institute for Computational Engineering and Sciences\\
       \addr $^\ddagger$Waker Department of Mechanical Engineering\\
       The University of Texas at Austin\\
       Austin, TX 78712, USA.
       }
\editor{ }
\maketitle
\begin{abstract}%
We propose an operator learning approach to accelerate geometric Markov chain Monte Carlo (MCMC) for solving infinite-dimensional Bayesian inverse problems (BIPs). While geometric MCMC employs high-quality proposals that adapt to posterior local geometry, it requires repeated computations of gradients and Hessians of the log-likelihood, which becomes prohibitive when the parameter-to-observable (PtO) map is defined through expensive-to-solve parametric partial differential equations (PDEs). We consider a delayed-acceptance geometric MCMC method driven by a neural operator surrogate of the PtO map, where the proposal exploits fast surrogate predictions of the log-likelihood and, simultaneously, its gradient and Hessian. To achieve a substantial speedup, the surrogate must accurately approximate the PtO map and its Jacobian, which often demands a prohibitively large number of PtO map samples via conventional operator learning methods. In this work, we present an extension of derivative-informed operator learning [O'Leary-Roseberry et al., \textit{J.\ Comput.\ Phys.}, 496 (2024)] that uses joint samples of the PtO map and its Jacobian. This leads to derivative-informed neural operator (DINO) surrogates that accurately predict the observables and posterior local geometry at a significantly lower training cost than conventional methods. Cost and error analysis for reduced basis DINO surrogates are provided. Numerical studies demonstrate that DINO-driven MCMC generates effective posterior samples 3--9 times faster than geometric MCMC and 60--97 times faster than prior geometry-based MCMC. Furthermore, the training cost of DINO surrogates breaks even compared to geometric MCMC after just 10--25 effective posterior samples.

\end{abstract}
\begin{keywords}
  Inverse problem, scientific machine learning, uncertainty quantification, MCMC, neural operator
\end{keywords}
\section{Introduction}

\subsection{Quality--cost trade-off in MCMC for Bayesian inverse problems}
Continuum models of physical systems arising in scientific and engineering problems, such as those governed by partial differential equations (PDEs), often contain unspecified or uncertain parameters in the form of spatially and temporally varying functions. Given sparse and noisy observations of the system, infinite-dimensional inverse problems aim to infer the parameter function at which model predictions of observations (i.e., the \textit{observables}) best explain the observed data. A Bayesian formulation is often adopted to rigorously account for various uncertainties in inverse problems, whose solutions are represented as probability distributions of parameters (i.e., the \textit{posterior}). Bayesian inverse problems (BIPs) are of great practical importance due to their ability to enhance the predictability and reliability of computational models for better design, control, and more general decision-making \citep{Biegler2010, Oden2017, kouri2018optimization, Ghattas2021, alexanderian2021inverse}.

Markov chain Monte Carlo (MCMC) methods based on the Metropolis--Hastings (MH) algorithm \citep{metropolis1953equation, hastings1970monte} construct Markov chains whose stationary distributions are the Bayesian posterior \citep{robert2004monte, roberts2004general}. These methods are regarded as the gold standard for rigorous solutions of nonlinear BIPs due to their algorithmic simplicity and asymptotic posterior sampling consistency. A fundamental challenge in designing an efficient MCMC method for BIPs is to optimize the balance between the quality of generated Markov chains and the associated computational cost. The quality of a Markov chain can be quantified by, e.g., its effective sample size and mixing time. The computational cost consists of proposal sampling and acceptance probability computation at each chain position; the latter often involves evaluating the nonlinear \textit{parameter-to-observable (PtO) map} via solving large-scale parametric PDEs.

To generate high-quality Markov chains for posterior sampling, an MCMC method must be capable of agile exploration of the posterior landscape. This often requires intelligent MCMC proposal designs using either (i) the local curvature of the posterior landscape or (ii) a surrogate PtO map. These two approaches represent two major but mostly distinct developments of MCMC methods for infinite-dimensional BIPs: (i) \textit{geometric MCMC} methods with proposals that adapt to posterior local geometry \citep{girolami2011riemann, martin2012stochastic, law2014proposals, bui2014solving, lan2016emulation, beskos2017geometric, lan2019adaptive} and (ii) \textit{delayed-acceptance (DA) MCMC} methods with proposals informed by the surrogate posterior \citep{christen2005markov, efendiev2006preconditioning, lykkegaard2023multilevel}. However, high quality comes at a high cost. For geometric MCMC, exploiting posterior local geometry requires computing the \textit{Jacobian} of the PtO map through solutions of sensitivity or adjoint problems of the PDEs at each chain position. In practice, the accumulated cost of these linear solves often overwhelms their benefit. Moreover, for DA MCMC, constructing a data-driven surrogate of the nonlinear PtO map with infinite-dimensional parameter space may necessitate a large number of offline (prior to MCMC) model solutions to achieve a substantial online (during MCMC) acceleration.

\begin{remark}
    Here, we use Jacobian as a generic term that refers to the derivative of the observable vector with respect to the parameter function. However, there are multiple definitions of differentiability for nonlinear mappings in function spaces. We provide precise names and definitions for the derivatives and Jacobians of the PtO map in \cref{subsec:stochastic_derivative,subsec:hs_matrix}.
\end{remark}

\subsection{Geometric MCMC driven by derivative-informed neural operator}

We consider the use of neural operator surrogates of the PtO map to design MCMC methods based on the following observation: an ideal quality--cost trade-off in an MCMC method can be achieved by a surrogate PtO map that is fast and accurate in predicting both the observables (for DA MCMC) and the posterior local geometry (for geometric MCMC). Neural operators \citep{kovachki2023nueral, kovachki2024operator}, i.e., nonlinear mappings between function spaces constructed using neural networks, have the potential to provide a good approximation of the PtO map in infinite-dimensional BIPs. Notably, the Jacobian of a neural operator can be extracted through automatic differentiation at a low cost.

On the other hand, conventional operator learning methods using input--output samples of the target mapping (i.e., supervised learning) do not enforce direct control of the Jacobian approximation error; thus, the training cost for achieving a small Jacobian approximation error can be prohibitively high for large-scale PDE-constrained target mappings. As a result, neural operator surrogates often struggle to accelerate gradient-based optimization in high or infinite dimensions, where the surrogate-predicted gradient of the optimization objective function substitutes the model-predicted gradient; see, e.g., \citet[Section 4.2.2]{luo2023efficient}. Similarly, we expect the neural operator surrogate constructed via conventional operator learning to struggle in accelerating geometric MCMC.

In this work, we propose an efficient geometric MCMC method leveraging derivative-informed neural operator (DINO, \citealt{oleary2024derivative}). Compared to conventional operator learning with input--output error control, derivative-informed operator learning additionally enforces Jacobian error control. In the setting of BIPs, we generate samples of the PtO map and its Jacobian to train a DINO PtO map surrogate during the offline phase. This surrogate can achieve significantly higher accuracy in predicting both the observables and the Jacobian at a similar training cost as conventional operator learning methods. During the online phase, we deploy the trained DINO surrogate in a delayed-acceptance geometric MCMC method, where both the DINO prediction of posterior local geometry and the DINO PtO map contribute to generating high-quality Markov chains for posterior sampling at a considerably lower cost than conventional geometric MCMC methods.

We provide rigorous comparisons of our method with various baseline MCMC methods for solving challenging BIPs, such as coefficient inversion for a nonlinear diffusion--reaction PDE and inference of a heterogeneous hyperelastic material property. Our numerical results show that
\begin{enumerate}[leftmargin=*]
\item DINO-driven MCMC generates effective posterior samples 60--97 times faster than MCMC based on prior geometry (precondition Crank--Nicolson, \citealt{cotter2013mcmc}), 3--9 times faster than geometric MCMC, and 18--40 times faster than using a conventionally trained neural operator surrogate. When accounting for training sample generation cost, the training cost of DINO surrogates breaks even after collecting just 10--25 effective posterior samples compared to geometric MCMC.

\item Derivative-informed operator learning achieves a surrogate accuracy of the the PtO map Jacobian similar to the surrogate accuracy of the PtO map itself. This is achieved at 16--25 times lower cost in training sample generation than the conventional operator learning method. In our nonlinear diffusion--reaction numerical examples, we observe an estimated 166 times difference in training sample generation cost between the two operator learning methods to achieve an acceleration of geometric MCMC measured by the speed of effective posterior sample generation.

\end{enumerate}

\subsection{Literature review}\label{subsec:literature_review}
In this subsection, we cover literature relevant to our work on neural operator surrogates and MCMC for BIPs.

\subsubsection{Neural operator surrogates}

Constructing neural operator surrogates involves (i) using neural networks to design an architecture that maps between function spaces and allows for sufficient expressivity and (ii) approximating a target mapping by training the neural networks via supervised or semi-supervised learning. The key feature of a neural operator is that its architecture and learning scheme are independent of any particular discretization of the input and output space. We use the term neural operator when input or output belongs to a function space, as it necessitates a neural operator architecture and learning scheme. In this subsection, we briefly overview concepts related to neural operator surrogates that are relevant to our work.

\begin{itemize}[leftmargin=0pt, label={}]
    \item \textit{Neural operator architectures.} The architecture most relevant to this work is reduced basis neural operators that use neural networks to learn the finite-dimensional nonlinear mapping between coefficients of input and output reduced bases. Choices of reduced bases include but are not limited to (i) proper orthogonal decomposition (POD-NN, \citealt{Hesthaven2018}) or principal component analysis (PCA-Net, \citealt{bhattacharya2021model}), (ii) active subspace or derivative-informed subspace (DIP-Net, \citealt{oleary2022learning, oleary2022derivative}), (iii) learned neural network representation of output reduced bases (DeepONet, \citealt{LuJinKarniadakis2019}), and (iv) variational autoencoders (VANO, \citealt{seidman2023variational}). Other architectures include the Fourier neural operator (FNO, \citealt{li2021fourier}) and its variants \citep{cao2023lno, lanthaler2023nonlocal, li2020neural, li2020multipole}. See empirical comparisons of neural operator architectures for learning solution operators of PDEs by \citet{deHoop2022cost, lu2022comprehensive}.

    \item \textit{Operator learning objective.} The operator learning objective function is typically designed to control approximate error in the Bochner norm of nonlinear mappings between function spaces \citep{kovachki2023nueral}. When the objective function is approximated using samples, it leads to a loss function for empirical risk minimization. There are efforts to enhance the loss function using spatial evaluations of strong-form PDE residual, notably for FNO (PINO, \citealt{li2024physics}) and DeepONet (PI-DeepONet, \citealt{wang2021learning}) architectures. The focus of this work is DINO \citep{oleary2024derivative}. Its operator learning objective function is designed to control approximation error in high-dimensional Sobolev spaces of nonlinear mappings. This objective leads to neural operator surrogates that are accurate in both input--output and Jacobian evaluations. DINOs have been successfully deployed in optimization under uncertainty \citep{luo2023efficient} and optimal experimental design \citep{go2023accelerating}, demonstrating notable improvements compared to conventional operator learning methods.
    
    \item \textit{Jacobian vs.\ spatial derivative.} We emphasize the distinction between the Jacobian of the neural operator and the spatial derivative of neural operator output. Evaluating the Jacobian requires differentiating the nonlinear mappings from a parameter to the PDE solution. It often requires repeatedly solving direct or adjoint sensitivity problems with different right-hand side vectors \citep{Ghattas2021}. Controlling approximation error in the Jacobian is challenging in the operator learning setting. On the other hand, controlling approximation error in the spatial derivatives of spatially varying output functions can be implemented straightforwardly using a Sobolev norm over the spatial domain (e.g., $H^1(\Omega)$ norm where $\Omega$ is a spatial domain) for output error measure.
\end{itemize}

\subsubsection{MCMC for Bayesian inverse problems}
We briefly overview three aspects of MCMC methods for infinite-dimensional BIPs relevant to this work. Technical descriptions of some of these methods can be found in \cref{sec:preliminary}.
\begin{itemize}[leftmargin=0pt, label={}]
    \item \textit{Scalability.} During computation, the posterior is approximated on a discretized finite-dimensional subspace of the parameter function space via, e.g., the Galerkin method. Many popular MCMC methods that target the discretized posterior, such as random walk Metropolis, suffer from a deterioration in sampling performance as the discretization dimension increases. A class of \textit{dimension-independent MCMC methods} has emerged \citep{cotter2013mcmc,hairer2014spectral, law2014proposals, cui2016dimension, bui2016fem, rudolf2018generalized} that seek first to design MCMC methods that are well-posed on function spaces and then discretize for computation.

    \item \textit{Exploiting posterior geometry.} A class of \textit{geometric MCMC} methods gained attention in the last decade due to their information geometric approaches to proposal design. Relevant developments in this area include but are not limited to (i) proposals employing fixed or averaged posterior geometry, such as likelihood-informed subspace \citep{cui2016dimension}, active subspace \citep{constantine2016accelerating}, variational and Laplace approximation \citep{pinski2015algroithms, rudolf2018generalized, Petra2014, kim2023hippylibmuq}, and adaptive dimension reduction \citep{lan2019adaptive}; (ii) proposals that adapt to posterior local geometry, such as Riemannian manifold MCMC using the Jacobian and Hessian of the PtO map \citep{girolami2011riemann, bui2014solving}, stochastic Newton MCMC using the Jacobian and the low-rank approximation of the Hessian \citep{martin2012stochastic}, Gaussian process emulation of geometric quantities \citep{lan2016emulation}, and dimension-independent geometric MCMC using the Jacobian, i.e., using a simplified manifold \citep{law2014proposals, beskos2017geometric, lan2019adaptive}. Other notable developments include proposal designs using local likelihood approximations \citep{patrick2016accelerating} and transport maps constructed by low-fidelity model solutions \citep{peherstorfer2019transport}.
    \item \textit{Multifidelity acceleration.} 
        Cheap-to-evaluate low-fidelity models can help alleviate the cost of MCMC due to high-fidelity model solutions via \textit{delayed acceptance (DA) MCMC} \citep{christen2005markov, efendiev2006preconditioning, lykkegaard2023multilevel}. It uses a proposal given by the Markov chain transition rule of an MCMC method targeting the surrogate posterior, leading to a two-stage \citep{christen2005markov} or multi-stage \citep{lykkegaard2023multilevel} procedure for single or multiple surrogate models. \textit{Multilevel MCMC} methods \citep{hoang2013complexity, latz2018multilevel, dodwell2019multilevel, cui2024multilevel} employ a hierarchy of discretizations of a PDE model to reduce the overall computational cost of MCMC.
\end{itemize}

\subsection{Contributions}\label{subsec:contribution}
The main contributions of this work are summarized as follows.

\begin{itemize}[leftmargin=0pt, label={}]
    \item \textit{Formulation and analysis of DINO.} We present an extended formulation of the derivative-informed operator learning proposed by \citet{oleary2024derivative}. In particular, we establish suitable function space settings, i.e., the $H^1_{\mu}$ Sobolev space with Gaussian measure (\cref{subsec:h1_definition}), for derivative-informed operator learning that can be extended beyond the confines of inverse problems and particular choices of neural operator architecture. We also provide (i) a cost analysis for training data generation based on PDE models (\cref{subsec:data_generation}) and (ii) theoretical results on the neural operator approximation error of reduced basis DINO surrogate based on a Poincar\'e inequality for nonlinear mappings between function spaces (\cref{subsec:error}). 
    \item \textit{Efficient DINO-driven geometric MCMC.} We propose an efficient MCMC method (\cref{alg:mcmc}) for infinite-dimensional BIPs via a synthesis of ideas from reduced basis DINO surrogates, DA MCMC, and dimension-independent geometric MCMC. The method employs a proposal that adapts to DINO-predicted posterior local geometry within a delayed acceptance procedure. Compared to conventional geometric MCMC, our method leads to significant cost reduction due to (i) no online forward or adjoint sensitivity solves, (ii) fewer online PDE solves necessary for posterior consistency, and (iii) reduced need for prior sampling. At the same time, our numerical examples show that the method produces high-quality Markov chains typical of a geometric MCMC method, leading to substantial speedups in posterior sampling.
    \item \textit{Detailed numerical studies\footnote{Public release of code and data is contingent on acceptance for journal publication.}.} We provide detailed numerical studies of our methods and other baseline MCMC methods using two infinite-dimensional BIPs: coefficient inversion for a nonlinear diffusion-reaction PDE and inference of a heterogeneous hyperelastic material property.
\end{itemize}

\subsection{Layout of the paper}\label{subsec:layout}

The layout of the paper is as follows. In \cref{sec:preliminary}, we introduce concepts in BIPs and MCMC, including precision definitions of differentiability, posterior local approximation, dimension-independent geometric MCMC, and delayed acceptance MCMC. In \cref{sec:operator_learning}, we present a derivative-informed operator learning method with error control in the $H^1_{\mu}$ Sobolev space with Gaussian measure. In \cref{sec:dino}, we formulate derivative-informed training of reduced basis DINO, discuss its computational cost, and provide error analysis for different choices of reduced bases. In \cref{sec:surrogate_mcmc}, we detail the process of generating proposals that adapt to posterior local geometry with a trained neural operator surrogate and the resulting MCMC acceptance probability computation. In \cref{sec:results_set_up}, we explain the setup for our numerical examples, including an extensive list of baseline and reference MCMC methods, Markov chain diagnostics, efficiency metrics, and software.  In \cref{sec:ndr,sec:hyperelastic}, we showcase and analyze numerical results.
\section{Preliminaries: Bayesian inverse problems and MCMC}\label{sec:preliminary}

\subsection{Notations}
\begin{itemize}[leftmargin=*]
    \item $\langle\cdot,\cdot\rangle_{\mathscr{X}}$ denotes the inner-product on a Hilbert space $\mathscr{X}$ and $\norm{\cdot}_{\mathscr{X}}$ denotes the inner-product induced norm. The subscript is omitted when $\mathscr{X}$ is an Euclidean space.
    \item $\langle x_1, x_2\rangle_{\mathcal{T}}\coloneqq \langle \mathcal{T}^{1/2}x_1, \mathcal{T}^{1/2}x_2\rangle_{\mathscr{X}}$ and $\norm{x}_{\mathcal{T}}\coloneqq\sqrt{\langle\mathcal{T}^{1/2}x, \mathcal{T}^{1/2}x\rangle_{\mathscr{X}}}$ denote the inner-product and norm weighted by a positive and self-adjoint operator $\mathcal{T}:\scrX\to\scrX$ and $\mathcal{T}^{1/2}$ denotes the square root of $\mathcal{T}$. The square root is not explicitly required during computation.
    \item $B(\mathscr{X}_1, \mathscr{X}_2)$ denotes the Banach space of bounded and linear operators between two Hilbert spaces $\mathscr{X}_1$ and $\mathscr{X}_2$ equipped with the operator norm. We use $B(\mathscr{X})$ for $B(\mathscr{X}, \mathscr{X})$.
    \item $\HS(\scrX_1,\scrX_2)\subseteq B(\mathscr{X}_1, \mathscr{X}_2)$ denotes the Banach space of Hilbert--Schmidt operators. $\norm{\cdot}_{\HS}$ denotes the Hilbert--Schmidt norm. We use $\HS(\scrX)$ for $\HS(\scrX,\scrX)$.
    \item $B^{+}_1(\mathscr{X})\subseteq \HS (\mathscr{X})$ denotes the set of positive, self-adjoint, and trace class operators on a Hilbert space $\mathscr{X}$.
    \item $(\mathscr{X},\mathcal{B}(\mathscr{X}))$ denotes a measurable space with $\mathcal{B}(\cdot)$ being the Borel $\sigma$-algebra generated by open sets. $\scrP(\scrX)$ denotes the set of probability measures defined on $(\mathscr{X},\mathcal{B}(\mathscr{X}))$.
    \item $\nu(\dd x)$ denotes a measure on $(\mathscr{X},\mathcal{B}(\mathscr{X}))$ in the sense that $\nu(\scrA) = \int_{\scrA} \nu(\dd x)$, where $\scrA\in\mathcal{B}(\mathscr{X})$ and $x$ is a dummy variable for integration. The expression $\nu_1(\meas x)/\nu_2(\meas x)$ denotes the Radon--Nikodym derivative between two measures $\nu_1,\nu_2\in\scrP(\scrX)$ at $x\in\scrX$.
    \item We use capital letters to denote random variables, i.e., $X\sim \nu\in\scrP(\scrX)$. Both matrices and random vectors are denoted using bold and capitalized letters; they can be distinguished based on the context.
\end{itemize}

\subsection{Nonlinear Bayesian inverse problem}

Let $\mathscr{M}$ be a separable Hilbert space. We refer to $\mathscr{M}$ as the \textit{parameter space}. Let $\bdmc{G}: \scrM\to \R^{d_y}$ be a nonlinear \textit{parameter-to-observable (PtO) map} that represents model predictions of the \textit{observable vector}. We refer to the $\R^{d_y}$, $d_y\in\N$, as the \textit{observable space}. Let $\by\in\R^{d_y}$ denote a set of observed data. We assume that $\by$ is given by a model-predicted observable vector at unknown parameter $m\in\scrM$ corrupted by unknown additive noise $\bn\in\R^{d_y}$:
\begin{align}\label{eq:data_model}
    \by = \bdmc{G}(m) + \bn\,,\quad \bn\iid \pi_{n}\,, && (\text{Data model})
\end{align}
where $\pi_{n}\in\scrP(\R^{d_y})$ is the noise probability density\footnote{For finite-dimensional distributions, we assume their probability densities exist and do not distinguish between densities and measures.}. The inverse problem is to recover $m$ given data $\by$. 

Under the Bayesian approach to inverse problems, we assume prior knowledge of the parameter represented by a \textit{prior distribution} $\mu\in\scrP(\scrM)$. We are interested in characterizing the \textit{posterior distributions} $\mu^{\by}\in\scrP(\scrM)$ representing our updated knowledge of the parameter after acquiring data $\by$. The posterior is defined by Bayes' rule using a Radon--Nikodym (RN) derivative:
\begin{align}\label{eq:bayes_rule}
    \frac{\mu^{\by}(\meas m)}{\mu(\meas m)} &\propto \pi_n(\by-\bdmc{G}(m))\,. && (\text{Bayes' rule})
\end{align}
We adopt the following assumptions, often employed for infinite-dimensional BIPs, e.g., when $\mathscr{M}$ consists of spatially or temporally varying functions.
\begin{assumption}[Gaussian noise]\label{ass:gauss_noise}
The noise distribution is given by $\pi_n\coloneqq\calN(\boldsymbol{0}, \bC_n)$ with covariance matrix $\bC_n\in B_1^{+}(\R^{d_y})$. This leads to the following form of the Bayes' rule:
    \begin{equation}\label{eq:bayes_rule_gauss}
        \frac{\mu^{\by}(\meas m)}{\mu(\meas m)} \coloneqq \frac{1}{z(\by)}\exp\left(-\Phi^{\by}(m)\right)\,,\quad \Phi^{\by}(m) \coloneqq \frac{1}{2}\norm{\by-\bdmc{G}(m)}^2_{\bC_n^{-1}}\quad \mu\text{-a.e.}\,, 
    \end{equation}
    where $\Phi^{\by}:\scrM\to\R$ is the data misfit and $z(\by)\coloneqq\mathbb{E}_{M\sim\mu}\left[\Phi^{\by}(M)\right]$ is the normalization constant.
\end{assumption}
\begin{assumption}[Gaussian prior]\label{ass:gauss_prior}
    The prior distribution is given by $\mu\coloneqq\mathcal{N}(0, \cpr)$ with covariance operator $\cpr\in B_1^{+}(\scrM)$.
\end{assumption}

\begin{assumption}[Well-posedness, {\citealt[Corollary 4.4]{Stuart2010}}]\label{ass:well_posedness}
    The PtO map $\bdmc{G}$ is $\mu$-a.e.\ well-defined, sufficiently bounded, and locally Lipshitz continuous, which implies that the Bayesian inversion is well-posed.
\end{assumption}
\subsection{The Cameron--Martin space and differentiability}\label{subsec:stochastic_derivative}
We consider two Hilbert spaces with prior and noise covariance inverse-weighted inner products on the observable and parameter space. These spaces are known as the Cameron--Martin (CM) space of $\mu$ and $\pi_n$.
\begin{align*}
    &\text{Parameter CM space } \scrH_{\mu}: \quad\left(\left\{m\in\scrM\;\big\vert\; \norm{m}_{\cpr^{-1}}<\infty\right\}, \left\langle\cdot,\cdot\right\rangle_{\cpr^{-1}}\right)\,,\\
    &\text{Observable CM space $\scrY$}: \qquad\qquad\left(\R^{d_y}, \left\langle\cdot,\cdot\right\rangle_{\bC_n^{-1}}\right)\,.
\end{align*}
We have the continuous embedding $\scrH_{\mu}\hookrightarrow\scrM$, i.e., there exists $c>0$ such that $\norm{m}_{\scrM}\leq c\norm{m}_{\cpr^{-1}}$ for all $m\in\scrH_{\mu}$. However, the converse is not true when $\scrM$ has infinite dimensions, e.g., $\norm{M}_{\cpr^{-1}}=\infty$ and $\norm{M}_{\scrM}< \infty$ a.s.\ for $M\sim\mu$. The observable CM space $\scrY$ is isomorphic to $\R^{d_y}$ under the identity map, yet the weighted inner product of $\scrY$ is often preferred in the context of inverse problems as it reflects our confidence in the observed data.

The CM space plays a major role in understanding the equivalence of measures due to the linear transformations of infinite-dimensional Gaussian random functions, and it will be applied extensively in this work. We refer to \citet[Section 2.7]{sullivan2015intro}, and \citet{bogachev1998gaussian}, and \citet{Stuart2010} for detailed references on Gaussian measures and the CM space.

In addition to \cref{ass:gauss_prior,ass:gauss_noise,ass:well_posedness}, we assume the directional differentiability of the PtO map along the parameter CM space $\scrH_{\mu}$.
\begin{assumption}[stochastic G\^ateaux differentiability, {{\citealt[5.2.3]{bogachev1998gaussian}}}]\label{ass:stochastic_derivative}
    There exists a mapping $D_{\scrH_{\mu}}\bdmc{G}:\scrM\to \HS(\scrH_{\mu}, \scrY)$ such that for any $\delta m\in \scrH_{\mu}$, 
    \begin{equation*}
        \lim_{t\to 0}\norm{t^{-1}\left(\bdmc{G}(m+t\delta m) - \bdmc{G}(m)\right) - D_{\scrH_{\mu}} \bdmc{G}(m)\delta m}_{\bC_n^{-1}} = 0\quad \mu\text{-a.e.} 
    \end{equation*}
    The mapping $D_{\scrH_{\mu}} \bdmc{G}$ is called the stochastic derivative of $\bdmc{G}$.
\end{assumption}
The stochastic G\^ateaux differentiability in \cref{ass:stochastic_derivative} is weaker than the typical G\^ateaux differentiability assumption that requires directional differentiability along the whole parameter space $\scrM$ given in \cref{def:gateaux_derivative}.
\begin{definition}[$\mu$-a.e.\ G\^ateaux differentiability]\label[definition]{def:gateaux_derivative}
    There exists a mapping $D\bdmc{G}:\scrM\to\HS(\scrM,\R^{d_y})$ such that for any $\delta m\in \scrM$, 
    \begin{equation*}
        \lim_{t\to 0}\norm{t^{-1}\left(\bdmc{G}(m+t\delta m) - \bdmc{G}(m)\right) - D\bdmc{G}(m)\delta m} = 0\quad \mu\text{-a.e.}
    \end{equation*}
    The mapping $D\bdmc{G}$ is called the G\^ateaux derivative of $\bdmc{G}$.
\end{definition}
Suppose $D\bdmc{G}$ exists, then the stochastic derivative exists. We have $D\bdmc{G}(m)|_{\scrH_{\mu}} = \sder\bdmc{G}(m)$ $\mu$-a.e. Additionally, the stochastic derivative carries over the parameter regularity given by the prior distribution $\mu$, making it the more natural derivative definition for BIPs; see \cref{sec:stochastic_derivative}. Stochastic differentiability is used our derivative-informed operator learning formulation; see \cref{subsec:h1_definition}. The stochastic derivative is often called the Malliavin derivative \citep{nualart2018introduction} or the H-derivative \citep{kac2002quantum} in different contexts.

\subsection{Local Gaussian approximation of the posterior}
 
We consider a linear expansion of the nonlinear PtO map $\bdmc{G}$ at a given $m\in\scrM$:
\begin{equation*}
    \bdmc{G}(\cdot)\approx \bdmc{G}(m) + \sder \bdmc{G}(m)(\cdot -m)\,.
\end{equation*}
Replacing the PtO map in \cref{eq:bayes_rule_gauss} using the linear expansion, we obtain a local Gaussian approximation to the posterior in closed form~\cite[Section 6.4]{Stuart2010}. It is a conditional probability distribution $\mathcal{Q}_{\text{local}}:\scrM\times\calB(\scrM) \to[0,1]$ given by:
\begin{align}\label{eq:local_gaussian}
        \mu^{\by}\approx\mathcal{Q}_{\text{local}}(m,\cdot)=\mathcal{N}(-\sder\Phi^{\by}(m), \cpo(m))\,,
\end{align}
where the negative mean $\sder\Phi^{\by}:\scrM\to\scrH_{\mu}$ is the $\scrH_{\mu}$-Riesz representation of the stochastic derivative of the data misfit,
\begin{equation}\label{eq:ppg}
    \sder\Phi^{\by}(m) \coloneqq \sder\bdmc{G}(m)^*\left(\bdmc{G}(m)-\by\right)\,,
\end{equation}
and the covariance $\cpo:\scrM\to B_1^+(\scrM)$ is given by
\begin{align}
    \mathcal{C}_{\text{post}}(m) \coloneqq (\mathcal{I}_{\scrH_{\mu}} + \calH(m))^{-1}\cpr\,,\quad\calH(m)\coloneqq \sder \bdmc{G}(m)^*\sder \bdmc{G}(m)\,,\label{eq:ppgh}
\end{align}
where $\mathcal{I}_{\scrH_{\mu}}$ is the identity map on $\scrH_{\mu}$, and $\calH:\scrM\to B_1^{+}(\scrH_{\mu})$ is a Gauss--Newton approximation to the stochastic Hessian of the data misfit in \cref{eq:stochastic_hessian}. The mappings $\sder\Phi^{\by}$ and $\calH$ are often known as the \textit{prior-preconditioned gradient (ppg)} and the \textit{prior-preconditioned Gauss-Newton Hessian (ppGNH)} in the context of inverse problems. We demonstrate their connections to their conventional definitions in \cref{appendix:gaussnewtonhessian}.

\subsection{The Metropolis--Hastings algorithm}\label{subsec:mh}
We use the \textit{Metropolis--Hastings (MH) algorithm} to sample from the posterior distribution $\mu^{\by}$ defined over infinite-dimensional $\mathscr{M}$. The MH algorithm is a procedure for generating reversible Markov chains $\{m_j\in\scrM\}_{j=1}^{\infty}$ with a stationary distribution of $\mu^{\by}$. The algorithm prescribes a Markov chain transition rule (i.e., the law of $m_j\mapsto m_{j+1}$) based a proposal distribution and an accept-reject move. The proposal, denoted by $\mathcal{Q}:\mathscr{M}\times\mathcal{B}(\mathscr{M})\to[0,1]$, is a conditional probability distribution. The proposal and the posterior jointly define a set of transition rates\footnote{These transition rates should not be confused with the Markov chain transition rule of the MH algorithm.} between two positions in $\scrM$ as measures (unnormalized) on the product space $\scrM\times\scrM$:
\begin{equation*}
    \nu(\dd m, \dd m^{\dagger}) \coloneqq \mathcal{Q}(m, \dd m^{\dagger})\mu^{\by}(\dd m)\,,\quad \nu^T(\dd m, \dd m^{\dagger}) \coloneqq \mathcal{Q}(m^{\dagger}, \dd m)\mu^{\by}(\dd m^{\dagger})\,.
\end{equation*}
The ratio of transition rates, $\rho:\mathscr{M}\times\mathscr{M}\to[0, \infty)$, is given by an RN derivative:
\begin{equation}\label{eq:ratio_of_transition_rate}
    \rho(m, m^{\dagger}) \coloneqq \cfrac{\nu^T(\meas m, \meas m^{\dagger})}{\nu(\meas m, \meas m^{\dagger})}=\cfrac{\mathcal{Q}(m^{\dagger}, \dd m)\exp(-\Phi^{\by}(m^{\dagger}))\mu(\dd m^{\dagger})}{\mathcal{Q}(m, \dd m^{\dagger})\exp(-\Phi^{\by}(m))\mu(\dd m)}\,,
\end{equation}
where we use a change of measure in~\eqref{eq:bayes_rule} to represent $\mu^{\by}$. 

The accept-reject move is executed as follows. At a chain position $m_j\in\scrM$, we sample a proposed move $m^{\dagger}\iid\mathcal{Q}(m_j,\cdot)$. The next position is set to $m_{j+1} = m^{\dagger}$, i.e., acceptance, with probability $\alpha(m_j,m^{\dagger})\in[0,1]$ given by
\begin{equation*}
    \alpha(m_j, m^{\dagger}) \coloneqq \min\{1, \rho(m_j, m^{\dagger})\}\,.
\end{equation*}
Alternatively, we set $m_{j+1} = m_j$, i.e., rejection, with probability $1-\alpha(m_{j}, m^{\dagger})$. See \cref{fig:mh} (\textit{left}) for a schematic of the MH algorithm.
\begin{figure}[!h]
    \centering
    \addtolength{\tabcolsep}{-5pt}
    \begin{tabular}{c c}
       \bf The Metropolis--Hastings algorithm  & \bf \makecell{The Metropolis--Hastings algorithm\\ with delayed acceptance}  \\
       \includegraphics[align=t, width = 0.465\linewidth]{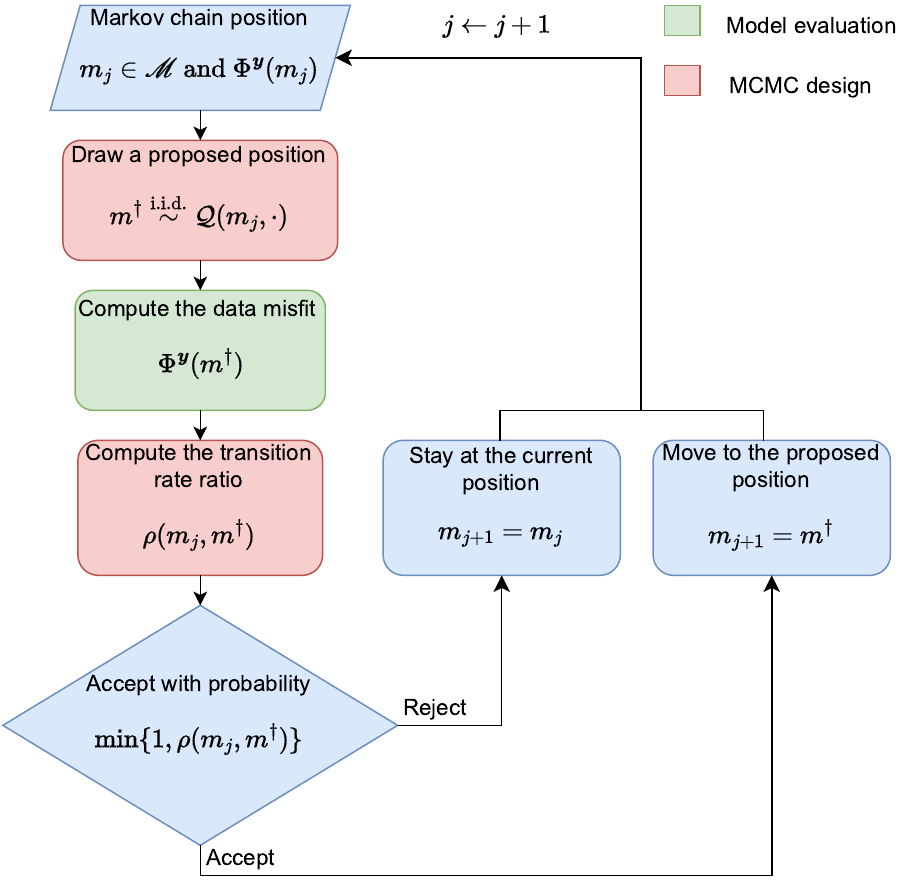}  & \includegraphics[align=t, width=0.525\linewidth]{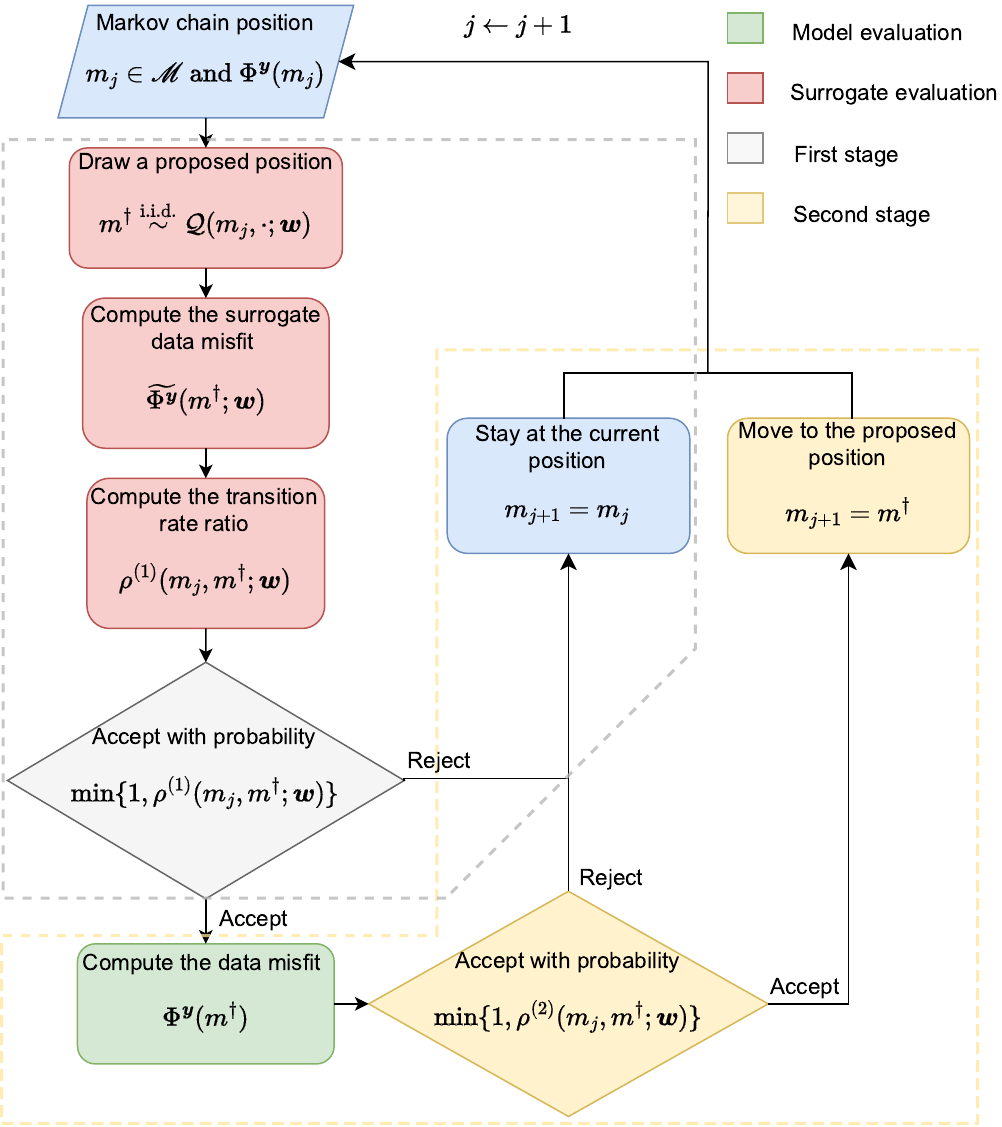}
    \end{tabular}
    \addtolength{\tabcolsep}{5pt}
    \caption{(\textit{left}) A schematic of the MH algorithm for sampling from the posterior distribution $\mu^{\by}$ as described in \cref{subsec:mh}. (\textit{right}) A schematic of the MH algorithm with delayed acceptance enabled by a surrogate PtO map $\widetilde{\bdmc{G}}(\cdot;\bw)$ parameterized by $\bw$. See \cref{subsec:da} for a detailed description of the components of this algorithm.}
    \label{fig:mh}
\end{figure}

\subsection{Dimension-independent MCMC}\label{subsec:pcn}


A dimension-independent MCMC method using the MH algorithm employs a proposal with a well-defined transition rate ratio $\rho$ in~\eqref{eq:ratio_of_transition_rate}. According to the RN theorem \cite[Theorem 2.29]{sullivan2015intro}, $\rho$ is well-defined if $\nu^T\ll\nu$, where $\ll$ denotes the absolute continuity of measures. In particular, \citet{tierney1998note} shows that the MH algorithm rejects all proposed moves if $\nu^T$ and $\nu$ are mutually singular. In finite dimensions, $\nu^T\ll\nu$ holds for most proposal choices (e.g., Gaussian random walk), and one typically expresses $\nu$ and $\nu^T$ as probability densities. However, measures on infinite-dimensional spaces tend to be mutually singular (see, e.g., \citealt[Theorem 2.51]{sullivan2015intro}), and their probability densities do not exist \cite[Theorem 2.38]{sullivan2015intro}. As a result, a finite-dimensional MH algorithm targeting a discretized infinite-dimensional sampling problem often leads to deteriorating sampling performance when the discretization is refined \citep{hairer2014spectral}. For example, the conventional Gaussian random walk proposal given by $\mathcal{Q}_{\text{RW}}(m,\cdot)\coloneqq\mathcal{N}(m, s\cpr)$, $s>0$, fails to be dimension-independent because it leads to an ill-defined $\rho$ in \cref{eq:ratio_of_transition_rate}; see \citealt[Example 5.3]{Stuart2010}, \citealt[Section 2.4]{hairer2014spectral}, and \citealt[Section 3.3]{rudolf2018generalized}.

The building block for a dimension-independent MCMC is the preconditioned Crank--Nicolson (pCN, \citealt{cotter2013mcmc}) proposal, which is reversible with respect to the prior:
\begin{subequations}
\begin{align}
    &\mathcal{Q}_{\text{pCN}}(m,\cdot):=\mathcal{N}(sm, (1-s^2)\calC_{\text{pr}})\,,\quad s\leq1\,, &&& (\text{The pCN proposal})\label{eq:pCN_proposal}\\
    &\mathcal{Q}_{\text{pCN}}(m^{\dagger},\meas m)\mu(\meas m^{\dagger}) = \mathcal{Q}_{\text{pCN}}(m,\meas m^{\dagger})\mu(\meas m)\,. &&& (\text{Prior reversibility})\label{eq:equivalence_pCN}
\end{align}
\end{subequations}
The equivalence of measure in \cref{eq:equivalence_pCN} leads to a well-defined transition rate ratio of the form:
\begin{equation*}
    \rho_{\text{pCN}}(m, m^{\dagger}) = \exp\left(\Phi^{\by}(m)-\Phi^{\by}(m^{\dagger})\right)\,.
\end{equation*}
The pCN proposal is used for deriving the acceptance probability for proposals that include posterior local geometry \citep{beskos2017geometric, rudolf2018generalized, lan2019adaptive}. In particular, these geometry-informed proposals are designed to possess well-defined and close-formed RN derivatives with respect to the pCN proposal. The existence of these RN derivatives leads to dimension-independent geometric MCMC methods. The closed forms of these RN derivatives make evaluations of the acceptance probability straightforward.

\subsection{Geometric MCMC}\label{subsec:mMALA_intro}

We consider the simplified manifold Metropolis-adjusted Langevin algorithm, or mMALA, introduced by \citealt{beskos2017geometric}. It originates from the following Langevin stochastic differential equation (SDE) on $\mathscr{M}$, preconditioned by a position-dependent, positive, and self-adjoint trace class operator $\mathcal{K}:\scrM\to B_1^{+}(\scrM)$:
\begin{equation}\label{eq:langevin}
    \dd M_t = -\frac{1}{2} \mathcal{K}(M_t)\cpr^{-1}\left( M_t + \sder\Phi^{\by}(M_t)\right)\dd t + \mathcal{K}(M_t)^{1/2}\dd W_t\,,\quad t\geq 0\,,
\end{equation}
where $W_t$ is a cylindrical Wiener process on $\mathscr{M}$. This SDE can be derived using a local reference measure of the form:
\begin{equation}\label{eq:local_reference}
    \calQ_{\text{local}}(m,\cdot) = \mathcal{N}(\mathcal{M}(m), \mathcal{K}(m))\,,
\end{equation}
where $\calM:\scrM\to\scrH_{\mu}$ outputs the mean of the reference, which does not appear in the SDE \citep[Section 3.1]{beskos2017geometric}. We assume $\calK(m)$ leads to the equivalence of measure between $\mu$ and $\mathcal{N}(0, \calK(m))$ $\mu$-a.e. Discretizing the above SDE in time using a semi-implicit Euler scheme with a step size $\triangle t\in\R_+$ leads to the following proposal:
\begin{subequations}\label{eq:mmala}
\begin{gather}
    \calQ_{\text{mMALA}}(m,\cdot) \coloneqq \mathcal{N}\left(sm + \left(1-s\right)\mathcal{A}(m), \left(1-s^2\right)\calK(m)\right)\,,\\
    \mathcal{A}(m) \coloneqq m- \calK(m)\cpr^{-1}\left(m + \sder\Phi^{\by}(m)\right)\,,\quad s = \frac{4-\triangle t}{4+\triangle t}\,.
\end{gather}
\end{subequations}
The mMALA and pCN proposals are equivalent in measure $\mu$-a.e.\ \citep[Theorem 3.5]{beskos2017geometric}, and the mMALA proposal has a well-defined transition rate ratio $\rho_{\mmala}:\scrM\times\scrM\to\R_+$ with a closed form given by:
\begin{align*}
    \rho_{\mmala}(m_1, m_2) &\coloneqq \exp\left(\Phi^{\by}(m_1)-\Phi^{\by}(m_2)\right)\frac{\calQ_{\mmala}(m_2,\dd m_1)\mu(\dd m_2)}{\calQ_{\mmala}(m_1,\dd m_2)\mu(\dd m_1)}\\
    & = \exp\left(\Phi^{\by}(m_1)-\Phi^{\by}(m_2)\right)\frac{\rho_0(m_2, m_1)}{\rho_0(m_1, m_2)}\,,
\end{align*}
where $\rho_0:\mathscr{M}\times\mathscr{M}\to\R_+$ is the RN derivative between the mMALA and pCN proposals:
\begin{align}
    \rho_0(m_1, m_2) &\coloneqq \frac{ \calQ_{\mmala}(m_1,\dd m_2)}{\calQ_{\text{pCN}}(m_1,\dd m_2)}\nonumber\\
    & = \exp\left(-\frac{\triangle t}{8}\norm{\mathcal{A}(m_1)}_{\calK(m_1)^{-1}}^2 + \frac{\sqrt{\triangle t}}{2}\left\langle\mathcal{A}(m_1), \widehat{m}\right\rangle_{\calK(m_1)^{-1}}\right)\label{eq:mmala_trr}\\
    &\quad \times {\text{det}_{\scrM}\left(\cpr^{1/2}\calK(m_1)^{-1}\cpr^{1/2}\right)}^{1/2}\exp\left(-\frac{1}{2}\norm{\widehat{m}}^2_{\left(\calK(m_1)^{-1} - \cpr^{-1}\right)}\right)\,,\nonumber
\end{align}
where $\widehat{m} = (m_2 - sm_1)/\sqrt{1-s^2}$ and $\text{det}_\scrM: B_1^{+}(\scrM)\to\R$ is the operator determinant given by eigenvalue product \cite[Theorem 6.1]{gohberg2012traces}.

The mMALA proposal allows flexibility in designing $\calK(m)$ and removing or reducing the ppg $\sder\Phi^{\by}$. For example, the mMALA proposal reduces to the pCN proposal in \eqref{eq:pCN_proposal} when $\calK(m) = \cpr$ and the ppg is removed. We may choose $\mathcal{K}(m)$ such that the local reference measure adapts to the posterior local geometry at each Markov chain position, such as $\mathcal{K}(m) \approx \cpo(m)$ using the local Gaussian approximation in \cref{eq:local_gaussian}. The proposals that incorporate posterior local geometry lead to a class of MCMC methods referred to as \textit{dimension-independent geometric MCMC methods}. 

\subsection{Delayed acceptance MCMC}\label{subsec:da}
The DA MCMC method follows the MH algorithm targeting $\mu^{\by}$ with a special choice of the proposal distribution: the Markov chain transition rule (i.e., the law of $m_j\to m_{j+1}$) of the MH algorithm targeting the surrogate posterior $\widetilde{\mu^{\by}}\in\scrP(\scrM)$ defined by a surrogate PtO map $\widetilde{\bdmc{G}}\approx\bdmc{G}$. The surrogate data misfit and posterior are given by
\begin{subequations}
\begin{align}
    \widetilde{\Phi^{\by}}(m) &\coloneqq \frac{1}{2}\norm{\by-\widetilde{\bdmc{G}}(m)}^2_{\bC_n^{-1}}\,, &&& (\text{Surrogate data misfit}) \label{eq:surrogate_likelihood}\\
    \frac{\widetilde{\mu^{\by}}(\meas m)}{\mu(\meas m)}&\propto \exp(-\widetilde{\Phi^{\by}}(m))\,. &&& (\text{Surrogate posterior}) \label{eq:surrogate_posterior}
\end{align}
\end{subequations}

The DA procedure has two stages at each chain position $m_j$ with a proposed move $m^{\dagger}\iid\mathcal{Q}(m,\cdot)$. 
\begin{enumerate}[leftmargin=*]
    \item A pass-reject move is performed based on the transition rate ratio in \eqref{eq:ratio_of_transition_rate} using surrogate data misfit evaluations:
\begin{equation*}
    \rho^{(1)}(m_j, m^{\dagger}) = \frac{\mathcal{Q}(m^{\dagger},\meas m_j)\exp(-\widetilde{\Phi^{\by}}(m^{\dagger}))\mu(\meas m^{\dagger})}{\mathcal{Q}(m_j,\meas m^{\dagger})\exp(-\widetilde{\Phi^{\by}}(m_j))\mu(\meas m_j)}\,.
\end{equation*}
The proposed move $m^{\dagger}$ is passed to the second stage with probability $\alpha^{(1)}(m_j, m^{\dagger})\coloneqq\min\{1, \rho^{(1)}(m_j, m^{\dagger})\}$. Otherwise, the second stage is skipped and set $m_{j+1} = m_j$ (i.e., rejection) with probability $1-\alpha^{(1)}(m_j, m^{\dagger})$.
    \item An accept-reject move of the MH algorithm is performed based on a proposal $\mathcal{Q}_{\text{DA}}(m, \meas m^{\dagger})$ prescribed by the first stage pass-reject move:
    \begin{equation*}
        \mathcal{Q}_{\text{DA}}(m,\meas m^{\dagger}) = \alpha^{(1)}(m_j, m^{\dagger})\mathcal{Q}(m, \meas m^{\dagger}) + \left(1-\alpha^{(1)}(m_j, m^{\dagger})\right)\delta_{m_j}(\meas m^{\dagger})\,,
    \end{equation*}
    where $\delta_{m_j}$ is the Dirac mass concentrated on $m_j$. Since $\mathcal{Q}_{\text{DA}}$ is reversible with respect to $\widetilde{\mu^{\by}}$, we have $\mathcal{Q}_{\text{DA}}(m,\meas m^{\dagger})\widetilde{\mu^{\by}}(\meas m) = \mathcal{Q}_{\text{DA}}(m^{\dagger},\meas m)\widetilde{\mu^{\by}}(\meas m^{\dagger})$.
    As a result, the transition rate ratio for $\mathcal{Q}_{\text{DA}}$ is given by
    \begin{equation}\label{eq:da_second}
    \rho^{(2)}(m_j, m^{\dagger}) = \frac{\exp(-\widetilde{\Phi^{\by}}(m_j))\exp(-\Phi^{\by}(m^{\dagger}))}{\exp(-\widetilde{\Phi^{\by}}(m^{\dagger}))\exp(-\Phi^{\by}(m_j))}\,.
\end{equation}
\end{enumerate}
See \cref{fig:mh} (\text{right}) for a schematic of DA MCMC.

The DA procedure allows proposed moves to be rejected in the first stage solely based on surrogate evaluations. This feature potentially leads to a significant reduction in the evaluation counts of the true PtO map during posterior sampling. On the other hand, the efficiency of DA MCMC relies heavily on the quality of the surrogate approximation. When the surrogate PtO map is accurate, most rejections occur during the first stage without model solutions, and most proposed moves passed to the second stage are accepted. Higher surrogate approximation error leads to more frequent second-stage rejection, thus increasing the average computational cost per Markov chain sample and deteriorating posterior sampling efficiency. See \cref{app:da_and_error} for additional discussion on surrogate approximation in DA MCMC.

\section{Operator learning in $H^1_\mu$ Sobolev space with Gaussian measure}\label{sec:operator_learning}

We consider an operator learning problem of optimizing the weight $\bw\in\R^{d_w}$ of an operator surrogate $\widetilde{\bdmc{G}}(\cdot;\bw):\mathscr{M}\to\scrY$ so that $\widetilde{\bdmc{G}}$ is close to the PtO map $\bdmc{G}$ measured by certain metric. When the operator surrogate $\widetilde{\bdmc{G}}$ is represented using a neural network, the weight $\bw$ consists of the tunable parameters of neural networks. In \cref{subsec:l2_training}, we introduce the conventional operator learning method. We present our derivative-informed operator learning method in \cref{subsec:h1_definition}. In \cref{subsec:hs_matrix}, we discuss matrix representations of Hilbert--Schmidt operators for efficient operator learning.

\subsection{Operator learning in $L^2_\mu$ Bochner space}\label{subsec:l2_training}

The typical operator learning method approximates the PtO map in the $L^2_{\mu}(\scrM;\scrY)$ Bochner space, or $L^2_{\mu}$ for short. It is defined by:
\begin{align*}
    L^2_{\mu}(\scrM;\mathscr{Y}) &\coloneqq \left\{\bdmc{T}:\scrM\to \mathscr{Y}\;\Big\vert\;\norm{\bdmc{T}}_{L^2_{\mu}}<\infty\right\}\,, &&& (L^2_{\mu}\text{ Definition})\\
    \norm{\bdmc{T}}_{L^2_{\mu}(\scrM;\mathscr{Y})} &\coloneqq \left(\mathbb{E}_{M\sim\mu}\left[\norm{\bdmc{T}(M)}_{\bC_n^{-1}}^2\right]\right)^{1/2}\,. &&& (L^2_{\mu}\text{ norm})
\end{align*}
The operator learning objective function is designed to control the approximation error in $L^2_{\mu}(\scrM;\scrY)$:
\begin{subequations}\label{eq:loss_l2}
\begin{align}
    &\bw^{\ddagger} = \argmin_{\bw\in\R^{d_w}} \calL^{\infty}_{L^2_\mu}(\bw)\,, &&&(\text{Operator learning objective})\\
    &\calL^{\infty}_{L^2_\mu}(\bw)\coloneqq\frac{1}{2}\norm{\bdmc{G}-\widetilde{\bdmc{G}}(\cdot;\bw)}_{L^2_{\mu}(\scrM;\mathscr{Y})}^2\,. &&& (\text{Error control in }L^2_{\mu}(\scrM;\scrY))
\end{align}
\end{subequations}
The objective $\mathcal{L}^{\infty}_{L^2_{\mu}}$ can be estimated via input--output pairs $\{m_j, \bdmc{G}(m_j)\}_{j=1}^{n_t}$ with $m_j\iid\mu$, which leads to a loss function $\calL^{n_t}_{L^2_{\mu}}$ defined as follows:
\begin{align}\label{eq:empirical_loss_l2}
    &\calL^{\infty}_{L^2_{\mu}}(\bw)\approx \calL^{n_t}_{L^2_{\mu}}(\bw;\{m_j\}_{j=1}^{n_t})\coloneqq \frac{1}{2n_t}\sum_{j=1}^{n_t}\norm{\bdmc{G}(m_j)-\widetilde{\bdmc{G}}(m_j;\bw)}^2_{\bC_n^{-1}}\,. &&
\end{align}
The operator surrogate can be constructed via finding $\bw^{\ddagger}$ that minimizes the loss $\calL^{n_t}_{L^2_{\mu}}$. 

\subsection{Operator learning in $H^1_{\mu}$ Sobolev space with Gaussian measure}\label{subsec:h1_definition}

In this work, we are interested in designing MCMC methods using operator surrogate that requires small approximation errors in both operator evaluations (for approximating ppg in \cref{eq:ppg} and efficient DA procedure) and its derivative evaluations (for approximating the ppg and ppGNH in \cref{eq:ppg,eq:ppgh}). Therefore, we consider controlling the operator surrogate error in the \textit{$H^1$ Sobolev space with Gaussian measure}, or $H^1_{\mu}$ for short. It is a Hilbert space of nonlinear mappings with an inner product-induced norm that measures the distance between nonlinear mappings using \textit{the discrepancy in their stochastic derivative evaluations} in addition to the discrepancy in the mappings:
\begin{align*}
    &H^1_{\mu}(\scrM;\mathscr{Y}) \coloneqq \left\{\bdmc{T}\in L^2_{\mu}(\scrM;\mathscr{Y})\;\Big\vert\;\norm{\sder\bdmc{T}(M)}_{L^2_{\mu}\left(\scrM; \HS(\scrH_{\mu}, \mathscr{Y})\right)}<\infty\right\}\,, &&& ( H^1_{\mu}\text{ definition})\\
    &\norm{\bdmc{T}}_{H^1_{\mu}(\scrM;\mathscr{Y})}\coloneqq \left( \norm{\bdmc{T}}_{L^2_{\mu}(\scrM;\mathscr{Y})}^2 + \norm{\sder\bdmc{T}}_{L^2_{\mu}\left(\scrM; \HS(\scrH_{\mu}, \mathscr{Y})\right)}^2 \right)^{1/2}\,,&&& (H^1_{\mu}\text{ norm})\\
    &\norm{\sder\bdmc{T}}_{L^2_{\mu}\left(\scrM; \HS(\scrH_{\mu}, \mathscr{Y})\right)} \coloneqq \left(\mathbb{E}_{M\sim\mu}\left[\norm{\sder\bdmc{T}(M)}^2_{\HS(\scrH_{\mu},\mathscr{Y})}\right]\right)^{1/2}\,. &&& (\text{Semi-norm})
\end{align*}
See \citet{bogachev1998gaussian} and references therein for a detailed discussion on the definition and properties of $H^1_{\mu}(\scrM;\scrY)$. The following logarithmic Sobolev (\Cref{thm:log-sobolev}) and Poincar\'e (\Cref{thm:poincare}) inequalities hold on $H^1_{\mu}(\scrM;\scrY)$, which are essential for establishing approximation error bounds on operator surrogate and Bayesian inversion. In particular, we have a Poincar\'e constant of $1$ on $H^1_{\mu}(\scrM;\scrY)$.
\begin{theorem}[Logarithmic Sobolev inequality, {\citealt[5.5.1]{bogachev1998gaussian}}]\label[theorem]{thm:log-sobolev}
If $\calS \in H^1_{\mu}(\scrM)\coloneqq H^1_{\mu}(\scrM;\R)$, then the following inequality holds
\begin{align*}
    \mathbb{E}_{M\sim\mu}\left[\calS(M)^2\ln\left(|\calS(M)|\right)\right]&\leq \mathbb{E}_{M\sim\mu}\left[\norm{\sder\calS(M)}^2_{\scrH_{\mu}}\right]\\
    &\quad + \frac{1}{2}\mathbb{E}_{M\sim\mu}\left[\calS(M)^2\right]\ln\left(\mathbb{E}_{M\sim\mu}\left[\ln\left(\calS(M)^2\right)\right]\right)\,.
\end{align*}
where $\sder\calS$ is the $\scrH_{\mu}$-Riesz representation of the stochastic derivative of $\calS$.
\end{theorem}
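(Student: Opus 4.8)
The plan is to prove the inequality by reducing the infinite-dimensional Gaussian statement to the classical finite-dimensional Gaussian logarithmic Sobolev inequality and then passing to the limit. It is cleanest to target the sharp \emph{entropy form}: writing $\operatorname{Ent}_\mu(g) \coloneqq \mathbb{E}_\mu[g\ln g] - \mathbb{E}_\mu[g]\ln\mathbb{E}_\mu[g]$, the stated inequality is equivalent, after expanding $\operatorname{Ent}_\mu(\calS^2)=\mathbb{E}_\mu[\calS^2\ln(\calS^2)]-\mathbb{E}_\mu[\calS^2]\ln\mathbb{E}_\mu[\calS^2]$ and dividing by two, to $\operatorname{Ent}_\mu(\calS^2)\le 2\,\mathbb{E}_\mu[\norm{\sder\calS}^2_{\scrH_\mu}]$. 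Fix the Karhunen--Lo\`eve basis $\{\phi_k\}_{k\ge1}$ of orthonormal eigenfunctions of $\cpr$ with $\cpr\phi_k=\lambda_k\phi_k$, so that writing $m=\sum_k\sqrt{\lambda_k}\,\xi_k\phi_k$ gives coordinates $\xi_k\iid\calN(0,1)$ under $\mu$ and turns the Cameron--Martin inner product $\langle\cdot,\cdot\rangle_{\cpr^{-1}}$ into the Euclidean one. Under this identification a smooth cylindrical functional $\calS(m)=g(\xi_1,\dots,\xi_n)$ has $\norm{\sder\calS}_{\scrH_\mu}=|\nabla g|$ evaluated on the standard Gaussian $\gamma_n$. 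Since smooth cylindrical functionals are dense in $H^1_\mu(\scrM)$ and both sides of the inequality are controlled along $H^1_\mu$-convergent sequences, it suffices to prove the finite-dimensional bound $\operatorname{Ent}_{\gamma_n}(g^2)\le 2\,\mathbb{E}_{\gamma_n}[|\nabla g|^2]$ with a dimension-free constant.

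For the finite-dimensional base case I would use the Ornstein--Uhlenbeck semigroup $\{P_t\}_{t\ge0}$ with invariant measure $\gamma_n$, given by Mehler's formula. The key structural input is the commutation estimate $\nabla P_t g = e^{-t}P_t(\nabla g)$, which encodes the $\mathrm{CD}(1,\infty)$ curvature of the Gaussian. Setting $h=g^2$ and differentiating the entropy along the flow, $\frac{d}{dt}\operatorname{Ent}_{\gamma_n}(P_t h)$, an integration by parts against the generator produces a Fisher-information dissipation term which, combined with the commutation estimate, is bounded by $e^{-2t}$ times a quantity comparable to the Dirichlet energy. Integrating this de Bruijn-type identity from $t=0$ to $t=\infty$, where $P_t h\to\mathbb{E}_{\gamma_n}[h]$ forces the entropy to vanish, yields $\operatorname{Ent}_{\gamma_n}(g^2)\le 2\,\mathbb{E}_{\gamma_n}[|\nabla g|^2]$ with the sharp constant. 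An equivalent route is tensorization: prove the one-dimensional inequality (via the two-point inequality and the central limit theorem, or the same semigroup argument at $n=1$) and then use subadditivity of entropy together with additivity of the Dirichlet energy across independent coordinates, which preserves the constant dimension-free.

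To return to infinite dimensions I would take $\calS_n\coloneqq\mathbb{E}_\mu[\calS\mid\calF_n]$ with $\calF_n\coloneqq\sigma(\xi_1,\dots,\xi_n)$. These are cylindrical and satisfy the finite-dimensional inequality, and by martingale convergence together with closability of the Malliavin derivative one has $\calS_n\to\calS$ in $L^2_\mu$ and $\sder\calS_n\to\sder\calS$ in $L^2_\mu(\scrM;\scrH_\mu)$. The Dirichlet-energy term then converges directly from the $L^2_\mu$ convergence of the derivatives, while the entropy term is passed to the limit by lower semicontinuity: since $x\mapsto x\ln x$ is bounded below and convex, Fatou's lemma (after splitting into the regions where $\calS_n^2$ is large and small) transfers the entropy bound without loss, giving the stated inequality for every $\calS\in H^1_\mu(\scrM)$.

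The main obstacle is precisely this last step for the entropy functional: $\calS\mapsto\mathbb{E}_\mu[\calS^2\ln|\calS|]$ is neither bounded nor continuous on $H^1_\mu$, so one must supply uniform integrability --- e.g.\ a de la Vall\'ee--Poussin or truncation argument exploiting that the finite-dimensional inequality already bounds the entropy of the approximants --- to guarantee both that the limiting entropy is finite and that none is lost in the limit. A secondary but essential bookkeeping point is verifying the isometry claim that the ordinary gradient in the standard-Gaussian coordinates $\xi_k$ genuinely represents $\sder$ in the $\cpr^{-1}$-weighted Cameron--Martin norm, so that the sharp constant transfers unchanged from the finite-dimensional computation to the infinite-dimensional statement.
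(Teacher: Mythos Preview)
The paper does not supply its own proof of this theorem; it simply cites \cite[5.5.1]{bogachev1998gaussian} and uses the inequality as a black box. So there is no ``paper's proof'' to compare against, and your proposal is being measured against the standard literature argument rather than anything in the text.

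Your outline is the standard and correct route: reduce to cylindrical functions via the Karhunen--Lo\`eve coordinates, invoke the finite-dimensional Gaussian log-Sobolev inequality with the sharp constant $2$ (whether via the Ornstein--Uhlenbeck semigroup and the commutation $\nabla P_t = e^{-t}P_t\nabla$, or via tensorization from the one-dimensional case), and then pass to the limit using closability of the stochastic derivative together with lower semicontinuity of the entropy. You correctly flag the only genuinely delicate point, namely that $\calS\mapsto\mathbb{E}_\mu[\calS^2\ln|\calS|]$ is not $L^2$-continuous, so the passage to the limit requires either lower semicontinuity of entropy plus a uniform bound on the approximating entropies (which the finite-dimensional inequality itself supplies), or a direct truncation argument. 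Your identification of the Cameron--Martin norm with the Euclidean gradient norm in the $\xi$-coordinates is also correct and is exactly what makes the constant transfer without loss. This is essentially Bogachev's own argument.

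One remark on the statement itself: as written in the paper the final term reads $\tfrac{1}{2}\mathbb{E}_\mu[\calS^2]\ln\bigl(\mathbb{E}_\mu[\ln(\calS^2)]\bigr)$, which is almost certainly a typo for $\tfrac{1}{2}\mathbb{E}_\mu[\calS^2]\ln\bigl(\mathbb{E}_\mu[\calS^2]\bigr)$. Your reformulation in entropy form, $\operatorname{Ent}_\mu(\calS^2)\le 2\,\mathbb{E}_\mu\bigl[\norm{\sder\calS}^2_{\scrH_\mu}\bigr]$, is the correct target and matches the cited reference.
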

\begin{theorem}[Poincar\'e inequality, {\citealt[5.5.6]{bogachev1998gaussian}}]\label[theorem]{thm:poincare}
    If $\bdmc{T}\in H^1_{\mu}(\scrM;\scrY)$, then
    \begin{equation*}
        \norm{\bdmc{T} - \mathbb{E}_{M\sim\mu}[\bdmc{T}(M)]}^2_{L^2_{\mu}(\scrM;\mathscr{Y})}  \leq \norm{\sder\bdmc{T}}^2_{L^2_{\mu}\left(\scrM; \HS(\scrH_{\mu}, \mathscr{Y})\right)}\,.
    \end{equation*}
\end{theorem}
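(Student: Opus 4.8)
The plan is to reduce the $\scrY$-valued statement to the scalar Gaussian Poincar\'e inequality (the case $\scrY=\R$ of \Cref{thm:poincare}, i.e.\ \cite[5.5.6]{bogachev1998gaussian}) by exploiting that the observable space $\scrY\cong\R^{d_y}$ is finite-dimensional. First I would fix a $\bC_n^{-1}$-orthonormal basis $\{e_i\}_{i=1}^{d_y}$ of $\scrY$ and expand $\bdmc{T}(m)=\sum_{i=1}^{d_y}T_i(m)e_i$ in terms of the scalar coordinate functions $T_i(m)\coloneqq\langle \bdmc{T}(m),e_i\rangle_{\bC_n^{-1}}$. Each $T_i$ is the composition of $\bdmc{T}$ with the bounded linear functional $\langle\,\cdot\,,e_i\rangle_{\bC_n^{-1}}$, so by linearity of the stochastic derivative $\sder T_i(m)=\langle \sder\bdmc{T}(m)\,\cdot\,,e_i\rangle_{\bC_n^{-1}}$, and I would record that each $T_i\in H^1_{\mu}(\scrM)$ once the derivative term below is shown finite.

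Next I would show that both sides of the claimed inequality split additively over this basis. Orthonormality of $\{e_i\}$ in $\scrY$ gives, pointwise, $\norm{\bdmc{T}(m)-\mathbb{E}_{M\sim\mu}[\bdmc{T}(M)]}_{\bC_n^{-1}}^2=\sum_{i=1}^{d_y}\bigl(T_i(m)-\mathbb{E}_{M\sim\mu}[T_i(M)]\bigr)^2$, so after taking $\mathbb{E}_{M\sim\mu}$ the left-hand side equals $\sum_i\mathrm{Var}_\mu(T_i)$. For the derivative term, fixing an orthonormal basis $\{\phi_k\}$ of $\scrH_\mu$ and applying Parseval to the Hilbert--Schmidt norm yields, for each fixed $m$,
\begin{equation*}
    \norm{\sder\bdmc{T}(m)}_{\HS(\scrH_\mu,\scrY)}^2 = \sum_k\norm{\sder\bdmc{T}(m)\phi_k}_{\bC_n^{-1}}^2 = \sum_{i=1}^{d_y}\sum_k\bigl(\sder T_i(m)\phi_k\bigr)^2 = \sum_{i=1}^{d_y}\norm{\sder T_i(m)}_{\scrH_\mu}^2\,,
\end{equation*}
where the last equality identifies the functional $\sder T_i(m)\in\HS(\scrH_\mu,\R)$ with its $\scrH_\mu$-Riesz representation. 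Taking $\mathbb{E}_{M\sim\mu}$ shows the right-hand side equals $\sum_i\mathbb{E}_{M\sim\mu}[\norm{\sder T_i(M)}_{\scrH_\mu}^2]$, which in particular is finite because $\bdmc{T}\in H^1_\mu(\scrM;\scrY)$, confirming $T_i\in H^1_\mu(\scrM)$.

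Finally I would apply the scalar Gaussian Poincar\'e inequality $\mathrm{Var}_\mu(T_i)\leq\mathbb{E}_{M\sim\mu}[\norm{\sder T_i(M)}_{\scrH_\mu}^2]$ to each coordinate and sum over $i=1,\dots,d_y$; the two additive decompositions above then combine to give exactly the stated inequality, and since the scalar constant is $1$ it is preserved under summation, yielding Poincar\'e constant $1$ in the vector-valued case. If one prefers to derive the scalar inequality in-house rather than cite it, it follows from the logarithmic Sobolev inequality (\Cref{thm:log-sobolev}) by linearizing around the constant function, i.e.\ applying it to $1+\epsilon(T_i-\mathbb{E}_{M\sim\mu}[T_i(M)])$ and collecting the $O(\epsilon^2)$ terms.

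The main obstacle I anticipate is the bookkeeping in the Hilbert--Schmidt decomposition rather than any deep analysis: one must justify that the stochastic derivative of the scalar projection $T_i$ is the corresponding component of $\sder\bdmc{T}$ (linearity and $\mu$-measurability of $\sder$ under \Cref{ass:stochastic_derivative}) and that the interchanges of summation with $\mathbb{E}_{M\sim\mu}$ are legitimate, which here is immediate from Tonelli's theorem since all integrands are nonnegative. With these in hand, the finite dimension of $\scrY$ genuinely reduces the claim to $d_y$ independent scalar Poincar\'e inequalities.
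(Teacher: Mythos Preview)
Your proposal is correct. The paper does not actually supply a proof of this theorem; it states the result with a direct citation to \cite[5.5.6]{bogachev1998gaussian} and then \emph{uses} it (notably in the proofs of \cref{prop:dis_error,prop:kle_error}). So there is no paper-side argument to compare against beyond the reference itself.

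Your route---fix a $\scrY$-orthonormal basis, decompose $\bdmc{T}$ into scalar coordinate functions $T_i$, observe that both the $L^2_\mu$ variance and the $L^2_\mu(\HS)$ derivative norm split additively over $i$, and then invoke the scalar Gaussian Poincar\'e inequality componentwise---is exactly the standard reduction and is sound. The only points worth tightening are the ones you already flagged: that $\sder$ commutes with the bounded linear projection $\langle\,\cdot\,,e_i\rangle_{\bC_n^{-1}}$ (immediate from \cref{ass:stochastic_derivative} and linearity of the limit), and that the interchanges of sums and expectations are covered by Tonelli. Your aside about recovering the scalar inequality from \cref{thm:log-sobolev} via linearization is also correct, though in the paper's logic both \cref{thm:log-sobolev} and \cref{thm:poincare} are simply imported from Bogachev rather than derived one from the other.
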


The operator learning problem with error control in $H^1_{\mu}(\scrM;\scrY)$ is formulated as
\begin{subequations}\label{eq:loss_h1}
\begin{align}
    &\bw^{\ddagger}=\argmin_{\bw\in\R^{d_w}} \calL_{H^1_{\mu}}^{\infty}(\bw)\,, &&& (\text{Operator learning objective})\\
    &\calL_{H^1_\mu}^{\infty}\coloneqq\frac{1}{2}\norm{\bdmc{G}-\widetilde{\bdmc{G}}(\cdot;\bw)}_{H^1_{\mu}(\scrM;\mathscr{Y})}^2\,. &&& (\text{Error control in }H^1_{\mu}(\scrM;\scrY))
\end{align}
\end{subequations}
The operator learning objective $\calL_{H^1_\mu}^{\infty}$ can be estimated via joint samples of the operator evaluations and stochastic derivative evaluations $\{$ $m_j\iid \mu$, $\bdmc{G}(m_j)$, $\sder\bdmc{G}(m_j)\}_{j=1}^{n_t}$, which leads to a loss function $\calL_{H^1_\mu}^{n_t}$ defined as follows:
\begin{equation}\label{eq:empirical_loss_h1}
\begin{aligned}
    \calL_{H^1_\mu}^{\infty}(\bw)\approx \calL_{H^1_\mu}^{n_t}(\bw;\{m_j\}_{j=1}^{n_t}) &\coloneqq \frac{1}{2n_t}\sum_{j=1}^{n_t}\bigg(\norm{\bdmc{G}(m_j) - \widetilde{\bdmc{G}}(m_j;\bw)}_{\bC_n^{-1}}^2\\
    &\quad + \norm{\sder\bdmc{G}(m_j)-\sder\widetilde{\bdmc{G}}(m_j;\bw)}_{\HS(\scrH_{\mu},\scrY)}^2\bigg)\,.
\end{aligned}
\end{equation}
In the context of neural network-based operator learning, we refer to the resulting operator surrogates as \textit{derivative-informed neural operators} (DINOs).

\subsection{Matrix representations of Hilbert--Schmidt operators}\label{subsec:hs_matrix}
We consider a matrix representation of the stochastic derivative for the purpose of generating training samples. For an arbitrary pair of orthonormal basis (ONB) on the parameter and observable CM spaces
\begin{equation*}
        \scrH_{\mu}\text{-ONB }:\quad\{\psi_k\}_{k=1}^{\infty}\,,\quad
        \scrY\text{-ONB }:\quad\{\bv_j\}_{j=1}^{d_y}\,,
\end{equation*}
we define a \textit{Jacobian}, denoted by $\bJ:\scrM\to\in\HS(l^2,\R^{d_y})$ where $l^2$ denotes the Hilbert space of squared-summable sequences, using an isometric isomorphism between $\HS(l^2,\R^{d_y})$ and $\HS(\scrH_{\mu},\scrY)$ defined by the bases:
\begin{subequations}\label{eq:jacobian_def}
\begin{align}
    \left(\bJ(m)\right)_{jk} &\coloneqq \bv_j^T\bC_n^{-1} \sder\bdmc{G}(m)\psi_k\,, &&&(\text{Bijective linear mapping})\label{eq:jacobian_components}\\
    \left(\bJ(m)^T\right)_{jk} &\coloneqq \left\langle\psi_j,\sder\bdmc{G}(m)^*\bv_k\right\rangle_{\cpr^{-1}}\,,&&&(\text{Jacobian matrix transpose})\label{eq:jacobian_transp} \\
    \norm{\bJ(m)}_{F} &=\norm{\sder\bdmc{G}(m)}_{\HS(\scrH_{\mu};\scrY)}\,, &&& (\text{Isometry})\label{eq:isometry}
\end{align}
\end{subequations}
where $\norm{\cdot}_F$ is the Frobenius norm. The Frobenius and HS inner products are the same on $\HS(l^2, \R^{d_y})$, and $\bJ(m)$ can be interpreted as a matrix via its components defined in \cref{eq:jacobian_components,eq:jacobian_transp}. Importantly, the isometry in \cref{eq:isometry} is independent of the choice of basis, while the mapping between $\bJ$ and $\sder\bdmc{G}$ in \cref{eq:jacobian_components} depends on the choice of basis.

With the matrix representation of the stochastic derivative, training samples can be generated as Jacobian matrices (i.e., Jacobian evaluations) at each parameter sample. Furthermore, we can estimate the derivative approximation error at each parameter sample via the Frobenius norm of the error in the Jacobian matrices. However, the size of Jacobian matrices can be problematic in numerical computation. Assume that numerical computation is performed in a discretized parameter space $\scrM^h\subset\scrM$ using the Galerkin method, where $\scrM^h$ is isomorphic to $\R^{d_m}$. Then, the discretized Jacobian $\bJ^h$ outputs $\R^{d_y\times d_m}$ matrices, i.e., $\bJ^h:\scrM^h\to \R^{d_y\times d_m}$; thus, storing and learning the Jacobian matrices generated at a large number of parameter samples can be intractable for large-scale problems. As a result, dimension reduction of the parameter space is essential for derivative-informed $H^1_{\mu}$ operator learning.

\cite{oleary2024derivative} argue that restricting the derivative-informed operator learning using a pre-determined rank-$r$ reduced basis $\{\psi_j\}_{j=1}^{r}$ with $r\ll d_m$ leads to tractable and accurate learning of the derivative for a wide-range of PDE models. We adopt this strategy in this work. The following section describes DINO with error control in $H^1_{\mu}$ that extends reduced basis derivative-informed operator learning to our setting.
\section{Reduced basis derivative-informed neural operator}\label{sec:dino}
Assume we have a set of reduced $\scrH_{\mu}$-ONBs of rank $r$ denoted by $\{\psi_j\}_{j=1}^{r}$. They define a pair of linear encoders $\Psi_r^*\in \HS(\scrH_{\mu},\R^r)$ and decoders $\Psi_r\in \HS(\R^r, \scrH_{\mu})$ on $\scrH_{\mu}$ with $\Psi_r^*\Psi_r = \bI_r\in\R^{r\times r}$, where $\bI_r$ is the identity matrix:
\begin{subequations}\label{eq:projector}
\begin{align}
    \Psi_{r}^*&:\scrH_{\mu}\ni m\mapsto \sum_{j=1}^{r} \left\langle m, \psi_j\right\rangle_{\cpr^{-1}}\be_j\in\R^{r}\,, &&& (\text{Parameter encoder})\\
    \Psi_{r}&:\R^{r}\ni \bmm_r\mapsto\sum_{j=1}^{r}(\bmm_r)_j\psi_j\in\scrH_{\mu}\,, &&& (\text{Parameter decoder})
\end{align}
\end{subequations}
where $\Psi_r^*$ is the adjoint of $\Psi_r$ and $\be_j$ is the unit vector along the $j$th coordinate. Using the matrix representation of the HS operator introduced in \cref{subsec:hs_matrix}, the linear encoder and decoder may be represented as 
\begin{equation*}
    \Psi_r = \begin{bmatrix}
        \vertbar & \vertbar & & \vertbar\\
        \psi_1 & \psi_2 &\cdots & \psi_r\\
        \vertbar & \vertbar & & \vertbar
    \end{bmatrix}\,, \quad \Psi_r^* = \begin{bmatrix}
        
        \horzbar& \left\langle\psi_1,\cdot\right\rangle_{\cpr^{-1}} & \horzbar \\
        \horzbar & \left\langle\psi_2,\cdot\right\rangle_{\cpr^{-1}} & \horzbar \\
        & \vdots &\\
        \horzbar & \left\langle\psi_r,\cdot\right\rangle_{\cpr^{-1}} & \horzbar
    \end{bmatrix}\,.
\end{equation*}
We extend the range and domain of the encoder and decoder from $\scrH_{\mu}$ to $\scrM$ and define a projection $\calP_r$ on $\scrM$ as follows:
\begin{equation}\label{eq:projection}
    \mathcal{P}_r\coloneqq\Psi_r\Psi_r^*:\scrM\to \text{span}(\{\psi_j\}_{j=1}^r)\,.
\end{equation}
We emphasize that the $\scrM$-adjoint of $\Psi_r$ is $\Psi^*_r\cpr$, and the $\scrM$-adjoint of $\Psi_r^*$ is $\cpr^{-1}\Psi_r$. Similarly, let $\bV\in\HS(\R^{d_y},\scrY)$ be a matrix with columns consist of $\scrY$-ONB vectors $\{\bv_j\}_{j=1}^{d_y}$:
\begin{align*}
    \bV = \begin{bmatrix}
        \vertbar & \vertbar & & \vertbar\\
        \bv_1 & \bv_2 &\cdots & \bv_{d_y}\\
        \vertbar & \vertbar & & \vertbar
    \end{bmatrix}\,. &&&(\text{Observable basis})
\end{align*}
We note that $\bV^* = \bV^T\bC_n^{-1}$, where $\bV^T$ is the matrix transpose of $\bV$. 

We parameterize the operator surrogate using a neural network $\nn:\R^r\times\R^{d_w}\to\R^{d_y}$:
\begin{align}\label{eq:rb_dino}
    \widetilde{\bdmc{G}}(m;\bw) &\coloneqq \bV\nn(\Psi_r^* m, \bw)\,. &&& (\text{Reduced basis neural operator})
\end{align}
The neural network represents the nonlinear mapping from the coefficients of reduced $\scrH_{\mu}$-ONBs $\{\psi_j\}_{j=1}^r$ to the coefficients of $\scrY$-ONBs $\{\bv_j\}_{j=1}^{d_y}$.
\begin{remark}
    In this work, we restrict our attention to parameter dimension reduction only. We acknowledge that reducing the observables is essential for many BIPs, such as those with high-resolution image data and time-evolving data. Recent work by \cite{baptista2022gradientbased} studies optimal joint parameter and data dimension reduction in the context of BIPs based on logarithmic Sobolev inequality, which can be readily applied to our setting due to \Cref{thm:log-sobolev}. However, there are many more practical considerations when jointly reducing the input and output dimensions for efficient DINO training. For this reason, we reserve dimension reduction of the observables for future work.
\end{remark}

The stochastic derivative of the operator surrogate and its adjoint can be expressed using the surrogate reduced Jacobian $\widetilde{\bJ_r}(\cdot;\bw):\scrM\to\R^{d_y\times r}$ through the neural network Jacobian $\partial_{\bmm_r} \bff_{\text{NN}}(\cdot, \bw):\R^{r}\to\R^{d_y\times r}$:
\begin{align*}
    \widetilde{\bJ_r}(m;\bw) &\coloneqq \partial_{\bmm_r} \bff_{\text{NN}}(\Psi_r^*m, \bw)\,,\quad \begin{cases}
    \sder\widetilde{\bdmc{G}}(m;\bw) = \bV\widetilde{\bJ_r}(m;\bw)\Psi_r^*\,,\\
    \sder\widetilde{\bdmc{G}}(m;\bw)^* = \Psi_r\widetilde{\bJ_r}(m;\bw)^T\bV^*\,.      
    \end{cases}
\end{align*}
Using the reduced basis architecture in \cref{eq:rb_dino} with $\scrH_{\mu}$ and $\scrY$-ONBs and the isometric isomorphism in \cref{eq:jacobian_components}, the derivative-informed $H^1_{\mu}$ operator learning objective in \cref{eq:loss_h1} can be reduced as follows:
\begin{align*}
\begin{split}
    \calL_{H^1_\mu}^{\infty}(\bw)
    &\propto\frac{1}{2}\mathbb{E}_{M\sim\mu}\bigg[\Big\lVert\underbrace{\bV^*\bdmc{G}(M) - \nn(\Psi_r^* M, \bw)}_{\displaystyle\in\R^{d_y}}\Big\rVert^2+\\
    &\quad\Big\lVert\underbrace{\bV^*\sder \bdmc{G}(M)\Psi_r - \partial_{\bmm_r}\nn(\Psi_r^*M, \bw)}_{\displaystyle\bJ_r(M)-\widetilde{\bJ_r}(M;\bw)\in\R^{d_y\times r}}\Big\rVert^2_{F}\bigg]\,,
\end{split}
\end{align*}
where constant terms independent of $\bw$ are eliminated and the reduced Jacobian $\bJ_r:\scrM\to\R^{d_y\times r}$ of the PtO map is given by 
\begin{align*}
    \bJ_r(m) &\coloneqq \bV^*\sder\bdmc{G}(m)\Psi_r\,. &&& (\text{Reduced Jacobian matrix})
\end{align*}
The reduced loss function can now be estimated via joints samples $\{m_j\iid\mu$, $\bV^*\bdmc{G}(m_j)$, $\bJ_r(m_j)\}_{j=1}^{n_t}$.

\subsection{A brief summary}

\begin{figure}[!h]
\centering
\begin{tikzpicture}[scale = 0.8, transform shape]

\node[bag] (Point) at (4.5,3.2) [minimum width=1cm,minimum height=2cm] {\textbf{Reduced basis DINO surrogate}\\ \\$\displaystyle\bdmc{G}(m)\approx\widetilde{\bdmc{G}}(m;\bw) \coloneqq \bV\nn(\Psi_r^*m, \bw)$};

\node[draw, trapezium, thick, rotate=-90, trapezium stretches body, 
  text width=2cm, align=center] (Input) at (0,0) {\rotatebox{90}{%
  \parbox{2.8cm}{\centering Input encoder\\ \hspace{0.1 in}\\$\displaystyle\Psi_r^*\in\HS(\scrM, \R^r)$}}};

\node[bag,draw,outer sep=0pt,thick] (NN) at (4.5,0) [minimum width=3cm,minimum height=4cm] {Neural network\\ with weights $\bw$ \\ \\
                                                                    $\nn(\cdot, \bw):\mathbb{R}^r\to\R^{d_y}$};
\draw (Input.north west) -- (NN.north west) (Input.north east) -- (NN.south west);

\node[bag,draw,outer sep=0pt,thick] (Output) at (9,0)[minimum width=2cm,minimum height=2cm] {Output basis\\ \\
                                                                    $\bV\in\HS(\R^{d_y}, \scrY)$};

\draw (NN.north east) -- (Output.north west) (NN.south east) -- (Output.south west);

\node[bag,draw,outer sep=0pt,thick] (NN1) at (4.5,-4.6) [minimum width=7cm,minimum height=3cm] {Neural network training using joint samples of input--output--reduced Jacobian\\ 
\\
                                                                    $\displaystyle\min_{\bw\in\R^{d_w}} \frac{1}{2n_t}\sum_{j=1}^{n_t}\left(\Big\lVert\bV^*\bdmc{G}(m_j) - \nn(\Psi_r^* m_j, \bw)\Big\rVert^2+ \Big\lVert\bJ_r(m_j) - \partial_{\bmm_r}\nn(\Psi_r^*m_j, \bw)\Big\rVert^2_{F}\right)$ };

\draw[dotted] (NN.south west) -- (NN1.north west);
\draw[dotted] (NN.south east) -- (NN1.north east);

\end{tikzpicture}
\caption{A schematic of reduced basis DINO architecture and learning for surrogate approximation $\widetilde{\bdmc{G}}\approx \bdmc{G}$ in $H^1_{\mu}(\scrM;\scrY)$. }
\end{figure}

We emphasize the following important points about our operator learning formulation:
\begin{enumerate}[leftmargin=*]
    \item The derivative-informed operator learning $\bdmc{G}\approx\widetilde{\bdmc{G}}(\cdot;\bw)$ is formulated as an approximation problem in $H^1_{\mu}(\scrM;\scrY)$, a Sobolev space of nonlinear mappings between two separable Hilbert spaces $\scrM$ and $\scrY$. While the parameter space $\scrM$ has infinite dimensions and the observable CM space $\scrY$ has finite dimensions in our setting, the learning formulation is general and can be applied to infinite-dimensional output spaces. However, the stochastic derivative must remain an HS operator when the output space becomes a separable Hilbert space.
    \item The neural network in the reduced basis DINO learns the mapping from the reduced coefficient vector of the parameter $\Psi_r^*m\in\R^r$ to the model-predicted coefficient vector of the observables $\bV^*\bdmc{G}(m)\in\R^{d_y}$.
    \begin{align*}
        &\nn(\cdot, \bw)\approx \Psi_r^*m\mapsto \bV^*\bdmc{G}(m)\,. &&& (\text{Neural network approximation})
    \end{align*}
    \item The neural network Jacobian of the reduced basis DINO learns the nonlinear mapping from the reduced coefficient vector of the parameter $\Psi_r^*m\in\R^{r}$ to the model-predicted reduced Jacobian matrix $\bJ_r(m)\in\R^{d_y\times r}$:
    \begin{align*}
        &\partial_{\bmm_r}\nn(\cdot, \bw)\approx \Psi_r^*m\mapsto \bJ_r(m)\,. &&&(\text{Neural network Jacobian approximation})
    \end{align*}
    \item As a result of the reduced basis architecture, the training sample storage and training cost are independent of the discretization dimension of the parameter space.
\end{enumerate}

\begin{remark}
In the following presentation, we often omit the notation for dependency on the neural network parameter $\bw$ and use the tilde symbol $\widetilde{\cdot}$ when referring to the quantities computed via surrogate evaluations.
\end{remark}

\subsection{Derivative and prior-based reduced bases}
This subsection describes two types of reduced bases that can be used to construct DINO. The first type is based on the derivative-informed subspace (DIS, \citealt{constantine2014active, zahm2020gradient, Cui2021datafree, oleary2022derivative}. The reduced bases for the derivative-informed subspace can be found by the following eigenvalue problem in $\scrH_{\mu}$ for the ppGNH \eqref{eq:ppgh}:
\begin{align}\label{eq:jtj_eigenvalue}
    \text{Find } \big\{(\disev{j}, \disbasis{j})&\in\R_+\times\scrH_{\mu}\big\}_{j=1}^{\infty}\text{ with decreasing } \disev{j} \text{ such that }\hfill\nonumber\\
    &\begin{cases}
    \left(\mathbb{E}_{M\sim\mu}\left[\calH(M)\right] - \disev{j}\calI_{\scrH_{\mu}}\right)\disbasis{j} = 0\,,& j\in\N\,;\\
    \left\langle\disbasis{j},\disbasis{k}\right\rangle_{\cpr^{-1}} = \delta_{jk} & j,k\in\N\,.
    \end{cases}
\end{align}
We select the first $r$ bases that correspond to the $r$ largest eigenvalues to form encoders and decoders. During numerical computation, a Monte Carlo estimate of the expected ppGNH $\mathbb{E}_{M\sim\mu}[\calH(M)]$ is computed at a set of prior samples $m_j\iid\mu$, $j=1,\dots, n_{\text{DIS}}$:
\begin{equation}\label{eq:mc_ppgnh}
    \widehat{\cal{H}}(\{m_j\}_{j=1}^{n_{\text{DIS}}}) \coloneqq \frac{1}{n_{\text{DIS}}}\sum_{j=1}^{{n_{\text{DIS}}}}\calH(m_j)\approx\mathbb{E}_{M\sim\mu}[\calH(M)]\,.
\end{equation}
The eigenvalue problem in \cref{eq:jtj_eigenvalue} is solved to obtain the eigenpairs $\big\{(\widehat{\disev{j}}, \widehat{\disbasis{j}})\big\}_{j=1}^{r}$ of $\widehat{\calH}$, which gives the following DIS approximation of the expected ppGNH:
\begin{equation}\label{eq:dis_gnh}
    \mathbb{E}_{M\sim\mu}[\calH(M)]\approx \widehat{\Psi_r^{\text{DIS}}}\widehat{\boldsymbol{\Lambda}_r^{\text{DIS}}}\widehat{\Psi_r^{\text{DIS}}}^*,
\end{equation}
where the linear encoder and decoder are defined as in \cref{eq:projector} and $\widehat{\boldsymbol{\Lambda}_r^{\text{DIS}}}\in\R^{r\times r}$ is a diagonal matrix consists of the eigenvalues.

The second type of reduced bases is based on the Karhunen--Lo\`eve expansion (KLE) of the prior distribution:
\begin{equation*}
    M = \sum_{j=1}^{\infty} \sqrt{\kleev{j}} \Xi_j\eta_j \sim \mu\,,\quad \Xi_j\iid \mathcal{N}(0, 1)\,,\quad \left\langle\eta_j, \eta_k\right\rangle_{\scrM} = \delta_{jk}\,,
\end{equation*}
where $\{(\kleev{j},\eta_j)\in\R_+\times\scrM\}_{j=1}^{\infty}$ are eigenpairs of the prior covariance $\cpr$ with $\scrM$-orthonormal eigenbases and decreasing eigenvalues. We refer to the $r$-dimensional subspace spanned by $\{\eta_j\}_{j=1}^r$ as the rank-$r$ KLE subspace or simply the KLE subspace. A set of reduced $\scrH_{\mu}$-ONBs of the KLE subspace $\{\klebasis{j}\}_{j=1}^{r}$ can be found by
\begin{equation*}
    \klebasis{j} = \sqrt{\kleev{j}}\eta_j\,, \quad 0\leq j\leq r\,.
\end{equation*}
The KLE reduced bases $\{\klebasis{j}\}_{j=1}^{r}$ can be computed with high precision for some representations of the $\cpr$, notably Laplacian inverse or bi-Laplacian inverse Mat\'ern covariances for Gaussian random functions \citep{Bui2013, VillaPetraGhattas21}. More generally, the KLE reduced bases can be approximated from samples.

\subsection{Training sample generation and cost analysis for PDE models}\label{subsec:data_generation}

We describe a training sample generation procedure and its cost analysis when the PtO map $\bdmc{G}$ is defined through a PDE. In particular, we consider an abstract variational residual form of the PDE as follows:
\begin{align}\label{eq:pde_residual_form}
    \text{Given } m\in\scrM \text{ find } u\in\scrU\text{ such that } \mathcal{R}(u, m) = 0 \in\scrV\,, && (\text{PDE model})
\end{align}
where $\scrU$ and $\scrV$ are Hilbert spaces corresponding to the spaces of PDE state and residual, and $\calR:\scrU\times\scrM\to\scrV$ is the PDE residual operator. The residual space $\scrV$ is the dual space of the \textit{space of adjoint variable} in the context of PDE-constrained optimization \citep{Antil2018, manzoni2021optimal} and the two spaces are identical when $\scrV$ is a Hilbert space. While the numerical examples in this work focus on steady-state problems where $\scrU$ is a Sobolev space defined over a spatial domain, e.g., $H^1(\Omega)$ where $\Omega$ is a spatial domain, our methodology is general. It can be applied to, e.g., time-evolving problems where $\scrU$ is a time-evolving space, e.g., $L^2([0, T];H^1(\Omega))$ where $T$ is a terminal time. 

Assume the PDE solution operator $\mathcal{F}:\scrM\to\scrU$ is composed with a linear observation operator $\bdmc{O}\in \HS(\scrU,\scrY)$ to define the PtO map $\bdmc{G}$:
\begin{align*}
    \bdmc{G}\coloneqq \bdmc{O}\circ\mathcal{F}\,\,,\quad \mathcal{R}(\mathcal{F}(m),m) = 0\quad\mu\text{-a.e.} &&& (\text{PDE-constrained PtO map})
\end{align*}
Each evaluation of the PtO map requires solving a PDE in \cref{eq:pde_residual_form}.
The action and the adjoint action of the stochastic derivative of the PtO map is given by the stochastic derivative of the PDE solution operator $D_{\scrH_{\mu}}\calF(m)\in B(\scrH_{\mu}, \scrU)$:
\begin{equation*}
    \sder\bdmc{G}(m)\delta m = \left(\bdmc{O}\circ\sder\mathcal{F}(m)\right)\delta m\,,\quad \sder\bdmc{G}(m)^*\delta \by = \left(\sder\mathcal{F}(m)^*\circ\bdmc{O}^*\right)\delta\by\,,
\end{equation*}
where the action of the derivative is given by the partial G\^ateaux derivatives of the residual with respect to the PDE state and the parameter (in the direction of $\scrH_{\mu}$), denoted by $\partial_\scrU\mathcal{R}(\mathcal{F}(m),m)\in B(\scrU,\scrV)$ and $\partial_{\scrH_{\mu}}\mathcal{R}(\mathcal{F}(m),m)\in B(\scrH_{\mu},\scrV)$ respectively. In particular, the implicit function theorem \citep{Ciarlet2013} implies the following relations:
\begin{align*}
    \sder\calF(m)\delta m &= -\underbrace{\left(\partial_{\scrU} \mathcal{R}(\mathcal{F}(m), m)\right)^{-1}}_{\displaystyle\mathclap{\delta v\mapsto\delta u}}\underbrace{\partial_{\scrH_{\mu}}\mathcal{R}(\mathcal{F}(m),m)}_{\displaystyle\mathclap{\delta m\mapsto \delta v}}\delta m\,, &&& (\text{Direct sensitivity})\\
    \sder\calF(m)^*\delta u &= -\underbrace{\partial_{\scrH_{\mu}}\mathcal{R}(\mathcal{F}(m),m)^*}_{\displaystyle\mathclap{\delta v\mapsto\delta m}}\underbrace{\left(\partial_{\scrU} \mathcal{R}(\mathcal{F}(m), m)^*\right)^{-1}}_{\displaystyle\mathclap{\delta u\mapsto\delta v}}\delta u\,, &&& (\text{Adjoint sensitivity})
\end{align*}
where $\delta v\in\scrV$ indicates a variation in the PDE residual or, equivalently, an adjoint variable. Evaluating the action of $\sder\calF(m)$ requires solving the linearized PDE problem for $\delta v\mapsto\delta u$, and evaluating its adjoint action $\sder\calF(m)^*$ requires solving the linear adjoint problem for $\delta u\mapsto\delta v$; see, e.g., \citealt[Section 5]{Ghattas2021}.

The associated computational cost for generating $n_t$ training samples at parameter samples $m_j\iid \mu$, $j=1\dots n_t$, can be decomposed as follows:
\begin{equation*}
\setlength\extrarowheight{5pt}
\begin{tabular}{|c r l|}\hline
        & $1\times$ & \textbf{Cost of reduced bases estimation}\\
     $+$ & $n_t\times$ & \textbf{Cost of a PDE solve}\\
     $+$ & $n_t\times$ & \textbf{Cost of evaluating the reduced Jacobian }$\bJ_r$\\\hline
     $=$ & & \textbf{Cost of sample generation for DINO training} \\\hline
\end{tabular}
\setlength\extrarowheight{-5pt}
\end{equation*}
When compared to $L_\mu^2$ training of an operator surrogate, DINO training requires additionally forming reduced Jacobian matrices $\bJ_r(m_j)\in\R^{d_y\times r}$ at each parameter sample $m_j$ via rows or columns. In \Cref{tab:cost_jacobian} and the following paragraph, we provide a simple cost analysis for this task when the parameter and the state space are discretized. 

For time-evolving problems, we assume the state space $\scrU$ is discretized such that $\scrU^h$ is isomorphic to $\R^{d_t\times d_u}$, where $d_t$ is the dimension of the temporal discretization and $d_u$ is the dimension of the spatial discretization. We assume such discretization leads to $d_t$ systems of equations (linear PDE) or $d_t$ iterative systems of equations (nonlinear PDE) of size $d_u\times d_u$. For steady-state problems, we take $d_t = 1$. When a direct solver is used, the cost of factorizing systems of equations for a typical PDE problem is $O(d_td_u^{3/2})$ and $O(d_td_u^{2})$ for 2D and 3D spatial domains, while back-substitution has a cost of $O(d_td_u \ln d_u)$ \citep{davis2016survey}. The factorization used for solving a linear PDE can be reused to form $\bJ_r(m_j)$ via back-substitution, making the additional cost of $H^1_{\mu}$ training sample generation scale much slower with $d_u$ compared to the cost of $L^2_{\mu}$ training. When an iterative solver is used, one can reuse preconditioners for a linear PDE to form $\bJ_r(m_j)$, but their cost analysis should be performed on a case-by-case basis. For nonlinear PDEs, one needs to solve one linear system of equations with $\min\{r, d_y\}$ different right-hand side vectors to form $\bJ_r(m_j)$, which is potentially much cheaper than solving a highly nonlinear PDE problem via iterative methods such as the Newton--Rapshon method.

\begin{table}[!h]
\centering
\setlength\extrarowheight{10pt}
\begin{tabular}{|rc|}\hline
       \multicolumn{2}{|c|}{\textbf{Forming a reduced Jacobian matrix $\bJ_r(m_j)\in\R^{d_y\times r}$}}\\\hline
       Linearized forward sensitivity: & $\quad \displaystyle\text{column}_l(\bJ_r(m_j)) = \bV^*\sder\bdmc{G}(m_j)\psi_l \,,\quad l=1\dots r\,.$\\
         Adjoint sensitivity: &
       $\quad\displaystyle\text{row}_k(\bJ_r(m_j)) = \Psi_r^*\sder\bdmc{G}(m_j)^*\bv_k \,,\quad k=1\dots d_y\,.$\\
       \hline
\end{tabular}
    \begin{tabular}{|c|c|c|}\hline
        \textbf{Linear} & \textbf{Solver} & \makecell{\textbf{Operation}\\ ($d_t=1$ for steady-state problems)}\\\hline
        \multirow{2}{*}{\cmark} & Direct & \makecell[l]{$d_t\times\min\{d_y, r\}\times$ Back-substitution \\(\textbf{note: significant cost saving from reusing factorization})}  \\\hhline{~--}
         & Iterative & \makecell[l]{$d_t\times\min\{d_y, r\}\times$ Iterative solve \\(\textbf{note: significant cost saving from reusing preconditioner})}  \\\hline 
         \multirow{2}{*}{\xmark} & Direct & \makecell[l]{$d_t \times $ Factorization $+$ $d_t\times\min\{d_y, r\}\times$ Back-substitution}\\\hhline{~--} 
         & Iterative & \makecell[l]{$d_t \times $ Preconditioner build $+$  $d_t\times\min\{d_y, r\}\times$ Iterative solve}\\\hline
    \end{tabular}
    \caption{The cost analysis of forming reduced Jacobian matrix $\bJ_r(m)$ at a parameter sample $m_j$ given parameter reduced bases $\{\psi_j\}_{j=1}^{\infty}$ and observable basis $\{\bv\}_{j=1}^{d_y}$.\label{tab:cost_jacobian}}
\setlength\extrarowheight{-10pt}
\end{table}

\subsection{Neural operator approximation error}\label{subsec:error}
This subsection briefly discusses approximation error for reduced basis DINO surrogates. To the best of our knowledge, there are no existing theoretical studies on $H^1_{\mu}$ approximation error with input dimension reduction. We focus on theoretical results that isolate various sources of $L^2_{\mu}$ approximation error under the assumption that the true mapping lives in $H^1_{\mu}$ and comment on the relation between neural network size and the $L^2_{\mu}$ approximation error. 

Understanding the $L^2_{\mu}$ approximation error of the operator surrogate is important as it is closely linked to the efficiency of the DA procedure. The connection between the second stage acceptance probability and the $L^2_{\mu}$ approximation error is discussed in \cref{app:da_and_error}.

Here, we provide results on the $L^2_{\mu}(\scrM;\scrY)$ approximation error of the DIS and KLE reduced basis neural operators. Our results show that a reduced basis architecture leads to approximation error contributions due to truncation and neural network approximation of the optimal reduced mapping. This mapping, denoted by $\bdmc{G}_r$, can be defined explicitly \cite[Proposition 2.3]{zahm2020gradient} for a given pair of linear encoder $\Psi_r^*$ and decoder $\Psi_r$ constructed as in \cref{eq:projector}. Let $\calP_r \coloneqq \Psi_r\Psi_r^*$ be a projection on $\scrM$ as in \cref{eq:projection}. We have
\begin{subequations}\label{eq:optimal_reduced_mapping}
\begin{align}
        &\bdmc{G}_r(m) \coloneqq \mathbb{E}_{M\sim\mu}\left[\bdmc{G}\left(\calP_rm + (\calI_{\scrM}-\calP_r)M\right)\right]\,, && (\text{Subspace $L_\mu^2$ projection})\\
        &\norm{\bdmc{G}-\bdmc{G}_r}_{L^2_{\mu}(\scrM;\scrY)} = \inf_{\stackrel{\bdmc{T}:\scrM\to\scrY}{\text{Borel func.}}}\norm{\bdmc{G}-\bdmc{T}\circ\calP_r}_{L^2_{\mu}(\scrM;\scrY)}\,. && (\text{Optimal reduced mapping})
\end{align}
\end{subequations}
The following propositions extend the results on DIS and KLE subspace by \citet[Proposition 2.6 and 3.1]{zahm2020gradient} to a function space setting using the Poincar\'e inequality in \Cref{thm:poincare}. The proofs are provided in \cref{app:proof_error}. 
\begin{proposition}[$L^2_\mu$ approximation error, DIS]\label[proposition]{prop:dis_error}
    Assume $\bdmc{G}\in H^1_{\mu}(\scrM;\scrY)$. Let $\{(\disev{j}$, $\disbasis{j})\}_{j=1}^{\infty}$ and $\{(\widehat{\disev{j}}, \widehat{\disbasis{j}})\}_{j=1}^{\infty}$ be the $\scrH_{\mu}$-orthonormal eigenpairs of the expected ppGNH $\mathbb{E}_{M\sim\mu}\left[\calH(M)\right]$ in \cref{eq:jtj_eigenvalue} and its estimator $\widehat{\calH}$ in \cref{eq:mc_ppgnh} with decreasing eigenvalues. Consider a reduced basis neural operator constructed using a linear encoder $\widehat{\Psi_r^{\normalfont\ignorespaces \text{DIS}}}^*\in \HS(\scrM,\R^{d_y})$ based on $\{\widehat{\disbasis{j}}\}_{j=1}^{r}$ as in \cref{eq:projector} and any $\scrY$-orthonormal basis $\bV\in\HS(\R^{d_y},\scrY)$:
    \begin{equation*}
        \widetilde{\bdmc{G}}(\cdot;\bw) \coloneqq \bV\circ\nn(\cdot,\bw)\circ\widehat{\Psi_r^{\normalfont\ignorespaces \text{DIS}}}^*\,.
    \end{equation*}
    The following upper bound holds for the $L^2_{\mu}(\scrM;\scrY)$ approximation error of $\widetilde{\bdmc{G}}$ to $\mathcal{\bdmc{G}}$:
    \begin{align*}
        \norm{\bdmc{G}-\widetilde{\bdmc{G}}(\cdot;\bw)}_{L^2_{\mu}(\scrM;\scrY)}&\leq \overbrace{\norm{\nn(\cdot,\bw) - \bV^*\circ\bdmc{G}_r\circ\widehat{\Psi_r^{\normalfont\ignorespaces \text{DIS}}}}_{L^2_{\mathcal{N}(\bzero, \bI_r)}(\R^r;\R^{d_y})}}^{\displaystyle\mathclap{\text{Neural network error}}}\\
        &\qquad +  \Big(\underbrace{\sum_{j=r+1}^{\infty}\disev{j}}_{\displaystyle\mathclap{\text{Basis truncation error}}} + \underbrace{2r\norm{\mathbb{E}_{M\sim\mu}\left[\calH(M)\right]-\widehat{\calH}}_{B(\scrH_{\mu})}}_{\displaystyle\mathclap{\text{Sampling error}}}\Big)^{1/2}\,,
    \end{align*}
    where $\widehat{\Psi_r^{\normalfont\ignorespaces \text{DIS}}}\in \HS(\R^{r},\scrM)$ is the linear decoder based on $\{\widehat{\disbasis{j}}\}_{j=1}^{r}$ as in \cref{eq:projector}, and $\bdmc{G}_r$ is the optimal reduced mapping of $\bdmc{G}$ in \cref{eq:optimal_reduced_mapping}.
\end{proposition}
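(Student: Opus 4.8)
The plan is to bound the error by inserting the optimal reduced mapping $\bdmc{G}_r$ from \cref{eq:optimal_reduced_mapping} built on the estimated DIS encoder/decoder and applying the triangle inequality,
\[
\norm{\bdmc{G}-\widetilde{\bdmc{G}}(\cdot;\bw)}_{L^2_{\mu}} \leq \norm{\bdmc{G}-\bdmc{G}_r}_{L^2_{\mu}} + \norm{\bdmc{G}_r-\widetilde{\bdmc{G}}(\cdot;\bw)}_{L^2_{\mu}}\,,
\]
then to identify the first summand with the square root of the truncation-plus-sampling bound and the second with the neural network error. For the neural network summand I would first note that $\bdmc{G}_r$ depends on its argument only through the reduced coordinates $\widehat{\Psi_r^{\text{DIS}}}^*m$, so $\bdmc{G}_r=(\bdmc{G}_r\circ\widehat{\Psi_r^{\text{DIS}}})\circ\widehat{\Psi_r^{\text{DIS}}}^*$. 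Setting $\bZ\coloneqq\widehat{\Psi_r^{\text{DIS}}}^*M$ and using that the $\scrH_\mu$-orthonormality $\langle\widehat{\disbasis{j}},\widehat{\disbasis{k}}\rangle_{\cpr^{-1}}=\delta_{jk}$ makes the pushforward of $\mu=\mathcal{N}(0,\cpr)$ under the encoder equal to $\mathcal{N}(\bzero,\bI_r)$, together with the $\scrY$-isometry $\bV\bV^*=\calI_{\scrY}$ and $\norm{\bV\bx}_{\bC_n^{-1}}=\norm{\bx}$, a change of variables collapses $\norm{\bdmc{G}_r-\widetilde{\bdmc{G}}(\cdot;\bw)}_{L^2_\mu}$ exactly to the claimed $L^2_{\mathcal{N}(\bzero,\bI_r)}(\R^r;\R^{d_y})$ norm of $\nn(\cdot,\bw)-\bV^*\circ\bdmc{G}_r\circ\widehat{\Psi_r^{\text{DIS}}}$.

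For the projection summand, I would exploit that $\bdmc{G}_r$ is the $L^2_\mu$-conditional expectation of $\bdmc{G}$ given the reduced coordinates. Disintegrating $\mu$ along the $\cpr^{-1}$-orthogonal splitting $M=\widehat{\calP}_rM+\widehat{\calP}_r^\perp M$ into independent Gaussian factors and applying the Poincar\'e inequality of \Cref{thm:poincare} (whose constant is $1$) to the complementary conditional factor, then integrating over the reduced coordinates, gives
\[
\norm{\bdmc{G}-\bdmc{G}_r}_{L^2_\mu}^2 \leq \mathbb{E}_{M\sim\mu}\norm{\sder\bdmc{G}(M)\,\widehat{\calP}_r^\perp}_{\HS(\scrH_\mu,\scrY)}^2\,,\qquad \widehat{\calP}_r^\perp\coloneqq\calI_{\scrH_\mu}-\widehat{\Psi_r^{\text{DIS}}}\widehat{\Psi_r^{\text{DIS}}}^*\,.
\]
Expanding this Hilbert--Schmidt norm in an $\scrH_\mu$-ONB adapted to $\widehat{\calP}_r^\perp$ and using $\calH(m)=\sder\bdmc{G}(m)^*\sder\bdmc{G}(m)$ from \cref{eq:ppgh} rewrites the right-hand side as $\operatorname{tr}_{\scrH_\mu}(\widehat{\calP}_r^\perp\calH_\mu\widehat{\calP}_r^\perp)$, where $\calH_\mu\coloneqq\mathbb{E}_{M\sim\mu}[\calH(M)]$.

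It then remains to control this trace by the true tail eigenvalues plus the sampling error. By the cyclic property and idempotency of projections, $\operatorname{tr}(\widehat{\calP}_r^\perp\calH_\mu\widehat{\calP}_r^\perp)=\operatorname{tr}(\calH_\mu)-\operatorname{tr}(\widehat{\calP}_r\calH_\mu\widehat{\calP}_r)$, while $\sum_{j>r}\disev{j}=\operatorname{tr}(\calH_\mu)-\sum_{j=1}^r\disev{j}$, so it suffices to show $\sum_{j=1}^r\disev{j}-\operatorname{tr}(\widehat{\calP}_r\calH_\mu\widehat{\calP}_r)\leq 2r\norm{\calH_\mu-\widehat{\calH}}_{B(\scrH_\mu)}$. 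Since $\widehat{\calP}_r$ projects onto the top-$r$ eigenspace of $\widehat{\calH}$, the Ky Fan maximum principle gives $\operatorname{tr}(\widehat{\calP}_r\widehat{\calH}\widehat{\calP}_r)=\sum_{j=1}^r\widehat{\disev{j}}$, and I would split
\[
\sum_{j=1}^r\disev{j}-\operatorname{tr}(\widehat{\calP}_r\calH_\mu\widehat{\calP}_r)=\sum_{j=1}^r\big(\disev{j}-\widehat{\disev{j}}\big)+\operatorname{tr}\big(\widehat{\calP}_r(\widehat{\calH}-\calH_\mu)\widehat{\calP}_r\big)\,.
\]
Weyl's inequality for compact self-adjoint operators bounds the first sum by $r\norm{\calH_\mu-\widehat{\calH}}_{B(\scrH_\mu)}$, and the rank of $\widehat{\calP}_r$ bounds the second trace by the same quantity, producing the $2r\norm{\calH_\mu-\widehat{\calH}}_{B(\scrH_\mu)}$ sampling term. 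Collecting terms and taking square roots yields the stated estimate.

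The hard part will be the conditional Poincar\'e step. It requires making rigorous, in the infinite-dimensional Gaussian setting, the disintegration of $\mu$ along the finite-rank $\cpr^{-1}$-orthogonal projection $\widehat{\calP}_r$ as a product of independent Gaussians, verifying that the complementary conditional factor is again Gaussian with Cameron--Martin derivative acting exactly as $\sder\bdmc{G}(m)\widehat{\calP}_r^\perp$ so that \Cref{thm:poincare} applies with constant $1$ on that factor, and checking that $\bdmc{G}_r$ is precisely the resulting conditional mean, which both validates the orthogonal decomposition underlying the inequality and supplies the change of variables in the neural network term. Everything else---the isometry computations, the trace identities, and the Ky Fan and Weyl estimates---is routine once this structural step is in place.
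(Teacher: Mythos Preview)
Your proposal is correct and follows essentially the same route as the paper: triangle inequality at $\bdmc{G}_r$, change of variables for the neural network term via $\widehat{\Psi_r^{\text{DIS}}}^*M\sim\mathcal{N}(\bzero,\bI_r)$, a subspace Poincar\'e step (your disintegration language is exactly the paper's application of \Cref{thm:poincare} to $m\mapsto\bdmc{G}(\widehat{\calP}_r m'+\widehat{\calP}_r^\perp m)$, citing Zahm et al.), and a trace perturbation bound for the sampling error. The only cosmetic difference is in the last step: you use Weyl's inequality on $\sum_{j\le r}(\disev{j}-\widehat{\disev{j}})$ plus a rank-$r$ bound on $\operatorname{tr}\big(\widehat{\calP}_r(\widehat{\calH}-\calH_\mu)\widehat{\calP}_r\big)$, whereas the paper applies the single estimate $|\operatorname{Tr}(\calU_r^*(\widehat{\calH}-\calH_\mu)\calU_r)|\le r\norm{\widehat{\calH}-\calH_\mu}_{B(\scrH_\mu)}$ twice together with the Ky Fan optimality of $\widehat{\Psi_r^{\text{DIS}}}$ for $\widehat{\calH}$; both yield the same $2r$ constant.
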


\begin{proposition}[$L^2_\mu$ approximation error, KLE]\label[proposition]{prop:kle_error}
    Assume that $\bdmc{G}\in H^1_{\mu}(\scrM;\scrY)$ is Lipshitz continuous with a Lipschitz constant $c_{\bdmc{G}}\geq 0$, i.e., 
    \begin{equation*}
        \norm{\bdmc{G}(m_1)-\bdmc{G}(m_2)}_{\bC_n^{-1}} \leq c_{\bdmc{G}}\norm{m_1-m_2}_{\scrM}\quad \forall m_1, m_2\in\scrM\,.
    \end{equation*}
    Let $\{(\kleev{j}, \eta_j)\}_{j=1}^{\infty}$ be the $\scrM$-orthonormal eigenpairs of $\cpr$ with decreasing eigenvalues. Consider a reduced basis neural operator constructed as \cref{eq:rb_dino} using a linear encoder ${\Psi_r^{\normalfont\ignorespaces \text{KLE}}}^*\in \HS(\scrM,\R^{d_y})$ based on $\left\{\klebasis{j}\coloneqq\sqrt{\kleev{j}}\eta_j\right\}_{j=1}^{r}$ as in \cref{eq:projector} and any $\scrY$-orthonormal basis $\bV\in\HS(\R^{d_y},\scrY)$:
        \begin{equation*}
        \widetilde{\bdmc{G}}(\cdot;\bw) \coloneqq \bV\circ\nn(\cdot,\bw)\circ{\Psi_r^{\normalfont\ignorespaces \text{KLE}}}^*\,.
    \end{equation*}
    The following upper bound holds for $L^2_{\mu}(\scrM;\scrY)$ approximation error of $\widetilde{\bdmc{G}}$ to $\bdmc{G}$:
    \begin{align*}
        \norm{\bdmc{G}-\widetilde{\bdmc{G}}(\cdot;\bw)}_{L^2_{\mu}(\scrM;\scrY)}&\leq \underbrace{\norm{\nn - \bV^*\circ\bdmc{G}_r\circ\Psi_r^{\normalfont\ignorespaces \text{KLE}}}_{L^2_{\mathcal{N}(\bzero, \bI_r)}(\R^r;\R^{d_y})}}_{\displaystyle\mathclap{\text{Neural network error}}} + \underbrace{c_{\bdmc{G}}\Big(\sum_{j=r+1}^{\infty}\left(\kleev{j}\right)^2\Big)^{1/2}}_{\displaystyle\mathclap{\text{Basis truncation error}}},
    \end{align*}
    where $\Psi_r^{\normalfont\ignorespaces \text{KLE}}\in \HS(\R^{r},\scrM)$ is the linear decoder based on $\{\klebasis{j}\}_{j=1}^{r}$ as in \cref{eq:projector} and $\bdmc{G}_r$ is the optimal reduced mapping of $\bdmc{G}$ in \cref{eq:optimal_reduced_mapping}. Additionally, we have
    \begin{equation*}
        \sum_{j=r+1}^{\infty}\disev{j}\leq c_{\bdmc{G}}^2\sum_{j=r+1}^{\infty}\left(\kleev{j}\right)^2\,.
    \end{equation*}
    where $\{\disev{j}\}_{j=1}^{\infty}$ consists of decreasing eigenvalues of the expected ppGNH in \cref{eq:jtj_eigenvalue}.
\end{proposition}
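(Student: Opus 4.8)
\emph{Overall strategy.} Both assertions rest on a two–term split anchored on the optimality of the reduced mapping $\bdmc{G}_r$ and on the fact, special to a Gaussian prior, that the rank-$r$ KLE encoder pushes $\mu$ forward to a standard Gaussian. Writing $\calP_r\coloneqq\Psi_r^{\text{KLE}}{\Psi_r^{\text{KLE}}}^*$ for the induced projector, I would apply the triangle inequality in $L^2_{\mu}(\scrM;\scrY)$,
\[
\norm{\bdmc{G}-\widetilde{\bdmc{G}}(\cdot;\bw)}_{L^2_{\mu}(\scrM;\scrY)}\le \norm{\bdmc{G}_r-\widetilde{\bdmc{G}}(\cdot;\bw)}_{L^2_{\mu}(\scrM;\scrY)}+\norm{\bdmc{G}-\bdmc{G}_r}_{L^2_{\mu}(\scrM;\scrY)},
\]
and then show that the first summand equals the neural network error exactly and the second is controlled by the basis truncation error.

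\emph{Neural network term (exact).} Since $\bdmc{G}_r(m)$ depends on $m$ only through $\calP_r m$, it factors as $\bdmc{G}_r=(\bdmc{G}_r\circ\Psi_r^{\text{KLE}})\circ{\Psi_r^{\text{KLE}}}^*$ and hence shares the reduced input ${\Psi_r^{\text{KLE}}}^*m$ with $\widetilde{\bdmc{G}}$ from \eqref{eq:rb_dino}. The crucial observation is that $\big({\Psi_r^{\text{KLE}}}^*M\big)_j=\langle M,\klebasis{j}\rangle_{\cpr^{-1}}=\Xi_j$ are i.i.d.\ $\mathcal{N}(0,1)$, so ${\Psi_r^{\text{KLE}}}^*$ pushes $\mu$ forward to $\mathcal{N}(\bzero,\bI_r)$ on $\R^r$. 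Combining this change of variables with the isometry $\norm{\,\cdot\,}_{\bC_n^{-1}}=\norm{\bV^*\,\cdot\,}$ on $\scrY$ (valid because the columns of $\bV$ form a $\scrY$-ONB, whence $\bV^*\bV=\bI$ and $\bV\bV^*=\calI_{\scrY}$) collapses the first summand to $\norm{\nn-\bV^*\circ\bdmc{G}_r\circ\Psi_r^{\text{KLE}}}_{L^2_{\mathcal{N}(\bzero,\bI_r)}(\R^r;\R^{d_y})}$, precisely the stated neural network error. This step uses no regularity of $\bdmc{G}$ beyond square integrability and is identical to the corresponding step for the DIS architecture of \cref{prop:dis_error}.

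\emph{The common engine.} The remaining two nontrivial bounds reduce to a single estimate. For the truncation term, $\bdmc{G}_r$ is (by \eqref{eq:optimal_reduced_mapping}) the conditional expectation of $\bdmc{G}$ given the retained KLE coordinates, so $\bdmc{G}-\bdmc{G}_r$ has vanishing conditional mean; applying a conditional form of \cref{thm:poincare} on the complementary whitened coordinates $\{\Xi_j\}_{j>r}$ (each standard Gaussian, Poincaré constant one) yields $\norm{\bdmc{G}-\bdmc{G}_r}^2_{L^2_{\mu}}\le\mathbb{E}_{M\sim\mu}\big[\sum_{j>r}\norm{\sder\bdmc{G}(M)\klebasis{j}}_{\bC_n^{-1}}^2\big]$, since $\sder\bdmc{G}(M)\klebasis{j}$ is exactly $\partial_{\Xi_j}$ of $\bdmc{G}$ in whitened coordinates. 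For the eigenvalue comparison, Ky Fan trace optimality gives $\sum_{j>r}\disev{j}\le\mathrm{tr}\big((\calI_{\scrH_{\mu}}-\calP_r)\mathbb{E}_{M\sim\mu}[\calH(M)]\big)=\mathbb{E}_{M\sim\mu}\big[\sum_{j>r}\norm{\sder\bdmc{G}(M)\klebasis{j}}_{\bC_n^{-1}}^2\big]$, the same quantity. It therefore suffices to establish
\[
\mathbb{E}_{M\sim\mu}\left[\sum_{j>r}\norm{\sder\bdmc{G}(M)\klebasis{j}}_{\bC_n^{-1}}^2\right]\le c_{\bdmc{G}}^2\sum_{j>r}\left(\kleev{j}\right)^2,
\]
which follows by bounding each directional stochastic derivative $\sder\bdmc{G}(M)\klebasis{j}$ through Lipschitz continuity and inserting the norm of the scaled eigenvector $\klebasis{j}=\sqrt{\kleev{j}}\,\eta_j$. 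Taking square roots in the first application delivers the truncation bound $c_{\bdmc{G}}(\sum_{j>r}(\kleev{j})^2)^{1/2}$, and summing in the second delivers the final inequality.

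\emph{Expected main obstacle.} I expect the delicate point to be this last directional estimate, and specifically the bookkeeping of the norm in which $\bdmc{G}$ is assumed Lipschitz: the factor $\norm{\klebasis{j}}^2$ equals $\kleev{j}$ in the $\scrM$-norm but $(\kleev{j})^2$ in the $\cpr$-weighted variant, and this choice alone determines whether the spectral tail appears with first or second powers of the prior eigenvalues. One must also justify the conditional Poincaré reduction and the Ky Fan trace characterization for these trace-class operators in the $\cpr^{-1}$-weighted inner product of the infinite-dimensional space $\scrH_{\mu}$, rather than in a Euclidean setting where both tools are standard. The first two steps, by contrast, are essentially exact identifications and should be routine once the pushforward and isometry are in place.
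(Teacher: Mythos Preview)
Your proposal is correct and follows essentially the same route as the paper: both split via the triangle inequality, identify the neural-network term exactly through the pushforward ${\Psi_r^{\text{KLE}}}^*_\#\mu=\mathcal{N}(\bzero,\bI_r)$ (inherited from the proof of \cref{prop:dis_error}), invoke the subspace Poincar\'e bound \cref{eq:orm_upper_bound} for the truncation term, and control the resulting trace via the Lipschitz constant; for the eigenvalue comparison you cite Ky Fan while the paper phrases it as the DIS projector minimizing the trace upper bound, which is the same statement. The only cosmetic difference is that the paper routes the Lipschitz step through the operator-norm bound $\norm{\calH_A}_{B(\scrH_{\mu})}\le c_{\bdmc{G}}^2$ and a separate factor $\mathbb{E}_{M\sim\mu}\norm{(\calI_\scrM-\calP_r)M}_\scrM^2$, whereas you bound each directional term $\norm{\sder\bdmc{G}(M)\klebasis{j}}_{\bC_n^{-1}}$ directly; your term-by-term version makes the norm bookkeeping you (rightly) flag as the delicate point more transparent.
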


Furthermore, universal approximation theories of neural networks can help us understand the expressiveness of the neural network architecture (e.g., width, breadth, and activation functions) used in reduced basis neural operator surrogates. An important question is the neural network size, measured by the size of the weight $d_{w}$, needed to achieve a given neural network error tolerance. An exponential convergence in $L^2_{\mathcal{N}(\bzero,\bI_r)}(\R^r;\R)$ for approximating certain analytic functions by deep neural networks with the ReLU activation function is established by \citet[Theorem 4.7]{schwab2023deep}. Their theoretical results can be directly applied to the neural network error in our setting by stacking $\R^{d_y}$ of these deep neural networks to form an output space of $\R^{d_y}$, given that the optimal reduced mapping $\bdmc{G}_r$ is sufficiently regular. Using this construction, the convergence rate derived by Schwab and Zech is scaled linearly by $d_y$.
\section{Geometric MCMC via reduced basis neural operator}\label{sec:surrogate_mcmc}
This section derives dimension-independent geometric MCMC methods with proposals entirely generated by a trained reduced basis neural operator. This work focuses on the mMALA method introduced in \cref{subsec:mMALA_intro} and approximates all components in the mMALA proposal using the surrogate. We note that the derivation in this section is similar to the DR-$\infty$-mMALA method by \cite{lan2019adaptive}, except that our derivation (i) does not involve prior covariance factorization\footnote{We acknowledge that \cite{lan2019adaptive} utilizes $\scrH_{\mu}$-orthonormal reduced bases, which means that the DR-$\infty$-mMALA algorithm can be implemented without using prior factorization.}, (ii) does not distinguish between KLE and DIS reduced bases, and (iii) involves the reduced basis neural operator surrogate introduced in \cref{eq:rb_dino}.

\subsection{Surrogate approximation}
Given a trained neural network $\nn(\cdot;\bw^{\ddagger})$ as in \cref{eq:rb_dino}, we can approximate the data misfit in \cref{eq:bayes_rule_gauss} with $\widetilde{\Phi^{\by}}(\cdot;\bw^{\ddagger})\approx\Phi^{\by}$, 
\begin{subequations}
\begin{align}
    \widetilde{\Phi^{\by}}(m) &\equiv \widetilde{\Phi^{\by}_r}(\Psi_r^*m)\,, &&& (\text{Data misfit})\\
    \widetilde{\Phi^{\by}_r}(\bmm_r) &\coloneqq \frac{1}{2}\norm{\bV^*\by - \nn(\bmm_r)}^2\,, &&& (\text{Reduced data misfit})
\end{align}
\end{subequations}
the ppg in \cref{eq:ppg} with $\sder\widetilde{\Phi^{\by}}(\cdot;\bw^{\ddagger})\approx \sder\Phi^{\by}$,
\begin{subequations}
\begin{align}
    \sder\widetilde{\Phi^{\by}}(m) &\equiv\Psi_r\widetilde{\bg_r}(\Psi_r^*m)\,. &&& (\text{ppg})\\
    \R^r\ni\widetilde{\bg_r}(\bmm_r) &\coloneqq \partial_{\bmm_r}\nn(\bmm_r)^T\left(\bV^*\by - \nn(\bmm_r)\right)\,,&&&(\text{Reduced ppg})\label{eq:surrogate_rppg}
\end{align}
\end{subequations}
and the ppGNH in \cref{eq:ppgh} with $\widetilde{\calH}(\cdot;\bw^{\ddagger})\approx \calH$,
\begin{subequations}
    \begin{align}
            \widetilde{\calH}(m) &\equiv \Psi_r\widetilde{\bH_r}(\Psi_r^*m)\Psi_r^* &&&(\text{ppGNH})\\
    \R^{r\times r}\ni\widetilde{\bH_r}(\bmm_r) &\coloneqq (\partial_{\bmm_r}\nn(\bmm_r))^T\partial_{\bmm_r}\nn(\bmm_r)\,. &&& (\text{Reduced ppGNH})\label{eq:surrogate_rppgnh}
    \end{align}
\end{subequations}

\subsection{Surrogate prediction of posterior local geometry}\label{subsec:surrogate_geometry}

An eigendecomposition of the surrogate reduced ppGNH $\widetilde{\bH_r}(\bmm_r)$ with $\bmm_r = \Psi_r^*m$ is computed at the beginning of each step in the MH algorithm. Such an eigendecomposition is necessary for fast evaluations of multiple terms in the transition rate ratio \cref{eq:mmala_trr}. We denote the eigendecomposition of the surrogate reduced ppGNH as
\begin{equation}\label{eq:surrogate_eigendecomposition}
    \widetilde{\bH_r}(\bmm_r) = \widetilde{\bP_r}(\bmm_r)\widetilde{\bD_r}(\bmm_r)\widetilde{\bP_r}(\bmm_r)^T\,,\quad \begin{cases}
        \widetilde{\bP_r}(\bmm_r)^T\widetilde{\bP_r}(\bmm_r) = \bI_r\,;\\
        \left(\widetilde{\bD_r}(\bmm_r)\right)_{jk} = \widetilde{d}_j(\bmm_r)\delta_{jk}\,.
    \end{cases}
\end{equation}
where $\widetilde{\bP_r}(\bmm_r)\in\R^{r\times r}$ is a rotation matrix in $\R^r$ with columns consist of eigenvectors and $\widetilde{\bD_r}(\bmm_r)\in\R^{r\times r}$ is a diagonal matrix consists of eigenvalues $\{\widetilde{d}_j(\bmm_r)\}_{j=1}^{r}$. The rotation matrix nonlinearly depends on the parameter $m$ through $\bmm_r$, thus leading to a pair of position-dependent linear decoder and encoder as follows:
\begin{subequations}
\begin{align}
    \widetilde{\Psi_r}(\bmm_r) &\coloneqq \Psi_r\widetilde{\bP_r}(\bmm_r) &&&(\text{Position-dependent linear decoder})\label{eq:position_dependent_encoder}\\
    \widetilde{\Psi_r}(\bmm_r)^* &\coloneqq \widetilde{\bP_r}(\bmm_r)^T\Psi_r^* &&&(\text{Position-dependent linear encoder})\label{eq:position_dependent_decoder}
\end{align}
\end{subequations}
where the adjoint is taken in $\scrH_{\mu}$ similar to \cref{eq:projector}. The basis functions extracted from the position-dependent linear encoder and decoder represent the dominant directions of the surrogate posterior local geometry.

\begin{remark}
    In the following presentation, we often omit the notation of position dependency for the encoder, decoder, rotation matrix, and eigenvalues of the ppGNH when there is no ambiguity. Moreover, we adopt the index notation of diagonal matrices as in \cref{eq:surrogate_eigendecomposition} to explicitly reveal its structure.
\end{remark}
The covariance of the local Gaussian approximation of the posterior in \cref{eq:local_gaussian} can be approximated as follows:
\begin{align}\label{eq:surrogate_cpost}
    \widetilde{\cpo}(m)=\cpr - \widetilde{\Psi_r}\left(\frac{\widetilde{d}_j}{\widetilde{d_j}+1}\delta_{jk}\right)\widetilde{\Psi_r}^*\cpr\,,\quad \widetilde{\cpo}(m)^{-1} = \cpr^{-1} + \cpr^{-1}\widetilde{\Psi_r}(\widetilde{d}_j\delta_{jk})\widetilde{\Psi_r}^*\,.
\end{align}

\subsection{Sampling from the surrogate mMALA proposal}
We consider an approximation to the mMALA proposal using the reduced basis neural operator with $\mathcal{K}(m)=\widetilde{\cpo}(m;\bw^{\ddagger})$. By \cref{eq:surrogate_cpost} and \cref{eq:mmala}, we arrive at the following surrogate mMALA proposal:
\begin{align*}
    \widetilde{\calQ_{\mmala}}(m,\cdot) &= \mathcal{N}\left(sm + \left(1-s\right)\widetilde{\mathcal{A}}(m), \left(1-s^2\right)\widetilde{\cpo}(m)\right)\,,\quad\,s = \frac{4-\triangle t}{4+\triangle t}\,,\\
    \widetilde{\mathcal{A}}(m) &= \widetilde{\Psi_r}\left(\frac{\widetilde{d}_j}{\widetilde{d}_j+1}\delta_{jk}\right)\widetilde{\Psi_r}^*m - \widetilde{\Psi_r}\left(\frac{1}{\widetilde{d}_j+1}\delta_{jk}\right)\widetilde{\bP_r}^T\widetilde{\bg_r}\,.
\end{align*}
To sample from the surrogate mMALA proposal, we consider the following lemma:
\begin{lemma}\label[lemma]{lemm:gauss_rv}
    Let $M\sim \mathcal{N}(0, \cpr)$, $m\in\scrM$ and $\mathcal{T}\in B(\scrM)$. We have $m + \mathcal{T}M\sim\mathcal{N}(m, \mathcal{T}\cpr\mathcal{T}^*)$. Moreover, if $\Psi_r$ and $\Psi_r^*$ are a set of linear encoder and decoder defined using reduced $\scrH_{\mu}$-ONBs of rank $r$ as in \cref{eq:projector}, then
    \begin{equation*}
        \Psi_r^*M\sim\mathcal{N}(\bzero, \bI_r) \text{ and } (\calI_{\scrM}-\Psi_r\Psi_r^*)M \perp \Psi_r\Psi_r^*M\,,
    \end{equation*}
    where $\perp$ denotes pairwise independency of random elements.
\end{lemma}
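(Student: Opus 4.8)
The plan is to prove the three assertions separately, using throughout that a random element of a separable Hilbert space is Gaussian exactly when all of its continuous linear images are real Gaussian, and that its law is determined by its characteristic functional. For the affine-transformation statement I would compute the functional of $m+\mathcal{T}M$ directly: for arbitrary $x\in\scrM$, using $\langle\mathcal{T}M,x\rangle_\scrM=\langle M,\mathcal{T}^*x\rangle_\scrM$ and the known functional of $M\sim\mathcal{N}(0,\cpr)$,
\[ \mathbb{E}\!\left[e^{i\langle m+\mathcal{T}M,\,x\rangle_\scrM}\right] = e^{i\langle m,x\rangle_\scrM}\,\mathbb{E}\!\left[e^{i\langle M,\,\mathcal{T}^*x\rangle_\scrM}\right] = \exp\!\left(i\langle m,x\rangle_\scrM - \tfrac12\langle \mathcal{T}\cpr\mathcal{T}^*x,\,x\rangle_\scrM\right), \]
where I used $\langle\cpr\mathcal{T}^*x,\mathcal{T}^*x\rangle_\scrM=\langle\mathcal{T}\cpr\mathcal{T}^*x,x\rangle_\scrM$. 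This is precisely the characteristic functional of $\mathcal{N}(m,\mathcal{T}\cpr\mathcal{T}^*)$ (the covariance $\mathcal{T}\cpr\mathcal{T}^*$ being positive, self-adjoint and trace class, since $\mathcal{T}$ is bounded and $\cpr$ is trace class), which proves the first claim.

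The remaining two claims rest on giving rigorous meaning to the components $(\Psi_r^*M)_j=\langle M,\psi_j\rangle_{\cpr^{-1}}$, which is the crux of the argument because $\|M\|_{\cpr^{-1}}=\infty$ $\mu$-a.e.\ and the $\cpr^{-1}$-pairing is not literally defined pointwise. I would interpret each such pairing through the canonical Paley--Wiener isometry from $\scrH_\mu$ into $L^2(\mu)$ (the first Wiener chaos) carried by the Gaussian measure; see \cite{bogachev1998gaussian}. This isometry sends $h\in\scrH_\mu$ to a centered real Gaussian $\widehat{h}$ and is pinned down by the identities
\[ \mathbb{E}\!\left[\widehat{h_1}\,\widehat{h_2}\right]=\langle h_1,h_2\rangle_{\cpr^{-1}},\qquad \mathbb{E}\!\left[\widehat{h}\,\langle M,b\rangle_\scrM\right]=\langle h,b\rangle_\scrM, \]
valid for $h,h_1,h_2\in\scrH_\mu$ and $b\in\scrM$, both of which I would verify by expanding in the $\scrM$-orthonormal eigenbasis $\{\eta_i\}$ of $\cpr$ through the Karhunen--Lo\`eve series $M=\sum_i\sqrt{\kleev{i}}\,\Xi_i\eta_i$. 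With $(\Psi_r^*M)_j=\widehat{\psi_j}$, the vector $\Psi_r^*M$ is centered and jointly Gaussian, and the first identity together with the $\scrH_\mu$-orthonormality $\langle\psi_j,\psi_k\rangle_{\cpr^{-1}}=\delta_{jk}$ gives its covariance as $\bI_r$, so $\Psi_r^*M\sim\mathcal{N}(\bzero,\bI_r)$.

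For the independence claim I would note that the scalar functionals $(\Psi_r^*M)_j=\widehat{\psi_j}$ and $\langle(\calI_\scrM-\calP_r)M,b\rangle_\scrM$ (for $b\in\scrM$), with $\calP_r\coloneqq\Psi_r\Psi_r^*$, all lie in the first Wiener chaos of $\mu$, so any finite collection of them is jointly Gaussian and uncorrelatedness is therefore equivalent to independence of the generated $\sigma$-algebras. Writing $(\calI_\scrM-\calP_r)M=M-\sum_{k=1}^r\widehat{\psi_k}\,\psi_k$ and using the second Paley--Wiener identity, the cross-covariance computes as
\[ \mathbb{E}\!\left[(\Psi_r^*M)_j\,\langle(\calI_\scrM-\calP_r)M,b\rangle_\scrM\right] = \langle\psi_j,b\rangle_\scrM - \sum_{k=1}^r\delta_{jk}\langle\psi_k,b\rangle_\scrM = 0 \]
for every $b\in\scrM$. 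Hence $\Psi_r^*M$ is independent of $(\calI_\scrM-\calP_r)M$; and since $\calP_r M=\Psi_r(\Psi_r^*M)$ is a linear, hence measurable, function of $\Psi_r^*M$, this upgrades at once to independence of $\calP_r M$ and $(\calI_\scrM-\calP_r)M$, the asserted pairwise independence. I expect the only genuine obstacle to be the careful justification of the Paley--Wiener pairing and its two covariance identities; once these are in place, both the joint Gaussianity and the vanishing cross-covariance are routine.
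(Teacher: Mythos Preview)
Your proposal is correct and takes a genuinely different route from the paper. The paper cites the affine-transformation statement as standard, obtains $\Psi_r^*M\sim\mathcal{N}(\bzero,\bI_r)$ by applying that statement with the $\scrM$-adjoint of $\Psi_r^*$ identified as $\cpr^{-1}\Psi_r$ (so the covariance is $\Psi_r^*\cpr\,\cpr^{-1}\Psi_r=\bI_r$), and proves independence by computing the joint characteristic functional of $(\calP_r M,(\calI_\scrM-\calP_r)M)$ on the product space $\scrM\times\scrM$, showing after cancellation of cross terms that the covariance is block diagonal. Your approach instead invokes the Paley--Wiener isometry to give rigorous meaning to $\langle M,\psi_j\rangle_{\cpr^{-1}}$, then argues via joint Gaussianity of the first Wiener chaos and a direct cross-covariance computation. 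What this buys you is a cleaner treatment of the measurability issue you correctly flag: the paper's computation tacitly treats $\Psi_r^*$ as an element of $B(\scrM,\R^r)$, which requires $\cpr^{-1}\psi_j\in\scrM$ and is not guaranteed by $\scrH_\mu$-orthonormality alone, whereas your Paley--Wiener interpretation works for any $\scrH_\mu$-ONB. Conversely, the paper's characteristic-functional route is more self-contained once the adjoint identification is granted, and avoids importing the Wiener chaos machinery. Both arguments are short once their respective foundations are in place.
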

Based on \cref{lemm:gauss_rv}, we derive the following proposition for sampling the surrogate mMALA proposal by splitting the proposal into two parts: a position-dependent one for the $r$-dimensional coefficients in the reduced bases and the pCN proposal in the complementary subspace of $\scrM$, Range($\calI_{\scrM}-\Psi_r\Psi_r^*$).
\begin{proposition}\label[proposition]{prop:splitting}
    Given $m\in\scrM$ and $\triangle t>0$, define two conditional distributions with $s \coloneqq (4-\triangle t)/(4+\triangle t)$:
    \begin{enumerate}[leftmargin=*]
        \item $M^{\dagger}_{\perp}\sim\mathcal{Q}_{\text{\normalfont\ignorespaces pCN}}(m,\cdot)$ following the pCN proposal distribution in \eqref{eq:pCN_proposal} given by
        \begin{equation*}
            M_{\perp}^{\dagger} \coloneqq sm + \sqrt{1-s^2}M\,,\quad M\sim\mu\,.
        \end{equation*}
        \item $\bM_r^{\dagger}\sim\pi_{r}(\cdot|\bmm_r=\Psi_r^*m)$, a $r$-dimensional conditional random vector given by
    \begin{equation}\label{eq:reduced_space_sampling}
    \begin{aligned}
        \bM_r^{\dagger} & \coloneqq \widetilde{\bP_r}\left(\frac{\widetilde{d}_j+s}{\widetilde{d}_j+1}\delta_{jk}\right)\widetilde{\bP_r}^T\bmm_r - \widetilde{\bP_r}\left(\frac{1-s}{\widetilde{d}_j+1}\delta_{jk}\right)\widetilde{\bP_r}^T\widetilde{\bg_r}\\
        &\quad+ \widetilde{\bP_r}\left(\left(\frac{1-s^2}{\widetilde{d}_j+1}\right)^{1/2}\delta_{jk}\right)\boldsymbol{\Xi}\,.
    \end{aligned}
    \end{equation}
    \end{enumerate}
    where $\boldsymbol{\Xi}\sim\mathcal{N}(\bzero, \bI_r)$ is independent of $M$. We have 
    \begin{equation*}
        (\calI_{\scrM}-\Psi_r\Psi_r^*)M^{\dagger}_{\perp} + \Psi_r\bM_r^{\dagger}\sim \widetilde{\calQ_{\mmala}}(m,\cdot).
    \end{equation*}
\end{proposition}
See proofs of \cref{lemm:gauss_rv,prop:splitting} in \cref{app:proof_splitting}. While using an operator surrogate for the position-dependent proposal sampling is novel, the idea of proposal splitting is common in dimension-independent MCMC methods; see, e.g., \citet{cui2015data, cui2016dimension, beskos2017geometric, lan2019adaptive}.

\subsection{Evaluating acceptance probabilities}
The RN derivative $\widetilde{\rho_0}(m_1, m_2;\bw^{\ddagger})$ between the surrogate mMALA proposal $\widetilde{\mathcal{Q}_{\mmala}}(m_1,\meas m_2)$ and the pCN proposal $\mathcal{Q}_{\text{pCN}}(m_1,\meas m_2)$ can be efficiently evaluated using the trained neural network. Due to \cref{prop:splitting},  $\widetilde{\rho_0}$ is constant in Range($\calI_\scrM-\Psi_r\Psi_r^*$) and can be reduced to a function in $\R^r$ denoted as $\widetilde{\rho_{0,r}}(\cdot,\cdot;\bw^{\ddagger}):\R^r\times\R^r\to\R_+$:
\begin{align*}
    \widetilde{\rho_0}(m_1, m_2;\bw^{\ddagger}) &\equiv \widetilde{\rho_{0,r}}(\Psi_r^*m_1, \Psi_r^*m_2;\bw^{\ddagger})\,. && (\text{Reduced density w.r.t.\ pCN})
\end{align*}
The form of $\widetilde{\rho_{0,r}}$ is given by
\begin{align*}
    \widetilde{\rho_{0,r}}(\bmm_1, \bmm_2) &\coloneqq \exp\Big(-\frac{\triangle t}{8}\norm{(\widetilde{d}_j\delta_{jk})\widetilde{\bP_r}^T\bmm_1 - \widetilde{\bP_r}^T\widetilde{\bg_r}}^2_{\left(\widetilde{d}_j+1)^{-1}\delta_{jk}\right)}\\ 
    &\quad+ \frac{\sqrt{\triangle t}}{2}\widehat{\bmm}^T\left(\widetilde{\bH_r}\bmm_1 -\widetilde{\bg_r}\right) -\frac{1}{2} \norm{\widehat{\bmm}}^2_{\widetilde{\bH_r}}\Big) + \prod_{j=1}^r\left(\widetilde{d}_j+1\right)^{1/2}\,,
\end{align*}
where $\widehat{\bmm} \coloneqq (\bmm_2 - s\bmm_1)/\sqrt{1-s^2}$. Here, the reduced ppg $\widetilde{\bg_r}$, the reduced ppGNH $\widetilde{\bH_r}$, and the ppGNH eigendemposition $(\widetilde{d}_j\delta_{jk}, \widetilde{\bP_r})$ are defined in \cref{eq:surrogate_rppg,eq:surrogate_rppgnh,eq:surrogate_eigendecomposition} and evaluated at $\bmm_1 = \Psi_r^*m_1$ through the trained neural network.

For the DA MCMC introduced in \cref{subsec:da}, only the surrogate data misfit enters the first stage transition rate ratio, and, thus, it can also be reduced to $\R^r$:
\begin{subequations}
\begin{align}
\rho^{(1)}(m_1, m_2;\bw^{\ddagger}) &\equiv \rho_{r}^{(1)}\left(\Psi_r^*m_1, \Psi_r^*m_2;\bw^{\ddagger}\right)\,,\\
    \rho_{r}^{(1)}(\bmm_1, \bmm_2) &\coloneqq \exp\left(\widetilde{\Phi_r^{\by}}(\bmm_1)-\widetilde{\Phi_r^{\by}}(\bmm_2)\right) \frac{\widetilde{\rho_{0,r}}(\bmm_2, \bmm_1)}{\widetilde{\rho_{0,r}}(\bmm_1, \bmm_2)}\,.\label{eq:da_dino_trr_1}
\end{align}
\end{subequations}
When combined with the proposal splitting in \cref{prop:splitting}, the first stage in the DA procedure can be performed entirely in reduced coefficient space $\R^r$, and prior sampling can be avoided until entering the second stage due to \cref{lemm:gauss_rv}. In the second stage, the true data misfit evaluated at the full proposal is required to maintain the posterior sampling consistency of MCMC:
\begin{equation}\label{eq:da_dino_trr_2}
    \rho^{(2)}(m_1, m_2;\bw^{\ddagger}) = \frac{\exp(-\widetilde{\Phi_r^{\by}}(\Psi_r^*m_j))\exp(-\Phi^{\by}(m^{\dagger}))}{\exp(-\widetilde{\Phi_r^{\by}}(\Psi_r^*m^{\dagger}))\exp(-\Phi^{\by}(m_j))}\,.
\end{equation}
We note that \cref{eq:da_dino_trr_1} allows for proposal rejection without true data misfit evaluation nor prior sampling. The additional cost reduction in prior sampling due to our choice of reduced basis architecture can be important, for example, when the prior is defined through Whittle--Mat\'ern Gaussian random fields \citep{whittle1954on} that require solving fractional stochastic PDEs to sample. In \cref{alg:mcmc}, we summarize the procedure for our DA geometric MCMC method via a reduced basis neural operator surrogate.

\SetKwInOut{Global}{Provided}
\SetKwInOut{Known}{Known at $m_j$}
\SetKwComment{Comment}{$\triangleright$\ }{}
\begin{algorithm}[!ht]
    \SetAlgoLined
    \KwIn{(i) a trained neural network $\nn(\cdot, \bw^{\ddagger})$, (ii) a pair of parameter encoder $\Psi_r^*$ and decoder $\Psi_r$, (iii) an observable basis $\bV$, and (iv) a step size $\triangle t$.}
    
    \Known{(i) the data misfit value $\Phi^{\by}(m_j)$, (ii) the surrogate reduced data misfit value $\widetilde{\Phi_r^{\by}}(\bmm_j)$, where $\bmm_{r,j}\coloneqq\Psi_r^*m_j$ (iii) the surrogate reduced ppg $\widetilde{\bg_r}(\bmm_{r,j})$, (iv) the surrogate reduced ppGNH eigendecomposition $\left(\widetilde{\bD_r}(\bmm_{r,j}), \widetilde{\bP_r}(\bmm_{r,j})\right)$.}
    
    \KwOut{The next position $m_{j+1}\in\scrM$.}
    Sample $\bxi_r\iid \mathcal{N}(\bzero, \bI_r)$\;
    Compute $\bmm_r^{\dagger}$ using $\bxi_r$ via \cref{eq:reduced_space_sampling}\Comment*[r]{Reduced proposal sampling}
    Evaluate $\nn$ and $\partial_{\bmm_r}\nn$ at $\bmm_r^{\dagger}$\;
    Evaluate $\widetilde{\Phi_r^{\by}}$, $\widetilde{\bg_r}$, $\widetilde{\bD_r}$, and $\widetilde{\bP_r}$ at $\bmm_r^{\dagger}$ \Comment*[r]{$\R^{r\times r}$ Hermitian eigenvalue problem}
    Compute $\alpha^{(1)} = \min\{1, \rho_{r}^{(1)}(\bmm_{r,j}, \bmm_r^{\dagger})\}$ via \cref{eq:da_dino_trr_1}\;
    \eIf{$\alpha^{(1)}<\xi_1$ where $\xi_1\iid\text{\normalfont\ignorespaces Uniform}([0,1])$}{
            \Return $m_j$\Comment*[r]{First stage rejection}
        }{ 
        Sample prior $m_{\perp}^{\dagger}\iid \mu$\Comment*[r]{Prior sampling in second stage}
        Compute $m^{\dagger}=\Psi_r \bmm_r^{\dagger} + m_{\perp}^{\dagger}-\Psi_r\Psi_r^*m_{\perp}^{\dagger}$\Comment*[r]{Assemble the full proposal}
        Evaluate the data misfit $\Phi^{\by}(m^{\dagger})$\Comment*[r]{Model evaluation in second stage}
        Compute $\alpha^{(2)} = \min\{1, \rho^{(2)}(m_j, m^{\dagger})\}$ via \cref{eq:da_dino_trr_2}\\
        \eIf{$\alpha^{(2)}<\xi_2$ where $\xi_2\iid\text{\normalfont\ignorespaces Uniform}([0,1])$}{
            \Return $m_j$\Comment*[r]{Second stage rejection}
        }{
        \Return $m^{\dagger}$\Comment*[r]{Second stage acceptance}
        }
        }
\caption{Markov chain transition of surrogate-driven DA mMALA at the $j$-th position $m_j\in\scrM$}\label[algorithm]{alg:mcmc}
\end{algorithm}

\section{Numerical examples: Baseline, chain diagnostics, efficiency metrics, and software}\label{sec:results_set_up}
 The proposed DINO-driven geometric MCMC method is studied on two PDE-constrained BIPs in \cref{sec:ndr,sec:hyperelastic}. In this section, we briefly introduce baseline MCMC methods to assess the efficiency of our proposed MCMC method. Then, we specify two diagnostics for assessing the quality of Markov chains for posterior sampling. Next, we introduce two metrics that quantify the relative efficiency of two MCMC methods.

\begin{table}[!ht]
    \centering
    {\renewcommand{\arraystretch}{1.5}
    \begin{tabular}{| c |c | c | c | }\hline 
        \makecell{\bf Posterior\\\bf geometry\\ \bf information} & {\bf Name} & \makecell{\bf Gauss--Newton \\\bf Hessian \cref{eq:ppgh} \\\bf (approximation)} & \bf Gradient \cref{eq:ppg} \\\hline
        \multirow{2}{*}{None}&pCN & \xmark & \xmark  \\\cline{2-4}
        & MALA & \xmark & \cmark \\\hline
         \multirow{2}{*}{Fixed}&LA-pCN  & $\calH(m_{\text{MAP}})$ in \cref{eq:MAP,eq:ppgh} & \xmark \\\cline{2-4}
        &DIS-mMALA & $\widehat{\Psi_r^{\text{DIS}}}\widehat{\boldsymbol{\Lambda}_r^{\text{DIS}}}\widehat{\Psi_r^{\text{DIS}}}^*$ in \cref{eq:dis_gnh}& \cmark \\\hline
        \makecell{Position\\-dependent}&mMALA & $\calH$ in \cref{eq:ppgh} & \cmark\\\hline
        
    \end{tabular}
    }
    \caption{A list of baseline dimension-independent MCMC methods used in our numerical examples.}
    \label{tab:mcmc_list}
\end{table}

\begin{table}[!htbp]
    \centering
        {\renewcommand{\arraystretch}{1.5}
    \begin{tabular}{|c |c | c | c | c |}\hline
     \makecell{\bf Operator learning\\
    \bf objective function} &{\bf Name} & \makecell{\bf Reduced \\\bf bases} & \makecell{\bf Delayed\\\bf acceptance} & \makecell{\bf Reference\\\bf method \cref{eq:ridge}} \\\hline
    \multirow{2}{*}{\makecell{ Conventional\\$L^2_{\mu}(\scrM;\scrY)$} } & NO-mMALA &  \multirow{4}{*}{\makecell{DIS\\ $r=200$}} & \xmark & r-mMALA\\\hhline{~-~--}
    & DA-NO-mMALA  & & \cmark & DA-r-mMALA\\\hhline{--~--}
    \multirow{2}{*}{\makecell{ Derivative-informed\\$H^1_{\mu}(\scrM;\scrY)$}} & \makecell{DINO-mMALA} &  &\xmark & r-mMALA \\\hhline{~-~--}
    & \makecell{DA-DINO-mMALA} & &\cmark & DA-r-mMALA\\\hline
    \end{tabular}
    }
        \caption{A list of operator surrogate-based dimension-independent geometric MCMC methods used in our numerical examples. We additionally introduce two reference methods, r-mMALA and DA-r-mMALA, that isolate the effects of reduced basis architecture in the empirical performance of surrogate-driven MCMC.}
    \label{tab:dino_mcmc_list}
\end{table}

\subsection{Baseline and reference MCMC methods}

A table of baseline MCMC methods is listed in \cref{tab:mcmc_list}. The mMALA method can be deduced from the generic mMALA proposal in \cref{eq:mmala} with $\calK(m) = (\calI_{\scrH_{\mu}} + \calH(m))^{-1}\cpr$ as in \cref{eq:ppgh}. This method is the same as $\infty$-mMALA by \cite{beskos2017geometric,lan2019adaptive}. The DIS-mMALA method uses the DIS approximation of $\mathbb{E}_{M\sim\mu}[\calH(m)]$ in \cref{eq:dis_gnh} as a fixed approximation to $\calH$. This method is the same as DR-$\infty$-mMALA by \cite{lan2019adaptive} with a fixed DIS reduced basis, similar to gpCN by \cite{rudolf2018generalized} except that gpCN does not include ppg, similar to LI-Langevin by \cite{cui2016dimension} except that (i) DIS-mMALA do not require prior covariance factorization and (ii) DIS-mMALA has only one step size parameter. The LA-pCN method \citep{pinski2015algroithms, kim2023hippylibmuq} utilizes the Laplace approximation to the posterior, which requires solving the following deterministic inverse problem for the maximum a posteriori probability (MAP) estimate \citep{dashti2013MAP, VillaPetraGhattas21}, denoted by $m_{\text{MAP}}\in\scrH_{\mu}$, to construct a proposal:
\begin{subequations}
\begin{gather}
    m_{\text{MAP}} \coloneqq \argmin_{m\in\scrM} \left(\Phi^{\by}(m) + \frac{1}{2}\norm{m}^2_{\cpr^{-1}}\right)\,,\quad \mu^{\by}\approx \mathcal{N}(m_{\text{MAP}}, \cpo(m_{\text{MAP}}))\,,\label{eq:MAP}\\
    \mathcal{Q}_{\text{LA-pCN}}(m,\cdot) \coloneqq \mathcal{N}\left(m_{\text{MAP}}-s m, (1-s^2)\cpo(m_{\text{MAP}})\right)\,.
\end{gather}
\end{subequations}

In \cref{tab:dino_mcmc_list}, we provide a list of MCMC methods driven by reduced basis neural operators detailed in \cref{sec:surrogate_mcmc}. The mMALA proposal approximated by both the $L^2_{\mu}$-trained NO and $H^1_{\mu}$-trained DINO with and without the DA procedure is studied in our numerical examples, and their efficiency is compared with the baseline methods. 

When computationally feasible, we complement the numerical results of our proposed methods with those of two reference MCMC methods: r-mMALA (reduced mMALA) and DA-r-MALA (reduced mMALA with delayed acceptance). These reference methods are designed to isolate the effects of the reduced basis architecture of DINO on the performance of the proposed MCMC methods. They are defined via the following sample average approximation to the optimal reduced mapping \cref{eq:optimal_reduced_mapping} of the PtO map and its reduced Jacobian:
\begin{subequations}\label{eq:ridge}
\begin{align}
    \bdmc{G}(m)&\approx\sum_{j=1}^{n_{\text{rm}}} \bdmc{G}\left(\Psi_r\Psi_r^*m + (\calI_{\scrM}-\Psi_r\Psi_r^*)m_j)\right)\,,\quad m_j\iid \mu\,,\label{eq:ridge_pto}\\
    \bJ_r(m)&\approx \sum_{j=1}^{n_{\text{rm}}} \bV^*\sder\bdmc{G}\left(\Psi_r\Psi_r^*m + (\calI_{\scrM}-\Psi_r\Psi_r^*)m_j)\right)\Psi_r^*\,,\quad m_j\iid \mu\,.\label{eq:ridge_jac}
\end{align}
\end{subequations}
The r-mMALA reference method replaces the PtO map and its stochastic derivative in the mMALA proposal using \cref{eq:ridge}. The DA-r-mMALA reference method additionally includes the DA procedure using \cref{eq:ridge_pto}. We take $n_{\text{rm}} = 20$ in \cref{sec:ndr}.

\subsection{Markov chain diagnostics}\label{subsec:diagnostic}
We focus on two diagnostics that help us understand the quality of Markov chains generated by MCMC: the multivariate potential scale reduction factor (MPSRF) and the effective sample size percentage (ESS\%). In this subsection, we assume access to $n_{c}$ number of independent Markov chains generated by the same MCMC method targeting the same posterior. Each chain has $n_{s}$ samples (after burn-in), denoted by $\{\{m_{j,k}\}_{j=1}^{n_s}\}_{k=1}^{n_{c}}$.

\subsubsection{Wasserstein multivariate potential scale reduction factor}\label{subsubsec:mpsrf}
    The MPSRF \citep{brooks1998general} is a diagnostic for the convergence of MCMC. It compares the multi-chain mean of the empirical covariance within each chain, denoted as $\widehat{\calW}_s\in B(\scrM)$, and the empirical covariance across all chains, denoted as $\widehat{\calV}_s\in B(\scrM)$. They are given by
    \begin{subequations}\label{eq:covariance_estimator}
    \begin{align}
        \widehat{\calW}_s &\coloneqq \frac{1}{n_{c}(n_s-1)}\sum_{k=1}^{n_{c}}\sum_{j=1}^{n_s}\left\langle m_{j,k}- \overline{m}_k, \cdot\right\rangle_{\scrM}\left(m_{j,k} - \overline{m}_k\right)\,,\\
        \widehat{\calV}_s &\coloneqq \frac{n_{s}-1}{n_{s}}\widehat{\calW}_s + \frac{n_{c}+1}{n_{c}(n_{c}-1)}\sum_{k=1}^{n_{c}}\left\langle \overline{m_k}- \overline{m}, \cdot\right\rangle_{\scrM}\left(\overline{m_k} - \overline{m}\right)\,,
    \end{align}
    \end{subequations}
    where $\overline{m_k}$ is the mean of samples in each chain labeled by $k=1,\dots, n_{\text{chain}}$ and $\overline{m}$ is the mean of samples in all chains. We expect the difference in $\widehat{\calW}_s$ and $\widehat{\calV}_s\in B(\scrM)$ to be small as the chains become longer. 
    
In this work, we propose to use the 2-Wasserstein distance for Gaussian measures \citep{dowson1982frechet} to compare the distance between the two covariance operators. We refer to this diagnostic as Wasserstein MPSRF:
\begin{align*}
\begin{split}
    \widehat{R}_{w} &= \text{Wass}_2\left(\mathcal{N}(0, \widehat{\calW}_s), \mathcal{N}(0, \widehat{\calV}_s)\right)\\
    &= \text{Tr}_{\scrM}\left(\widehat{\calW}_s + \widehat{\calV}_s -2 (\widehat{\calW}_s^{1/2}\widehat{\calV}_s\widehat{\calW}_s^{1/2})^{1/2}\right)\,. 
\end{split}&&& (\text{Wasserstein MPSRF})
\end{align*}
The faster $\widehat{R}_w$ decays as a function of the chain length, the faster the pool of chains converges to their stationary distribution (i.e., faster mixing time). While the Wasserstein MPSRF is uncommon for MCMC convergence diagnostics, we find it useful for comparing the performance of MCMC methods in function spaces; see \cref{app:step_size} for additional discussions on this diagnostic.

\subsubsection{Effective sample size percentage distribution}\label{subsubsec:ess}
The ESS\% \citep{gelman2014bayesian} is the estimated percentage of independent samples from a pool of Markov chains. When each sample resides in 1D, such a metric is estimated using the autocorrelation function, denoted by $\text{AC}(t, k)$, of samples that are $t$ positions apart within the $k$-th Markov chain:
\begin{subequations}\label{eq:ess}
\begin{align}
    \text{ESS}\% &= \frac{1}{1 + 2\sum_{t=1}^{2n'+1}\text{MAC}(t)}\,, &&& (\text{Effective sample size percentage})\\
    \text{MACT}(t) &= 1 - \cfrac{\widehat{w}_s-\frac{1}{n_{c}}\sum_{k=1}^{n_{c}}\text{AC}(t, k)}{\widehat{v}_s}\,, &&&(\text{Multichain autocorrelation time})
\end{align}
\end{subequations}
where MACT is the multi-chain AC estimate, $\widehat{w}_s$ and $\widehat{v}_s$ is the 1D version of \cref{eq:covariance_estimator}. The index $n'\in\Z_+$ is chosen to be the largest integer so that the sum of MACT evaluated at neighboring positions is positive.

The simulated MCMC samples $m^h_{j,k}$ belong to a discretized function space $\scrM^h$ of dimension $d_m$. Following \cite{beskos2017geometric} and \cite{lan2019adaptive}, we estimate this 1D diagnostic metric for each degree of freedom (DoF) in the discretized space. This leads to a distribution of $d_m$ ESS\% estimates, visualized using a violin plot.

\subsection{Comparing efficiency of MCMC methods} In this subsection, we introduce two metrics that measure the relative efficiency of a pair of MCMC methods: effective sampling speedup and total effective sampling speedup.

\subsubsection{Effective sampling speedup}

The effective sampling speed quantifies the efficiency of an MCMC method regarding its speed of effective sample generation:
\begin{equation}\label{eq:effectiv_sample_speed}
    \text{Effective sampling speed} = \cfrac{\text{Median(ESS\%)}}{\text{Cost of $100$ Markov chain samples}}\,.
\end{equation}
Instead of directly computing this quantity, we use it to compare the relative efficiency of posterior sampling for different pairs of MCMC methods. The relative efficiency is measured by \textit{the effective sampling speedup}, or speedup for short. It is given by the ratio between the effective sampling speed of the two methods.

\subsubsection{Total effective sampling speedup}

Training sample generation is an important part of the total computational cost for DINO-accelerated geometric MCMC methods, especially when the PtO map is expensive to evaluate. The cost of training sample generation is a fixed offline cost while the cost of posterior sampling scales with the number of MCMC samples. To incorporate both the offline training cost and the online MCMC cost, we introduce an efficiency metric called \textit{the total effective sampling speed} as a function of the effective sample size $n_{\text{ess}}$ required for an MCMC run:
\begin{gather}\label{eq:total_speedup}
    \text{Total effective sampling speed}(n_{\text{ess}}) = \cfrac{n_{\text{ess}}}{\text{Total cost}(n_{\text{ess}})}\,,\\
    \text{Total cost}(n_{\text{ess}}) = \text{Offline cost } + \text{ Cost of } 100 \text{ Markov chain samples }\times\cfrac{n_{\text{ess}}}{\text{ Median(ESS\%)}}\,.\nonumber
\end{gather}

For DINO-mMALA and DA-DINO-mMALA, the offline cost is the sum of training sample generation and neural network training cost. The total effective sampling speed converges to the effective sampling speed when the offline cost is negligible compared to the cost of the MCMC run, i.e., $n_{\text{ess}}\to\infty$. Instead of directly computing this quantity, we use it to compare the relative efficiency of posterior sampling for different pairs of MCMC methods. The relative efficiency is measured by \textit{the total effective sampling speed}, or total speedup for short. It is given by the ratio between the total effective sampling speed of the two methods at a given $n_{\text{ess}}$.

\begin{remark}
In our numerical examples, we do not include the computational cost of step size tuning and burn-in in the total cost. We found that these costs can be significantly reduced if the Markov chain initial position is sampled from the Laplace approximation of the posterior instead of the prior. See \cref{app:step_size} for the step size tuning and chain initialization procedure.
\end{remark}

\subsection{Software}

Our numerical examples are implemented through (i) \texttt{FEniCS} \citep{alnaes2015fenics, logg2012automated} for finite element discretization, solves, and symbolic differentiation of PDE residual operators, (ii) \texttt{hIPPYlib} \citep{VillaPetraGhattas2018, VillaPetraGhattas21} for all components related to inverse problems, e.g. the prior, adjoint solves, Laplace approximation, and some baseline MCMC methods (LA-pCN, MALA, and pCN), (iii) \texttt{hIPPYflow} \citep{hippyflow} for reduced basis estimation and training sample generation, and (iv) \texttt{dino} \citep{dino} for derivative-informed operator learning.
\section{Numerical example: Coefficient inversion for a nonlinear diffusion--reaction PDE}\label{sec:ndr}
We consider the following steady-state nonlinear diffusion--reaction equation in the unit square:
\begin{subequations}
    \begin{align*}
    -\nabla\cdot \exp(m(\bx)) \nabla u(\bx) + u(\bx)^3 &= 0\,, && \bx\in(0,1)^2\,;\\
    u(\bx) &= 0\,, && \bx\in\Gamma_{\text{bottom}}\,;\\
    u(\bx) & = 1\,, && \bx\in\Gamma_{\text{top}}\,;\\
    \exp(m(\bx))\nabla u(\bx)\cdot \bn &= 0\,, && \bx\in\Gamma_{\text{left}}\cup\Gamma_{\text{right}}\,;
\end{align*}
\end{subequations}
where $\Gamma_{\text{bottom}}$, $\Gamma_{\text{top}}$,  $\Gamma_{\text{left}}$, and $\Gamma_{\text{right}}$ are the boundaries the unit square. The inverse problem is to invert for the log-coefficient field $m$ given noise-corrupted discrete observations of the PDE state variable $u$ at a set of spatial positions. 

This section is organized as follows. We introduce the prior, the PtO map, and the setting for Bayesian inversion in \cref{subsec:ndr_prior,subsec:ndr_pto,subsec:ndr_bip}. Then, we present the specifications and results on training operator surrogates with conventional $L^2_{\mu}$ operator learning and the derivative-informed $H^1_{\mu}$ operator learning in \cref{subsec:ndr_surrogate}. Next, we showcase and analyze MCMC results in \cref{subsec:ndr_results}. In \cref{subsubsec:ndr_baseline}, we discuss results on the baseline methods listed in \cref{tab:mcmc_list}. In \cref{subsubsec:ndr_proposal}, we discuss results on NO-mMALA and DINO-mMALA listed in \cref{tab:dino_mcmc_list} to understand the quality of surrogate mMALA proposals. In \cref{subsubsec:ndr_da}, we discuss results on DA-NO-mMALA and DA-DINO-mMALA listed in \cref{tab:dino_mcmc_list}.
\subsection{The prior distribution}\label{subsec:ndr_prior}
We consider the following parameter space and prior distribution
\begin{align*}
    \scrM &\coloneqq L^2((0,1)^2)\,, && (\text{Parameter space})\\
    \mu&\coloneqq\mathcal{N}(0, (-\gamma\lap + \delta\calI_{\scrM})^{-2})\,, &&(\text{The prior distribution})
\end{align*}
where $-\lap: H^1((0,1)^2)\to H^1((0,1)^2)'$ is the weak Laplace operator with a Robin boundary condition for eliminating boundary effects \cite[Equation 37]{VillaPetraGhattas21}. The constants $\gamma,\delta\in\R_+$ are set to $\gamma = 0.03$ and $\delta = 3.33$, which approximately leads to a pointwise variance of $9$ and spatial correlation length of $0.1$. We approximate $\scrM$ using a finite element space $\scrM^h$ constructed by linear triangular elements with $1681$ DoFs. A visualization of prior samples is provided in \cref{fig:ndr_samples}.

\subsection{The parameter-to-observable map}\label{subsec:ndr_pto}
We consider a symmetric variational formulation of the PDE problem, and define the following Hilbert spaces following the notation in \cref{subsec:data_generation}:
\begin{align*}
    \scrU &\coloneqq \left\{u\in H^1((0,1)^2)\;\Big\vert\; u|_{\Gamma_t} = 0\wedge u|_{\Gamma_b} = 0\right\}\,; && (\text{State space})\\
    \scrV &\coloneqq \scrU'\,, && (\text{Residual space})
\end{align*}
where $\scrU'$ denotes the dual space of $\scrU$. To enforce the inhomogeneous Dirichlet boundary condition, we decompose the PDE solution $u$ into $u=u_0 + \bx^T\be_2$, where $\be_2 = \begin{bmatrix} 0 & 1 \end{bmatrix}^T$ and $u_0\in\scrU$ is the PDE state with the homogenous Dirichlet boundary condition. 

The residual operator for this PDE problem, denoted as $\calR:\scrU\times\scrM\to\scrV$, can be defined by its action on an arbitrary test function $p\in \scrU$
\begin{equation}\label{eq:ndr_residual}
\begin{aligned}
    \left\langle\calR(u_0, m), p\right\rangle_{\scrU'\times\scrU} &\coloneqq \int_{(0,1)^2} \Big(\exp(m(\bx)) \left(\grad u_0(\bx)+ \be_2\right) \cdot \grad p(\bx) \\
    &\quad+ \left(u_0(\bx) + \bx^T\be_2\right)^3p(\bx)\Big)\dd\bx\,.
\end{aligned}
\end{equation}
The effective PDE solution operator is defined as a nonlinear parameter-to-state map $\mathcal{F}:\scrM\ni m\mapsto u_0\in\scrU$ where $\mathcal{R}(u_0,m) = 0$. We approximate $\scrU$ using a finite element space $\scrU^h$ constructed by quadratic triangular elements with $3362$ DoFs. Evaluating the discretized PDE solution operator involves solving the discretized residual norm minimization problem via the Newton--Raphson method in $\scrU^h$.

We define the observation operator $\bdmc{O}$ using $25$ randomly-sampled discrete interior points $\{\bx_{\text{obs}}^{(j)}\}_{j=1}^{d_y}$ with $d_y = 25$:
\begin{align}\label{eq:observation_operator}
    \bdmc{O}(u_0) = \begin{bmatrix}\displaystyle
        \int_{\bB_\epsilon(\bx^{(1)}_{\text{obs}})} u(\bx)\dd\bx & \dots &\displaystyle\int_{\bB_\epsilon(\bx^{(25)}_{\text{obs}})} u(\bx)\dd\bx
    \end{bmatrix}^T\,, &&& (\text{Observation operator})
\end{align}
where $\bB_{\epsilon}(\bx)\subset(0,1)^2$ is a ball around $\bx$ with a small radius $\epsilon>0$. The PtO map is $\bdmc{G}\coloneqq \bdmc{O}\circ\mathcal{F}$. Samples of the PtO map are visualized in \cref{fig:ndr_samples}.

\begin{figure}[tpb]
    \centering
    \renewcommand{\arraystretch}{0.1}
    \begin{tabular}{c c c}
    \bf Prior samples & \bf PDE solutions & \bf Observables\\
        \includegraphics[width=0.25\linewidth]{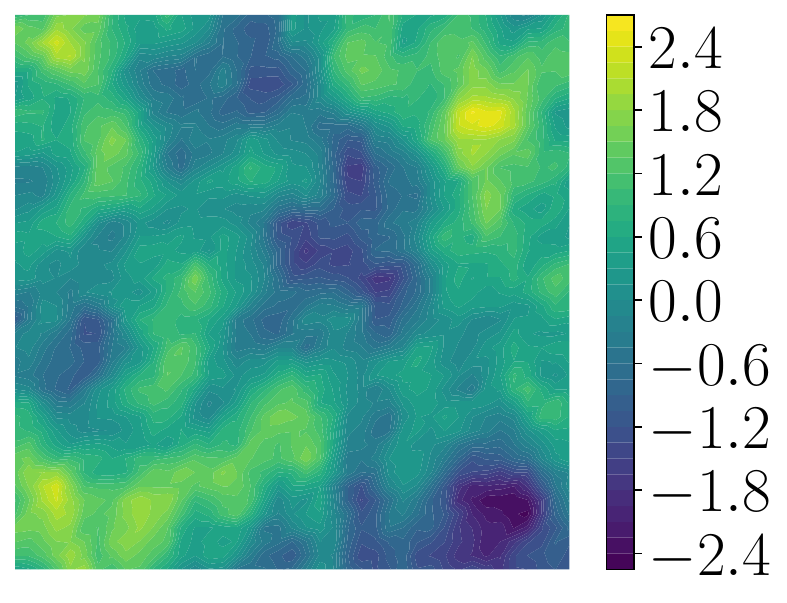}  & \includegraphics[width=0.25\linewidth]{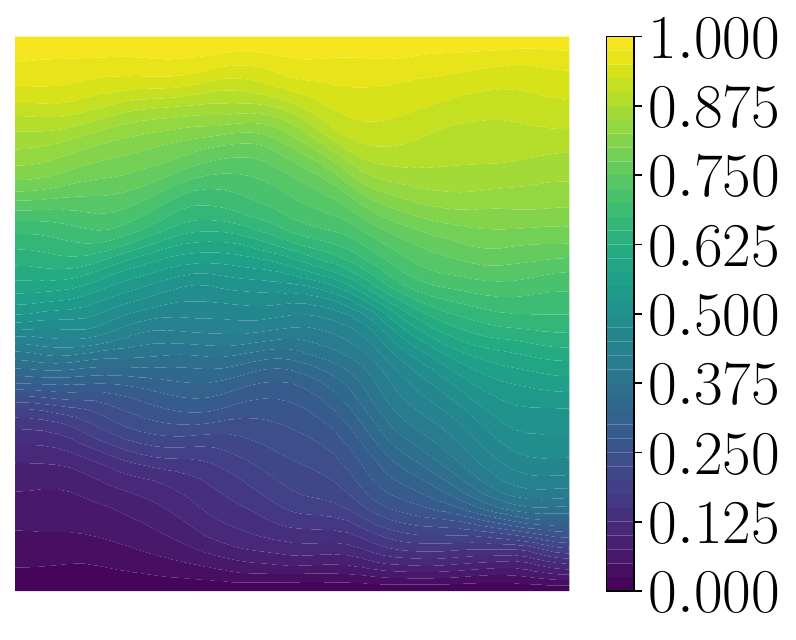} & \includegraphics[width=0.25\linewidth]{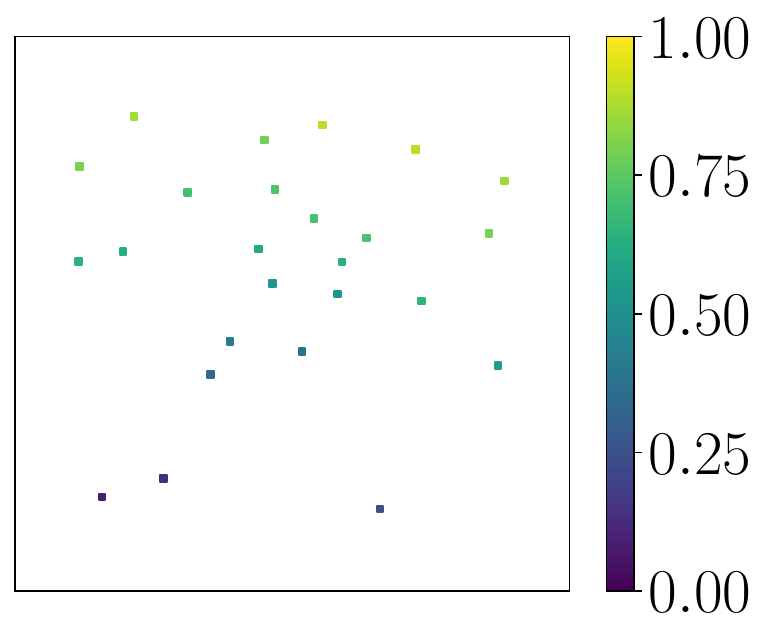} \\
        \includegraphics[width=0.25\linewidth]{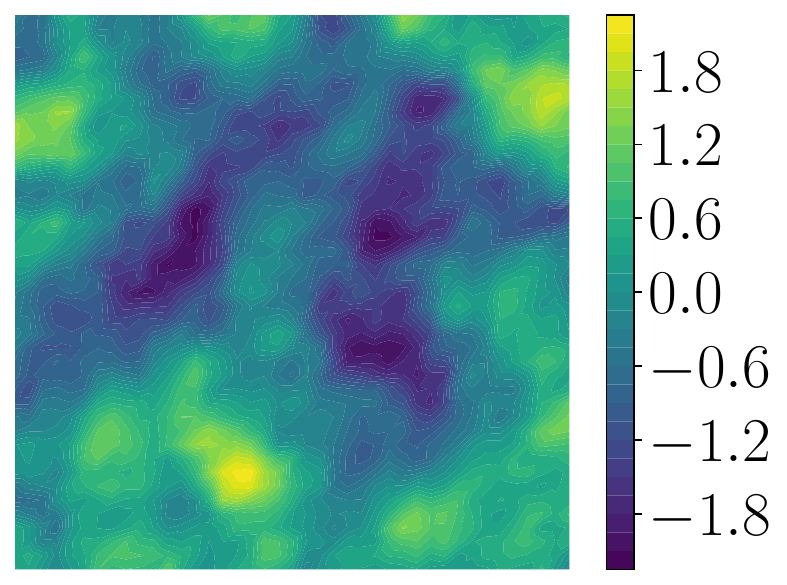} & \includegraphics[width=0.25\linewidth]{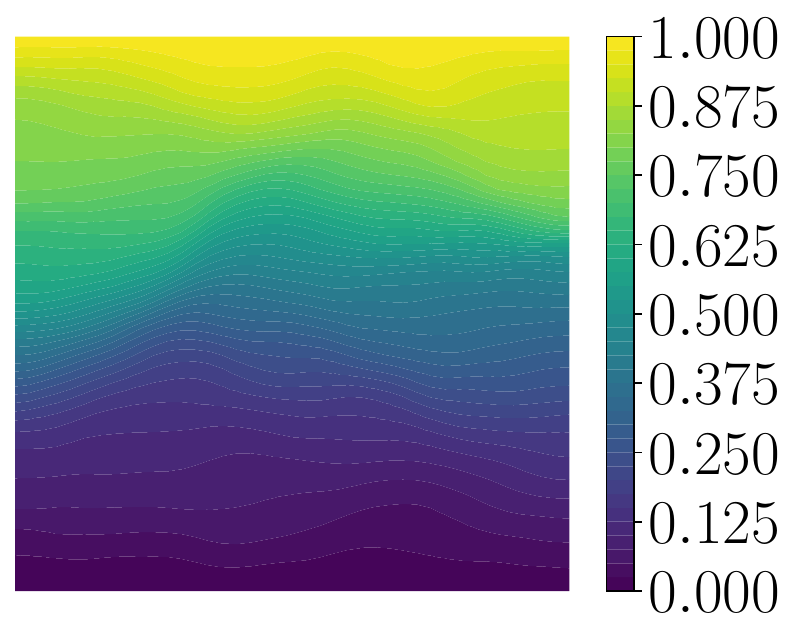} & \includegraphics[width=0.25\linewidth]{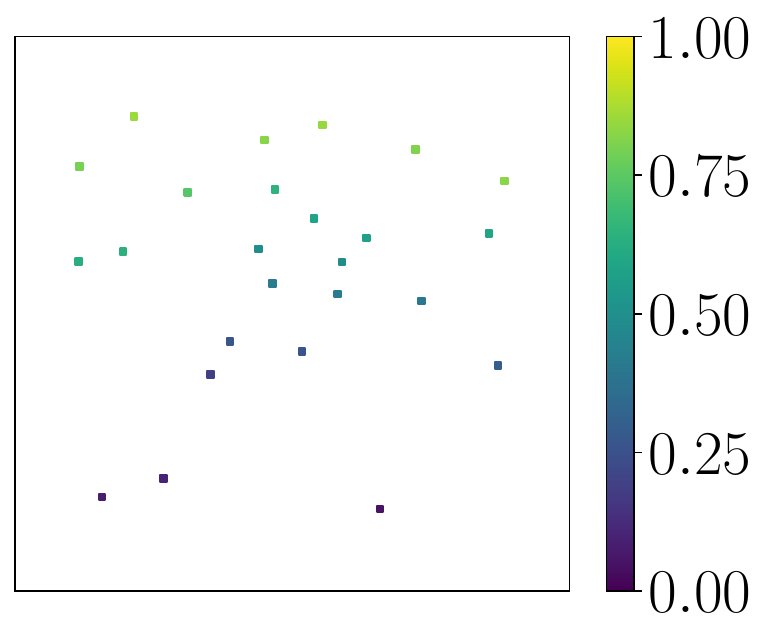} \\
    \end{tabular}
\renewcommand{\arraystretch}{1.0}
    \caption{Visualizations of prior samples ($1681$ DoFs), PDE solutions ($3362$ DoFs), and predicted observables ($\R^{25}$) for coefficient inversion in a nonlinear diffusion--reaction PDE.}
    \label{fig:ndr_samples}
\end{figure}

\subsection{Bayesian inverse problem settings}\label{subsec:ndr_bip}
We generate synthetic data for our BIP using an out-of-distribution piecewise-constant parameter field, following the examples from, e.g., \cite{cui2016dimension,lan2019adaptive}. The model-predicted observable at the synthetic parameter field is then corrupted with $2\%$ additive white noise, which leads to a noise covariance matrix of identity scaled by $v_n=1.7\times 10^{-4}$:
\begin{align*}
    \pi_n=\mathcal{N}(\bzero, v_n\bI)\,. &&(\text{Noise distribution})
\end{align*}
The synthetic data, its generating parameter and PDE solution, and the MAP estimate are visualized in \cref{fig:ndr_settings}.

\begin{figure}[tpb]
    \centering
    \addtolength{\tabcolsep}{-5pt}
    \begin{tabular}{c c c c}
        \makecell{\bf Synthetic Parameter} & \makecell{\bf PDE solution}  & {\bf Observed data} & {\bf MAP estimate} \\
        \includegraphics[width=0.24\linewidth]{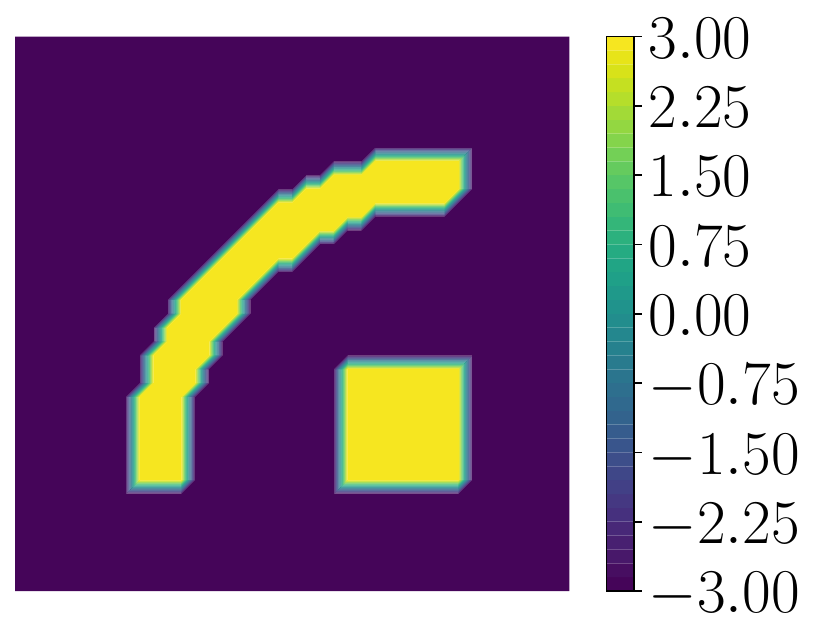} & \includegraphics[width = 0.24\linewidth]{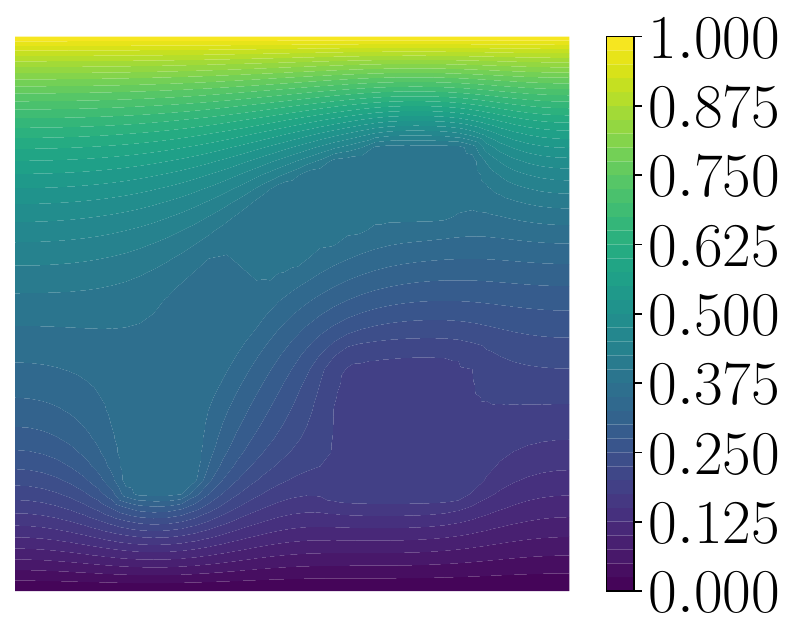} & \includegraphics[width = 0.24\linewidth]{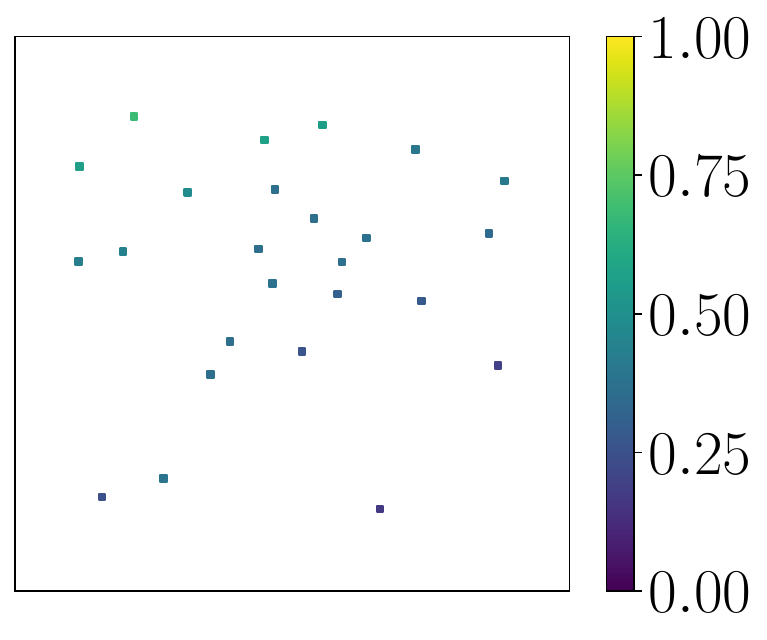} & \includegraphics[width = 0.24\linewidth]{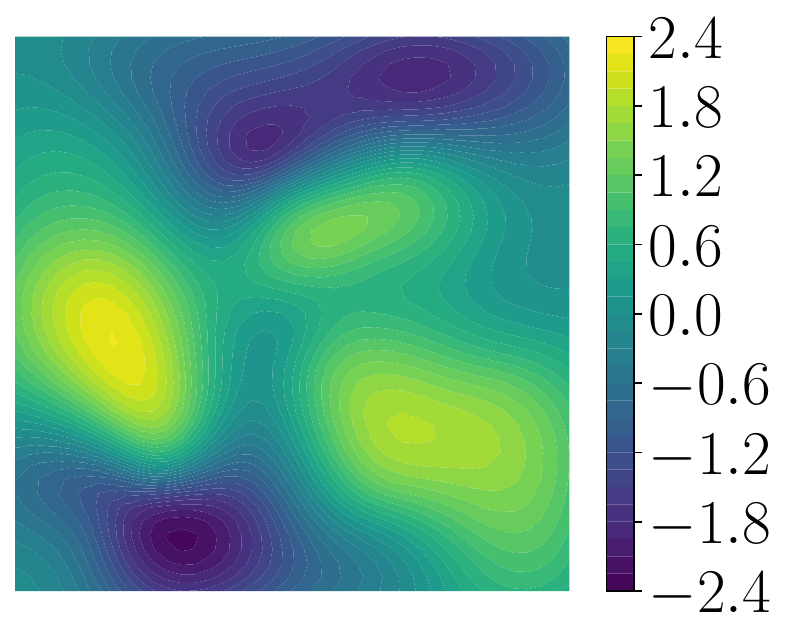} \\
    \end{tabular}
    \addtolength{\tabcolsep}{5pt}
    \caption{Visualization of the BIP setting and the MAP estimate for coefficient inversion in a nonlinear diffusion--reaction PDE.}
    \label{fig:ndr_settings}
\end{figure}

\subsection{Neural operator surrogates}\label{subsec:ndr_surrogate}
We follow the procedure described in \cref{subsec:data_generation} for generating samples for neural network training and testing. Recall that each training sample consists of an i.i.d.\ random parameter field, a model-predicted observable coefficient vector, and a reduced Jacobian matrix. We compute DIS reduced bases of dimension $r=200$ using $n_{\text{DIS}} = 1000$ of the generated samples as specified in \cref{eq:mc_ppgnh}. In particular, the full Jacobian matrix is generated instead of the reduced Jacobian matrix for the first $1000$ samples. Then, a generalized eigenvalue problem is solved to compute the DIS reduced bases \citep{VillaPetraGhattas21, hippyflow}. Selected DIS basis functions are visualized in \cref{fig:ndr_basis}. Forming reduced Jacobian matrices via columns using a direct solver with reused factorization takes $25\%$ of the computing time for solving the nonlinear PDEs using a direct solver, estimated on average over sample generation.

We employ a dense neural network with six hidden layers, each with $400$ hidden neurons and a GELU activation function. The neural network is trained using either the conventional $L^2_{\mu}$ or derivative-informed $H^1_{\mu}$ operator learning objective. For each training method, we use a varying number of training samples $n_t$ for the loss function specified in \cref{eq:empirical_loss_l2,eq:empirical_loss_h1}, with $n_t = 125$, $250$, $\dots$, $16000$. 

The generalization errors of the observable vector prediction and the reduced Jacobian matrix prediction are estimated using $5000$ testing samples. The two types of errors are measured using the relative $L^2_{\mu}$ error defined as follows:
\begin{subequations}\label{eq:generalization_error}
\begin{align}
    \calE_{\text{Obs}}(\bdmc{G}, \widetilde{\bdmc{G}})&\coloneqq \sqrt{\mathbb{E}_{M\sim\mu}\left[\frac{\norm{\bdmc{G}(M)-\widetilde{\bdmc{G}}(M)}^2_{\bC_n^{-1}}}{\norm{\bdmc{G}(M)}^2_{\bC_n^{-1}}}\right]}\,,\\
    \calE_{\text{Jac}}(\bJ_r, \widetilde{\bJ_r}) &\coloneqq \sqrt{\mathbb{E}_{M\sim\mu}\left[\frac{\norm{\bJ_r(M)-\widetilde{\bJ_r}(M)}^2_{F}}{\norm{\bJ_r(M)}^2_{F}}\right]}\,.
\end{align}
\end{subequations}
The generalization accuracy is defined by $(1-\calE)\times 100\%$. In \cref{fig:ndr_accuracy}, we plot the estimated errors as a function of training sample generation cost, measured relative to the averaged cost of one nonlinear PDE solve. For $L^2_{\mu}$-trained neural operators (NOs), we discount the cost of forming reduced Jacobian matrices; thus, its relative cost is the same as the number of training samples. The error plot includes the relative cost of forming reduced Jacobian matrices for derivative-informed $H^1_{\mu}$ operator learning. Additionally, we provide generalization accuracy values on the error plot. 
    \begin{figure}[!h]
        \centering
        \includegraphics[width=0.49\linewidth]{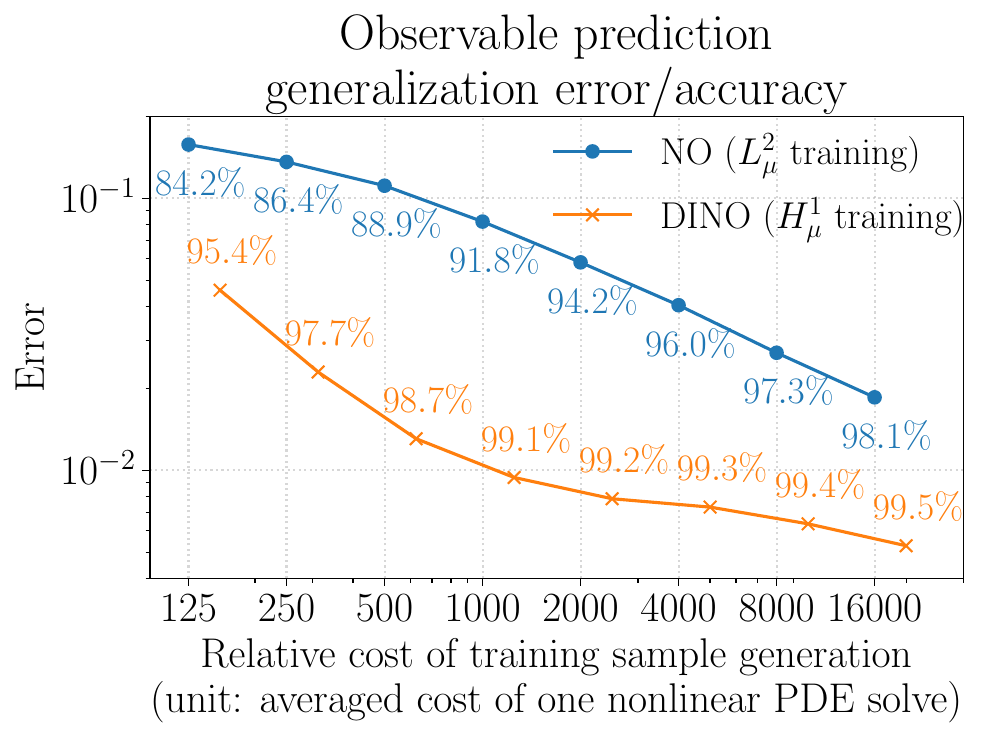}
        \includegraphics[width = 0.49\linewidth]{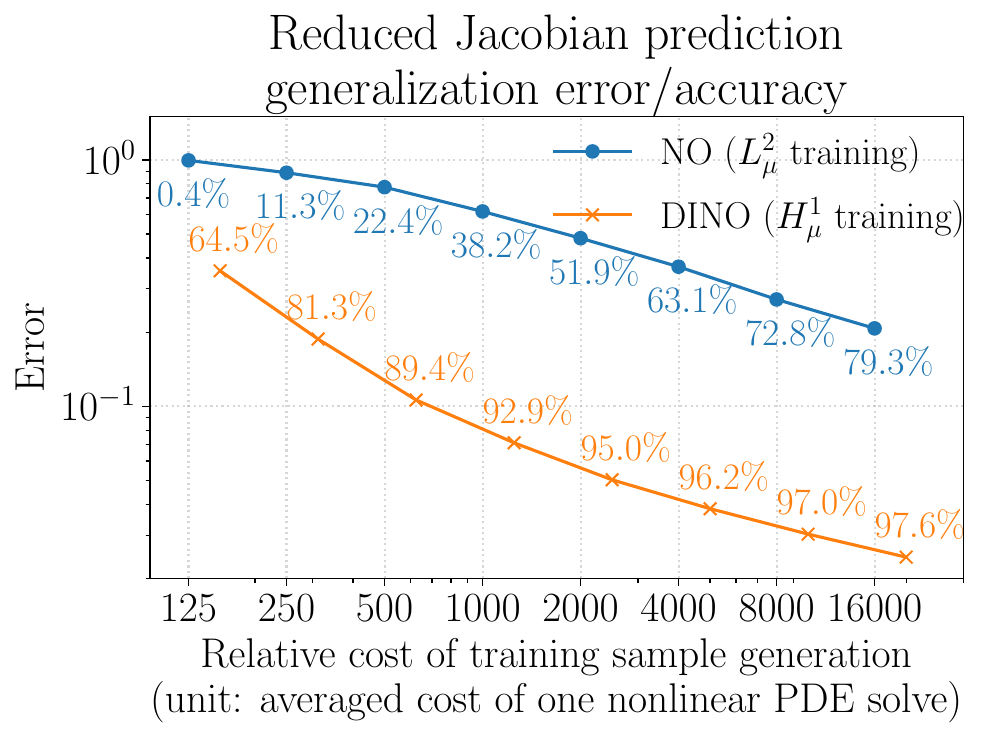}
        \caption{The generalization error and accuracy \cref{eq:generalization_error} for predicting the observable vector and the reduced Jacobian matrix via $L^2_{\mu}$-trained neural operators and $H^1_{\mu}$-trained DINOs for coefficient inversion in a nonlinear diffusion--reaction PDE. The error is plotted as a function of training sample generation cost, measured relative to the averaged cost of one nonlinear PDE solve.}
        \label{fig:ndr_accuracy}
    \end{figure}
    
The plot shows that the derivative-informed $H^1_{\mu}$ operator learning significantly improves the quality of the surrogate at the same training sample generation cost compared to the conventional $L^2_{\mu}$ operator learning. Here is a list of important takeaways from this plot:

\begin{itemize}[leftmargin=*]
    \item To achieve the same generalization accuracy for predicting the observables or the reduced Jacobian matrix, the derivative-informed $H^1_{\mu}$ operator learning is at least 25 times more efficient than the conventional $L^2_{\mu}$ operator learning measured by training sample generation cost.
    \item In the small training sample size regime, e.g., $n_t<1000$, derivative-informed $H^1_{\mu}$ operator learning leads to a much higher generalization error reduction rate for both observable prediction and reduced Jacobian prediction.
    \item To achieve a similar efficiency in posterior sampling compared to mMALA, the operator surrogate needs an estimated $90\%$ generalization accuracy in reduced Jacobian prediction (see \cref{fig:ndr_total_speedup}). The conventional $L^2_{\mu}$ operator learning may at least (estimated via extrapolation) demand the cost of around $116000$ nonlinear PDE solves, while derivative-informed $H^1_{\mu}$ operator learning requires the cost of around $700$ nonlinear PDE solves---two orders of magnitude difference in computational cost. 
\end{itemize} 
These numerical results indicate that the derivative-informed $H^1_{\mu}$ operator learning provides a much superior cost-accuracy trade-off compared to the conventional $L^2_{\mu}$ operator learning. The superiority of derivative-informed learning is more pronounced for large-scale PDE systems since one typically cannot afford to solve these systems at a large number of parameter samples. The superiority of derivative-informed $H^1_{\mu}$ operator learning is decisive when one expects the trained operator to possess an accurate derivative with respect to a high or infinite-dimensional input. 

\subsection{MCMC results}\label{subsec:ndr_results}

We present numerical results on the efficiency of DINO-mMALA and DA-DINO-mMALA methods compared to the baseline and reference MCMC methods. For each method, we collect $n_c = 10$ Markov chains, each with $n_s = 19000$ samples. The step size parameter $\triangle t$ and initialization are chosen by the procedure detailed in \cref{app:step_size}. The statistics of the MCMC runs and posterior visualization are provided in \cref{sec:supplementary}.

\subsubsection{The baseline MCMC methods}\label{subsubsec:ndr_baseline}

The diagnostics for the baseline MCMC methods in \cref{tab:mcmc_list} are visualized in \cref{fig:ndr_baseline}. The diagnostics show that mMALA produces Markov chains with the most effective posterior samples and the fastest mixing speed among the baseline methods. When comparing methods using the same type of posterior geometry information (see \cref{tab:mcmc_list}), MALA is inferior to pCN, and DIS-mMALA is inferior to LA-pCN, even though both MALA and DIS-mMALA include gradient information in their proposal distributions. This result shows the importance of accurate posterior local geometry information (i.e., data misfit Hessians) in MCMC proposal design, as including data misfit gradients alone may compromise the chain quality.

Based on a comparison of the median of ESS\%, mMALA outperformed pCN and LA-pCN, yielding 15.7 and 2.3 times more effective samples, respectively. Yet, each MCMC sample generated by mMALA incurs approximately 2.3 times\footnote{Major contributing factors to the extra cost at each sample $m$ can be roughly decomposed into (1) forming a discretized Jacobian matrix $\bJ^h(m)$ via adjoint solves, (2) solving 
the eigenvalue problem for the operator $\bJ(m)\bJ(m)^T\in\R^{25\times 25}$, and (3) forming the decoder via actions of the prior covariance operator on the encoder.} higher computational costs than pCN and LA-pCN. Consequently, LA-pCN and mMALA achieve equivalent speeds in generating effective posterior samples. The effective sampling speedups of mMALA against other baseline methods are provided in \cref{tab:ndr_cost_per_es}.
\begin{table}[tpb]
    \centering
    \begin{tabular}{|c|c|c|c|c|c|}\hline
        \diagbox[width=10em]{\bf Baseline}{\bf Speedup}    & mMALA & \makecell{DINO-\\mMALA\\$n_t=2000$} & \makecell{DINO-\\mMALA\\$n_t=16000$} & \makecell{DA-DINO-\\mMALA\\$n_t=2000$} & \makecell{DA-DINO-\\mMALA\\$n_t=16000$}    \\\hline
        pCN & 6.8 & 11.9  & 14.3 &26.5 & 59.8 \\\hline
        MALA & 9.4 & 16.5 & 19.6 & 30.6 & 82.7  \\\hline
        LA-pCN & 1 & 1.8 & 2.1 & 3.9 & 8.8 \\\hline
        DIS-mMALA & 5.1 & 8.9  & 10.7 & 19.9 & 44.9 \\\hline
        \makecell{NO-mMALA\\ $n_t=16000$} & 4.8 & 8.4 & 10.1 & 18.7 & 42.2\\\hline
        \makecell{DA-NO-mMALA \\ $n_t=16000$} & 5 & 9 & 10.5 & 19.5 & 44 \\\hline
    \end{tabular}
    \caption{The effective sampling speedup of mMALA, DINO-mMALA, and DA-DINO-mMALA against other baseline MCMC methods for coefficient inversion in a nonlinear diffusion--reaction PDE. The speedup measures the relative speed of generating effective samples for an MCMC method compared against another MCMC method; see \cref{eq:effectiv_sample_speed}.}
    \label{tab:ndr_cost_per_es}
\end{table}

    \begin{figure}[tpb]
        \centering
        \includegraphics[align=c, width=0.4\linewidth]{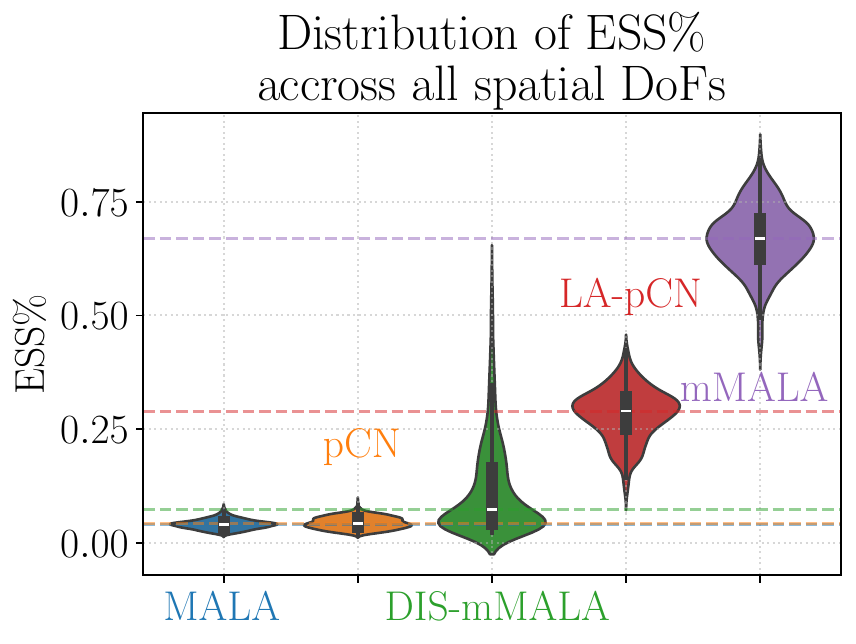}\includegraphics[align=c, width=0.4\linewidth]{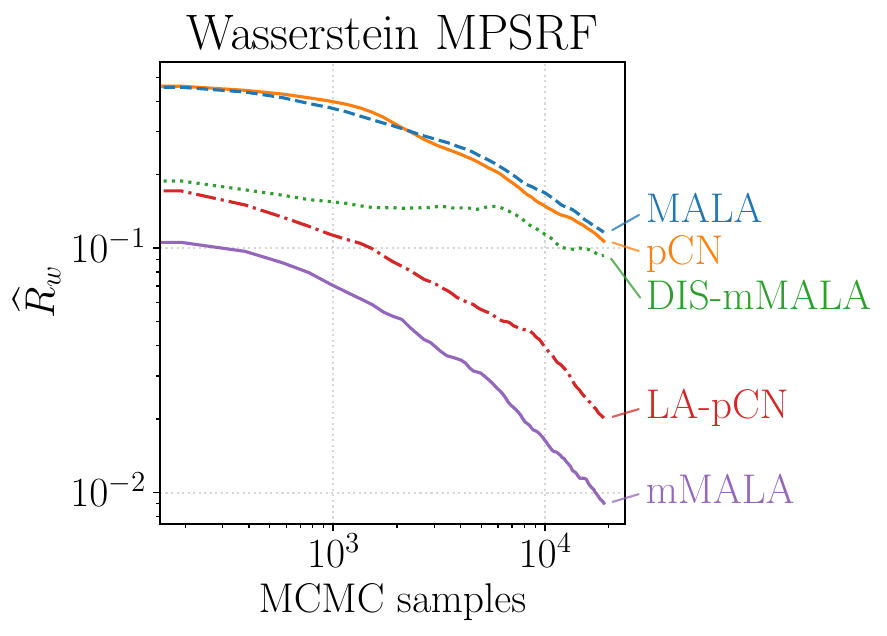}\\
        \caption{Visualization of the diagnostics (see \cref{subsec:diagnostic}) of MCMC chains generated by baseline methods listed in \cref{tab:mcmc_list} for coefficient inversion in a nonlinear diffusion--reaction PDE. (\textit{left}) The violin plot of the ESS\% distributions over $1{,}681$ spatial DoFs for the discretized parameter space. (\textit{left}) The Wasserstein MPSRF as a function of the Markov chain position.}
        \label{fig:ndr_baseline}
    \end{figure}

\subsubsection{Geometric MCMC with surrogate proposals}\label{subsubsec:ndr_proposal}
    For this part of the numerical results, we focus on the quality of the operator surrogate proposal by switching off the DA procedure and using the model-predicted data misfit to compute acceptance probability during MCMC runs. In \cref{fig:dino-mMALA}, we visualize the diagnostics of Markov chains generated by NO-mMALA, DINO-mMALA, r-mMALA, and baseline methods. 
    
    The diagnostics show that DINO-mMALA at $n_t = 2000$ surpasses LA-pCN regarding the effective sample size and mixing speed of MCMC chains. By $n_t=16000$, the ESS\% of DINO-mMALA is close to the baseline mMALA. Measured by the median ESS\%, DINO-mMALA retains $91\%$ of mMALA's effective sample size despite the presence of errors specified in \cref{prop:dis_error}. Estimated by the median ESS\% of r-mMALA, the basis truncation and sampling error accounts for $13\%$ of this reduction in ESS\%. Accounting for the extra computational cost of mMALA at each MCMC sample, DINO-mMALA at $n_t = 16000$ generates effective samples more than twice as fast as mMALA. See the speedups of DINO-mMALA against other methods in \cref{tab:ndr_cost_per_es}. These results suggest that DINO-predicted posterior local geometry is sufficiently accurate for enhancing the trade--off between chain quality and computational cost in geometric MCMC.

    The diagnostics of chains generated by NO-mMALA at $n_t=16000$ implies that MCMC driven by $L^2_{\mu}$-trained NOs lead to low posterior sampling efficiency. While the median ESS\% of NO-mMALA is 1.4 times that of pCN, it is only $11\%$ of the median ESS\% of DINO-mMALA at $n_t=16000$. Recall that $L^2_{\mu}$-trained NO at $n_t=16000$ has an estimated $98.1\%$ and $79.3\%$ generalization accuracy in observable and reduced Jacobian prediction; see \cref{fig:ndr_accuracy}. However, to surpass LA-pCN in terms of median ESS\%, the reduced Jaocbian prediction accuracy needs to be around $90\%$ (i.e., $n_t\approx 1000$ for $H^1_{\mu}$-trained DINO), which would require $L^2_{\mu}$-trained NO at least $n_t = 116000$ to achieve; see comments in \cref{subsec:ndr_surrogate}.
    
        \begin{figure}[tpb]
        \centering
        \includegraphics[align=c, width=0.55\linewidth]{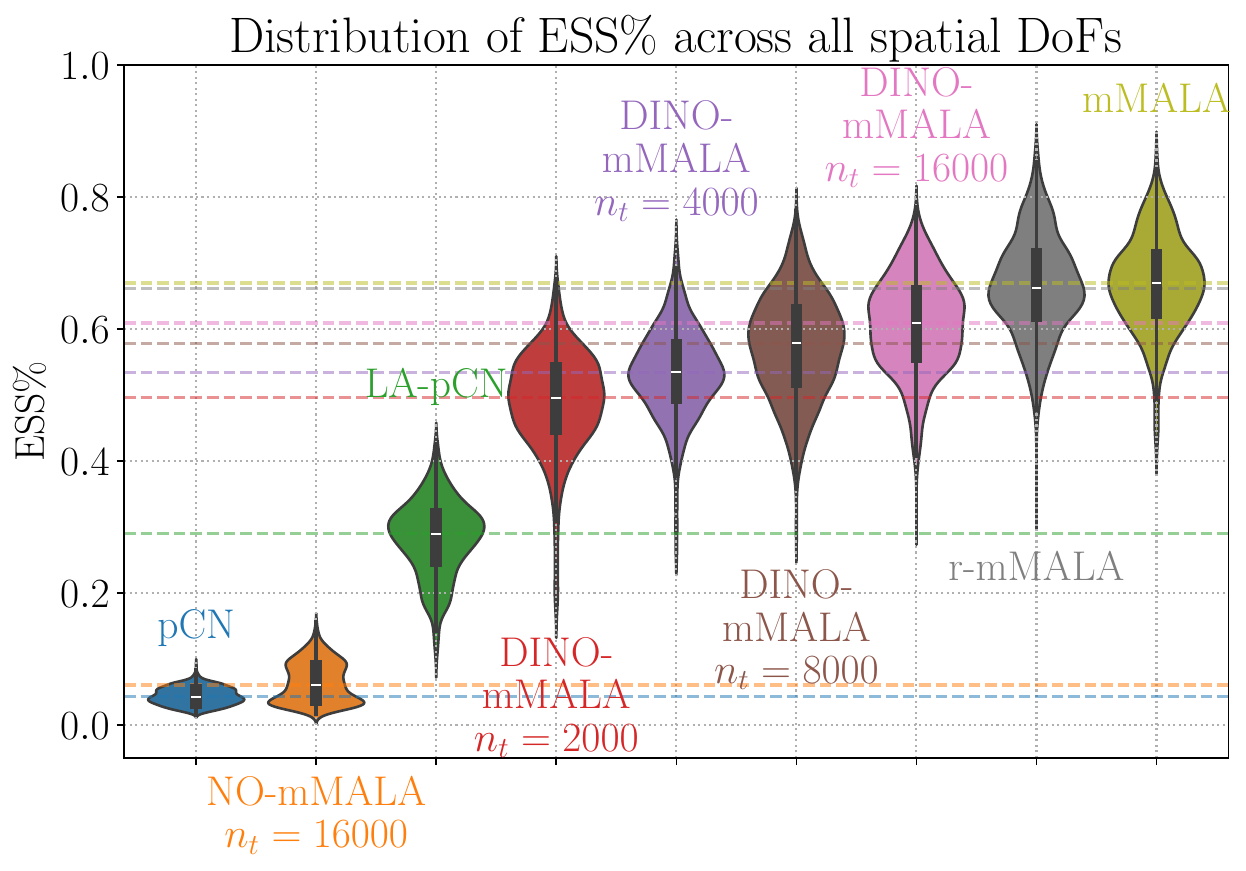}\includegraphics[align=c, width = 0.45\linewidth]{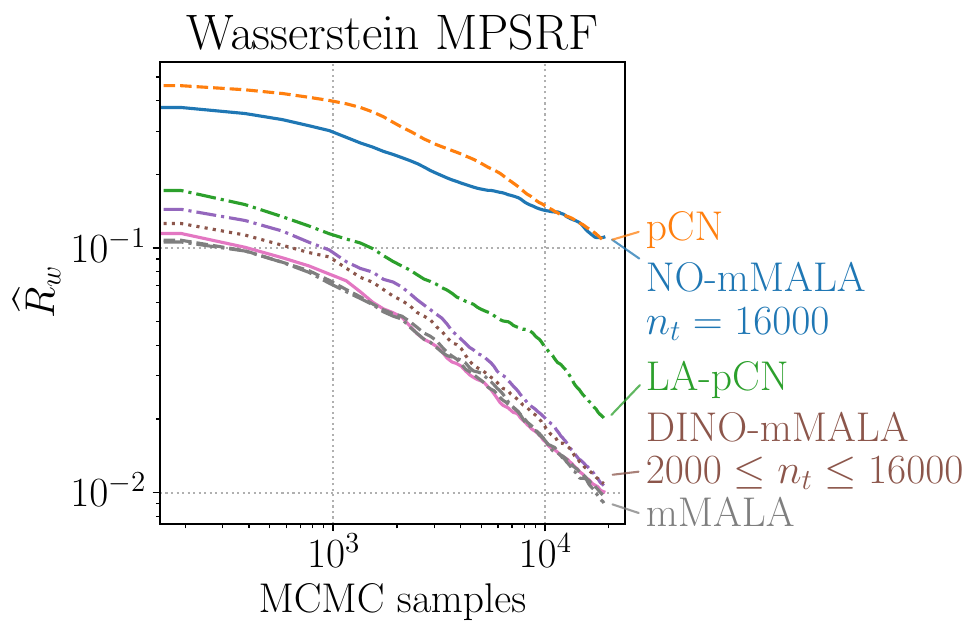}
        \caption{The diagnostics (see \cref{subsec:diagnostic}) of Markov chains generated by DINO-mMALA, NO-mMALA, and other baseline and reference MCMC methods in \cref{tab:mcmc_list,tab:dino_mcmc_list} for coefficient inversion in a nonlinear diffusion--reaction PDE. The symbol $n_t$ denotes the training sample size. (\textit{left}) The ESS\% diagnostic. (\textit{right}) The Wasserstein MPSRF diagnostic.}
        \label{fig:dino-mMALA}
    \end{figure}

\subsubsection{Delayed acceptance geometric MCMC with surrogate proposals}\label{subsubsec:ndr_da}
    In \cref{fig:da-dino-mpsrf} (\textit{left}) and \cref{fig:da-dino-ess}, we visualize the diagnostics of the Markov chains generated by DA geometric MCMC with operator surrogate proposal; see \cref{alg:mcmc}. By $n_t = 2000$, DA-DINO-mMALA outperforms LA-pCN regarding the effective sample size and mixing speed. According to the median ESS\%, DINO-mMALA retains $73\%$ of mMALA's effective sample size. Estimated by the ESS\% of DA-r-mMALA, the basis truncation error accounts for approximately $36\%$ of this reduction in ESS\%. After including the extra computational cost of mMALA at each MCMC sample and the cost reduction of the DA procedure, DA-DINO-mMALA at $n_t = 16000$ generates effective samples around 9 times faster than mMALA.
    
    For MCMC driven by $L^2_{\mu}$-trained NOs, DA-NO-mMALA at $n_t = 16000$ has an effective sample size and a mixing speed similar to pCN. When accounting for the cost reduction of the DA procedure, DA-NO-mMALA at $n_t = 16000$ generates effective samples 1.4 times faster than pCN. However, it is still 5 times slower than LA-pCN and mMALA. Furthermore, it is 19.5 and 44 times slower than DA-DINO-mMALA at $n_t = 2000$ and $16000$. See the speedups of DA-DINO-mMALA and DA-NO-mMALA against other methods in \cref{tab:ndr_cost_per_es}. 

    In \cref{fig:da-dino-mpsrf} (\textit{right}), we visualize the proposal acceptance rate in the first and second stages of the DA procedure for both DA-DINO-mMALA and DA-NO-mMALA. Recall that a low acceptance rate in the first stage leads to a low computational cost, and the acceptance rate in the second stage is correlated to the accuracy of the surrogate approximation. The plot shows that DA-DINO-mMALA has a high second-stage acceptance rate, improving consistently as the training sample size grows. When comparing between $L^2_{\mu}$-trained NO and $H^1_{\mu}$-trained DINO, the second-stage acceptance rate for DA-NO-mMALA is half of the rate for DA-DINO-mMALA, and the first-stage acceptance rate for DA-NO-mMALA is around 3 times the rate for DA-DINO-mMALA.

    In \cref{fig:ndr_total_speedup}, we plot the total effective sampling speedups of DA-DINO-mMALA against DA-NO-mMALA at $n_t=16000$, pCN, LA-pCN and mMALA. Recall from \cref{eq:total_speedup} that the total speedups include the offline cost of surrogate construction, and it is a function of the effective sample size requested for an MCMC run. The total speedups against LA-pCN and mMALA show that if one aims to collect more than just 10 effective posterior samples, it is more cost-efficient to use DA-DINO-mMALA with $n_t=1000$. On the other hand, the asymptotic speedup as $n_{\text{ess}}\to\infty$ (i.e., the effective sampling speedup in \cref{tab:ndr_cost_per_es}) at $n_t=1000$ is relatively small. A better asymptotic speedup requires a better surrogate trained with more samples. At $n_t=16000$, an asymptotic speedup of $8.8$ against LA-pCN and mMALA is achieved, and one needs to collect just 66 effective posterior samples to break even the offline training cost.

    \begin{figure}[tpb]
        \centering
        \includegraphics[align=c,width=0.5\linewidth]{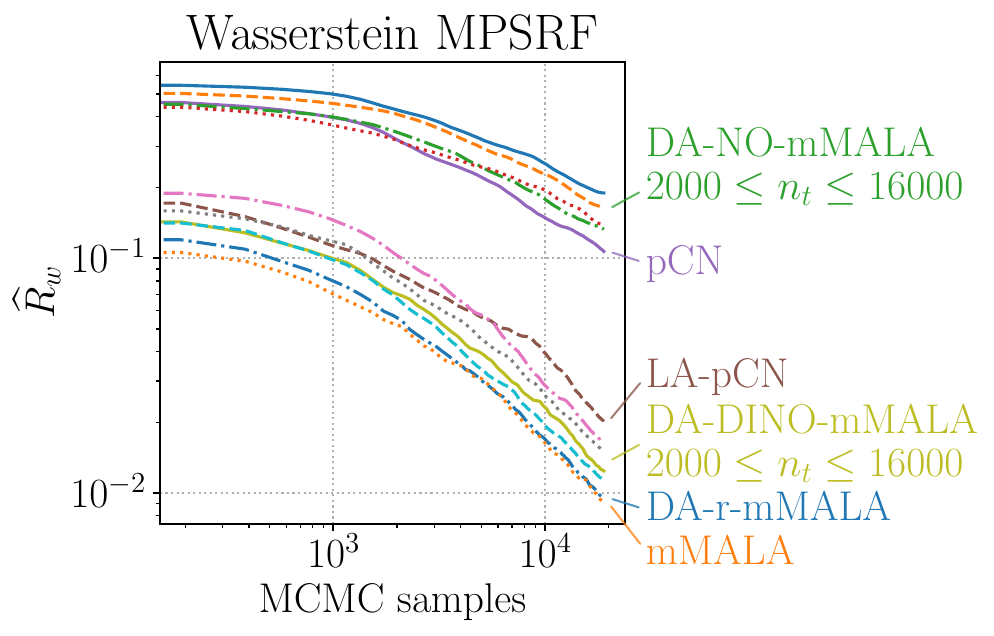}\includegraphics[align=c,width = 0.4\linewidth]{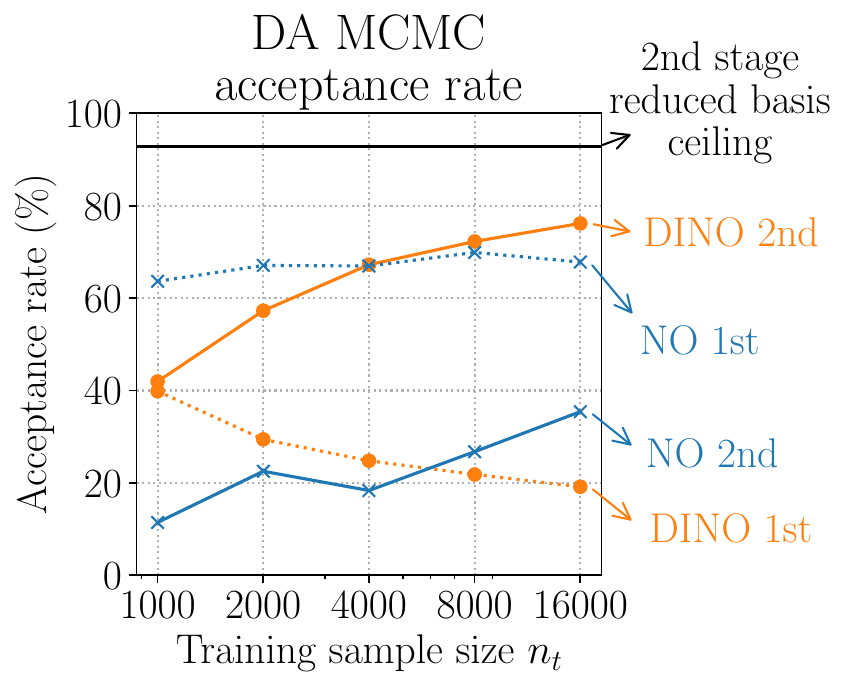}
        \caption{(\textit{left}) The Wasserstein MPSRF diagnostic (see \cref{subsubsec:mpsrf}) of Markov chains generated by DA-DINO-mMALA, DA-NO-mMALA, and other baseline and reference MCMC methods for coefficient inversion in a nonlinear diffusion--reaction PDE. (\textit{right}) The proposal acceptance rate in the first and second stages of the DA procedure as a function of training sample size. The reduced basis ceiling indicates the estimated (via DA-r-mMALA in \cref{tab:dino_mcmc_list}) highest second-stage acceptance rate for the reduced basis architecture with $r=200$ DIS reduced bases.}
        \label{fig:da-dino-mpsrf}
    \end{figure}

    \begin{figure}[tpb]
        \centering
        \includegraphics[width=0.8\linewidth]{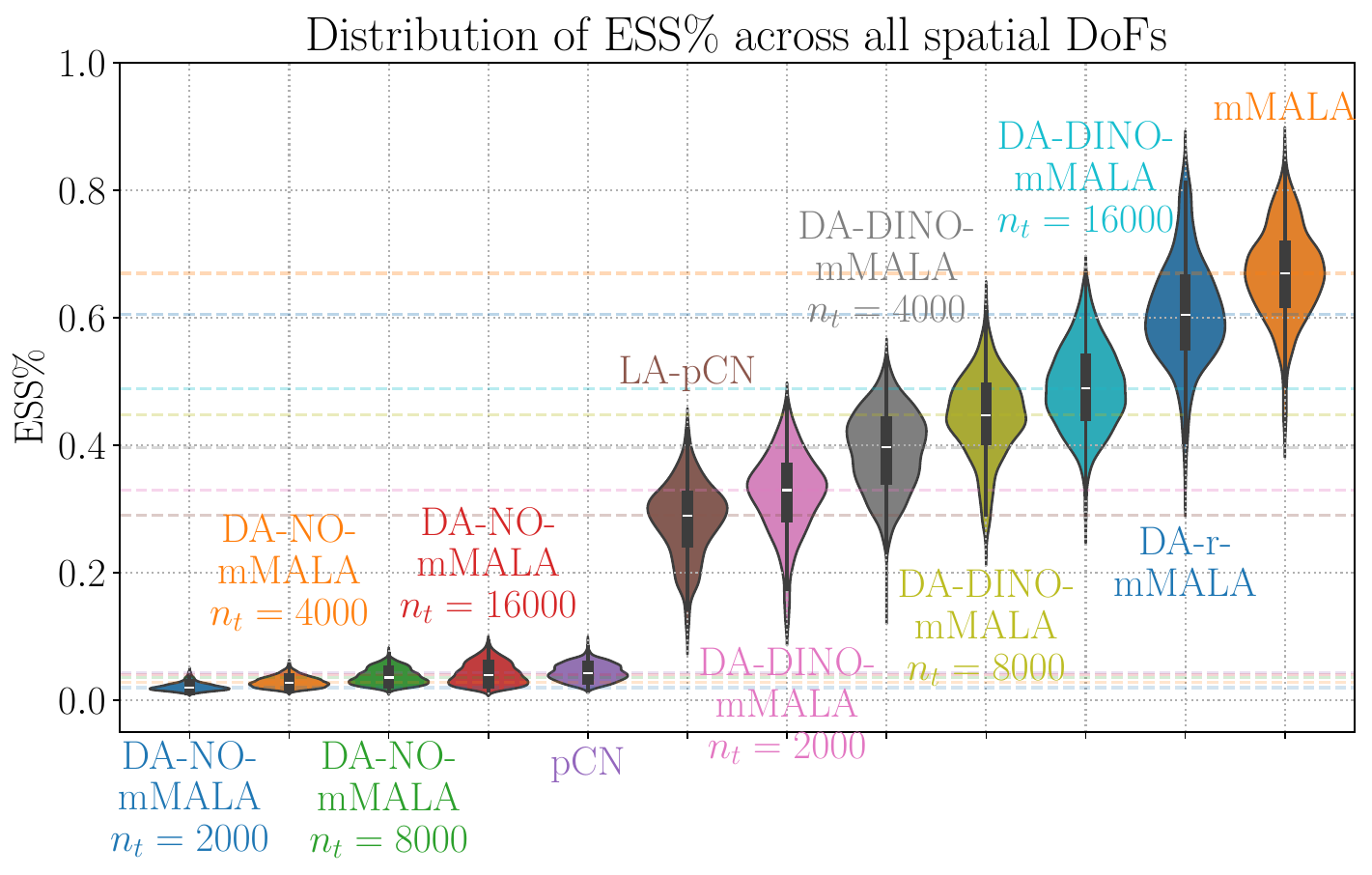}
        \caption{The ESS\% diagnostic (see \cref{subsubsec:ess}) of Markov chains generated by DA-DINO-mMALA, DA-NO-mMALA, and other baseline and reference MCMC methods for coefficient inversion in a nonlinear diffusion--reaction PDE. The symbol $n_t$ denotes the training sample size for operator learning.}
        \label{fig:da-dino-ess}
    \end{figure}

    \begin{figure}[tpb]
        \centering
        \includegraphics[width=0.49\linewidth]{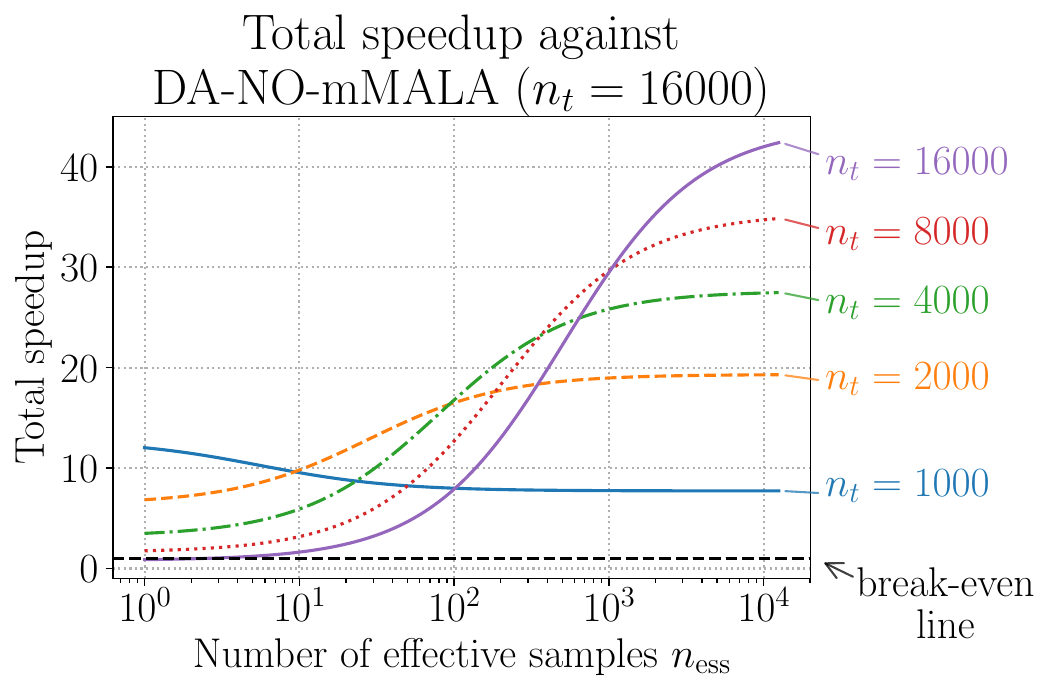}
        \includegraphics[width=0.49\linewidth]{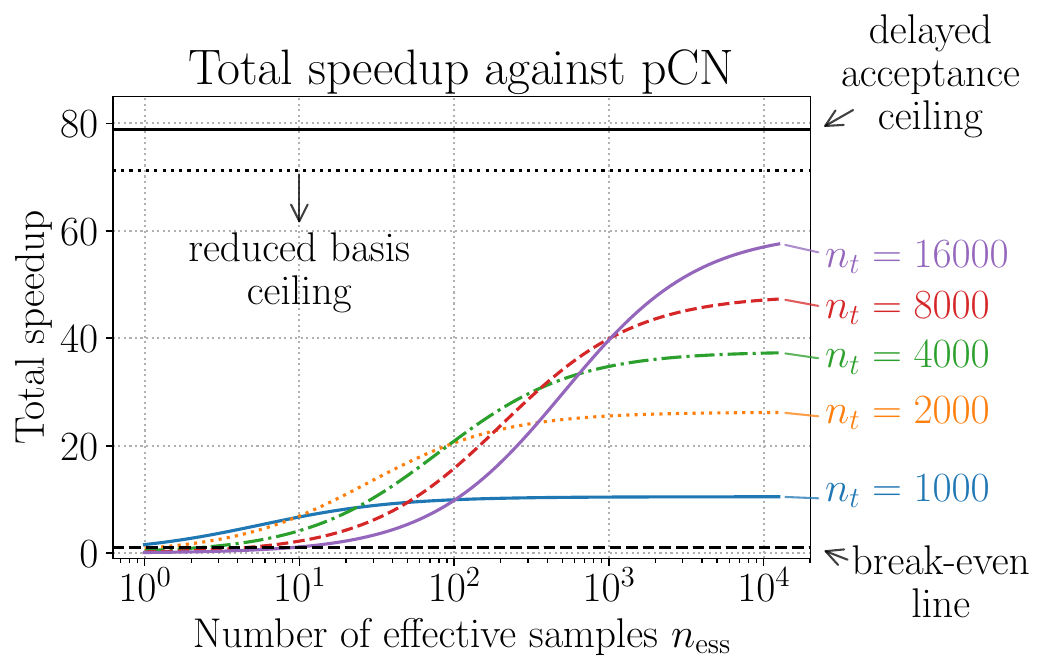}\\
        \includegraphics[width= 0.49\linewidth]{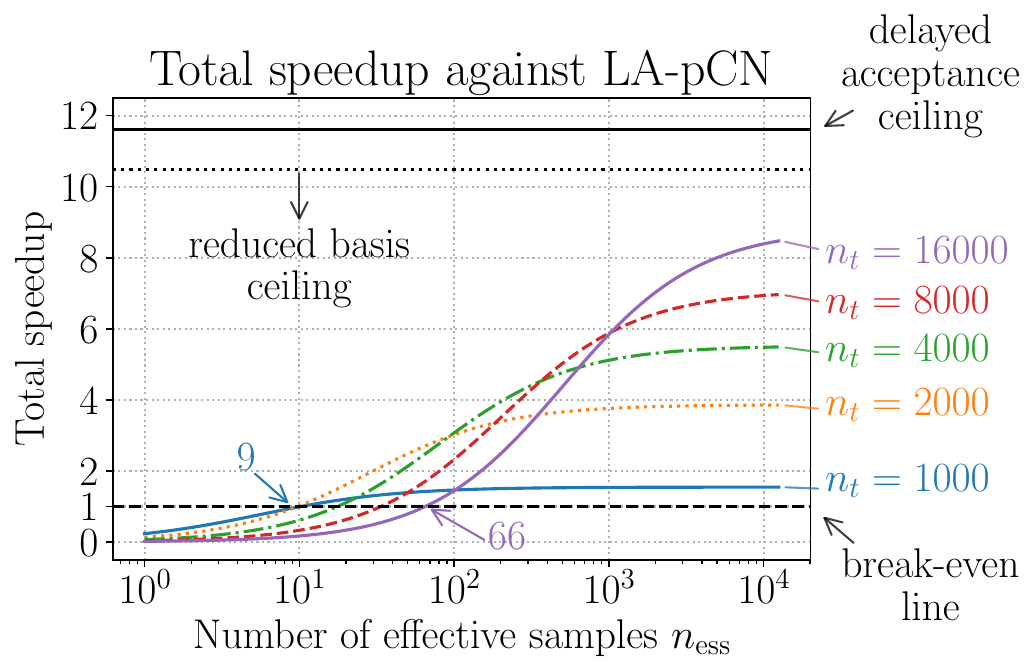}         \includegraphics[width= 0.49\linewidth]{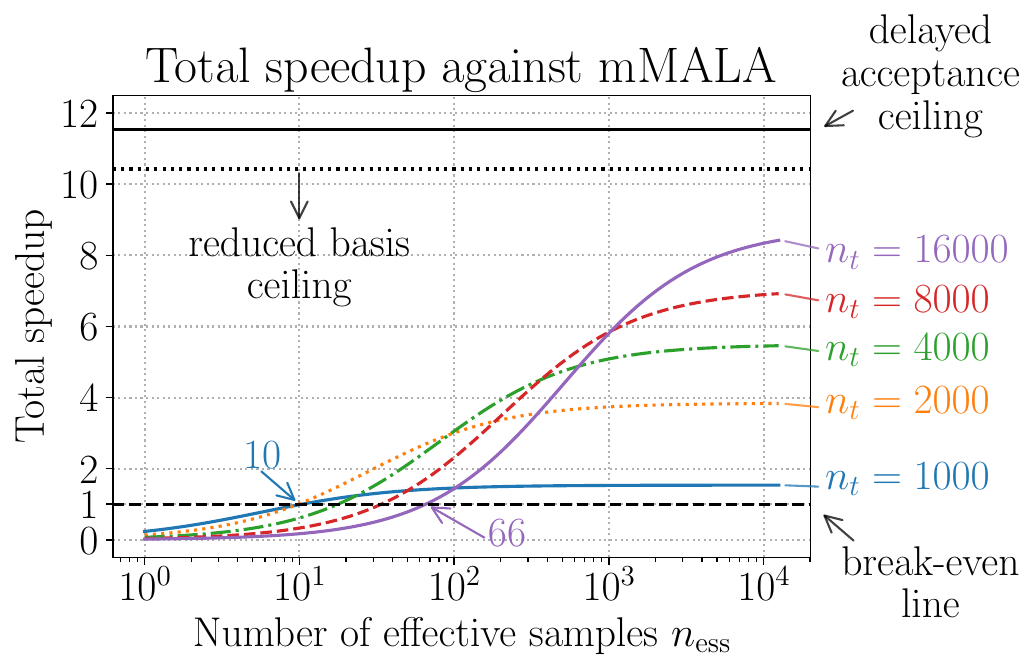}
        \caption{Total effective sampling speedups of DA-DINO-mMALA against DA-NO-mMALA, pCN, LA-pCN, mMALA as a function of effective sample size collected in an MCMC run for coefficient inversion in a nonlinear diffusion--reaction PDE. The total speedup in \cref{eq:total_speedup} compares the relative speed of an MCMC method for generating effective samples when including all computational costs, offline (e.g., training and MAP estimate) and online (MCMC). The break-even line indicates an equal total effective sampling speed of the two MCMC methods. The reduced basis ceiling indicates the estimated (via DA-r-mMALA in \cref{tab:dino_mcmc_list}) optimal speedup for the reduced basis architecture with $r=200$ DIS reduced bases. The delayed acceptance ceiling indicates the estimated asymptotic total speedup when the operator surrogate has no error (i.e., model-evaluated PtO map and reduced Jacobian), which leads to $100\%$ second stage acceptance rate. The symbol $n_t$ denotes the training sample size.}
        \label{fig:ndr_total_speedup}
    \end{figure}

\section{Numerical example: Inference of a heterogeneous hyperelastic material property}\label{sec:hyperelastic}

In this section, we consider an experimental scenario where a rectangular thin film of hyperelastic material is stretched on two opposite edges. The inverse problem aims to recover Young's modulus field characterizing spatially varying material strength from measurements of the material deformation.

This section is organized as follows. We introduce the material deformation model, the prior, the PtO map, and the setting for Bayesian inversion in \cref{subsec:hyperelasticity_model,subsec:hyperelasticity_prior,subsec:hyperelasticity_pto,subsec:hyperelasticity_bip}.  Then, we showcase and analyze MCMC results in \cref{subsec:hyperelasticity_results}. In \cref{subsubsec:hyperelasticity_baseline}, we discuss results on the baseline MCMC methods listed in \cref{tab:mcmc_list}. In \cref{subsubsec:hyperelasticity_da}, we discuss results on DA-NO-mMALA and DA-DINO-mMALA listed in \cref{tab:dino_mcmc_list}.

\subsection{The neo-Hookean model for hyperelastic material deformation}\label{subsec:hyperelasticity_model}

Let $\Omega = (0, 1)\times(0,2)$ be a normalized reference configuration for the hyperelastic material under the thin film approximation. The material coordinates $\bx\in\Omega$ of the reference configuration are mapped to the spatial coordinates $\bx + \bu(\bx)$ of the deformed configuration, where $\bu:\Omega\to\R^2$ is the material displacement. Internal forces are developed as the material deforms. These internal forces depend on the underlying stored internal energy; for a hyperelastic material, the strain energy $\calW_e$ depends on the deformation gradient, i.e.,  $\calW_e=\calW_e(\bF)$ where $\bF = \bI + \nabla \bu$ and $\bI\in\R^{2\times 2}$ is the identity matrix. We consider the neo-Hookean model for the strain energy density \citep{marsden1994mathematical,ogden1997non, gonzalez2008first}:
\begin{equation}
	\calW_e(\bF) = \frac{\mu}{2} (\mathrm{tr}(\bF^T\bF) - 3) + \frac{\lambda}{2} \left(\text{ln\,det}(\bF)\right)^2 - \mu \text{ln\,det}(\bF)\,.
\end{equation}
Here, $\lambda$ and $\mu$ are the Lam\'e parameters which are assumed to be related to Young's modulus of elasticity, $E$, and Poisson ratio, $\nu$, as follows:
\begin{equation}
    \lambda = \frac{E \nu}{(1+\nu)(1-2\nu)}\,, \qquad \mu = \frac{E}{2(1+\nu)}\,.
\end{equation}
We assume prior knowledge of Poisson ratio $\nu = 0.4$, and an epistemically-uncertain and spatially-varying Young's modulus, $E:\Omega\to(E_{\text{min}}, E_{\text{max}})$ with $0<E_{\text{min}}<E_{\text{max}}$. We represent $E$ through a parameter field $m:\Omega\to\R$ as follows
\begin{equation*}
    E(m(\bx)) = \frac{1}{2}\left(E_{\text{max}} - E_{\text{min}}\right)\left(\text{erf}(m(\bx)) + 1\right) + E_{\text{min}}\,,
\end{equation*}
where $\text{erf}:\R\to(-1, 1)$ is the error function.

The first Piola--Kirchhoff stress tensor is given by
\begin{equation}
\bP_e(m, \bF) = 2\frac{\partial\calW_e(m, \bF)}{\partial \bF}\,.
\end{equation}
Assuming a quasi-static model with negligible body forces, the balance of linear momentum leads to the following nonlinear PDE:
\begin{subequations}
\begin{align}\label{eq:linmombal}
	\nabla \cdot \bP_e(m(\bx), \bF(\bx)) &= \bzero\,, && \bx\in \Omega\,;\\
	\bu(\bx) &= \bzero\,, && \bx\in \Gamma_{\text{left}}\,;\\
    \bu(\bx) & = 3/2\,, && \bx\in \Gamma_{\text{right}}\,;\\
	\bP_e(m(\bx), \bF(\bx)) \cdot \bn  &= \bzero\,, && \bx\in \Gamma_{\text{top}}\cup\Gamma_{\text{bottom}};
\end{align}
\end{subequations}
where $\Gamma_t$, $\Gamma_r$, $\Gamma_b$, and $\Gamma_l$ denote the material domain's top, right, bottom, and left boundary. Notice that the stretching is enforced as a Dirichlet boundary condition, and the strain specified on the $\Gamma_{\text{right}}$ is $0.75$.

\subsection{The prior distribution}\label{subsec:hyperelasticity_prior}
The normalized Young's modulus follows a prior distribution defined through a Gaussian random field $M\sim\mu$ with $E_{\text{min}} = 1$ and $E_{\text{max}} = 7$:
\begin{align*}
    \scrM &\coloneqq L^2(\Omega)\,, && (\text{Parameter space})\\
    \mu&\coloneqq\mathcal{N}(0, (-\gamma\grad\cdot\bA\grad + \delta\calI_{\scrM})^{-2})\,, &&(\text{The prior distribution})
\end{align*}
where the differential operator is equipped with a Robin boundary for eliminating boundary effects, and $\bA\in\R^{2\times 2}$ is a symmetric positive definite anisotropic tensor given by
\begin{gather*}
    \bA = \begin{bmatrix}
        \theta_1\sin(\alpha)^2 & (\theta_1-\theta_2)\sin(\alpha)\cos(\alpha)\\
        (\theta_1-\theta_2)\sin(\alpha)\cos(\alpha) & \theta_2\cos(\alpha)^2
    \end{bmatrix}\,,
\end{gather*}
where $\theta_1 = 2$, $\theta_2=1/2$, $\alpha = \arctan(2)$. The constants $\gamma,\delta\in\R_+$ are set to $\gamma = 0.3$ and $\delta = 3.3$, which approximately leads to a pointwise variance of 1 and a spatial correlation of 2 and 1/2 perpendicular and along the left bottom to right top diagonal of the spatial domain. We approximate $\scrM$ using a finite element space $\scrM^h$ constructed by linear triangular finite elements with $2145$ DoFs. Prior samples are visualized in \cref{fig:hyperelasticity_samples}.

\subsection{The parameter-to-observable map}\label{subsec:hyperelasticity_pto}

We consider a symmetric variational formulation for hyperelastic material deformation, and define the following Hilbert spaces following the notation in \cref{subsec:data_generation}:
\begin{align*}
    \scrU &\coloneqq \left\{\bu\in H^1(\Omega;\R^2)\;\Big\vert\; \bu|_{\Gamma_l} = \bzero\wedge \bu|_{\Gamma_r} = \bzero\right\}\,; && (\text{State space})\\
    \scrV &\coloneqq \scrU'\,, && (\text{Residual space})
\end{align*}
where $\scrU'$ denotes the dual space of $\scrU$. To enforce the inhomogeneous Dirichlet boundary condition, we decompose the displacement $\bu$ into $\bu=\bu_0 + \bB\bx$, where $\bB = \begin{bmatrix} 3/4 & 0 \\ 0 & 1 \end{bmatrix}$ and $\bu_0\in\scrU$ is the PDE state with the homogenous Dirichlet boundary condition. 

The residual operator $\calR:\scrU\times\scrM\to\scrV$ is defined by its action on an arbitrary test function $\bp\in \scrU$:
\begin{align}\label{eq:hyper_residual}
    \left\langle\calR(\bu_0, m), p\right\rangle_{\scrU'\times\scrU} \coloneqq \int_{\Omega} \bP_e\left(m(\bx), \bI + \grad \bu_0(\bx) + \bB \right)\grad \bp(\bx)\dd\bx\,.
\end{align}
The effective PDE solution operator $\mathcal{F}:\scrM\ni m\mapsto \bu_0\in\scrU$ satisfies $\mathcal{R}(\mathcal{F}(m),m) = 0$. We approximate $\scrU$ using a finite element space $\scrU^h$ constructed by quadratic triangular elements with $16770$ DoFs. Evaluating the discretized PDE solution operator involves solving the discretized residual norm minimization problem via the Newton--Raphson method in $\scrU^h$.

We define a observation operator $\bdmc{O}\in\scrU\to \R^{64}$ using $32$ equally spaced discrete interior points $\{\bx_{\text{obs}}^{(j)}\}_{j=1}^{32}$ similar to \cref{eq:observation_operator}.
The PtO map is $\bdmc{G}\coloneqq \bdmc{O}\circ\mathcal{F}$. We visualize the output of the PtO map in \cref{fig:hyperelasticity_samples}.

\begin{figure}
    \centering
    \renewcommand{\arraystretch}{0.1}
    \addtolength{\tabcolsep}{-5pt}
    \begin{tabular}{c c c c}
    \bf Prior samples & \bf Young's modulus & \bf Deformed configurations & \makecell{\bf Observables\\ (2-norm)}\\
        \includegraphics[width=0.2\linewidth]{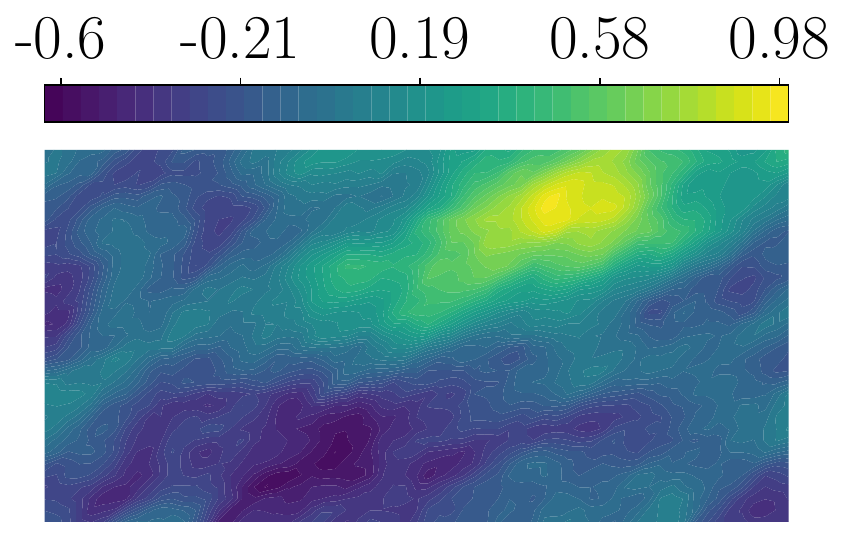}  & 
        \includegraphics[width=0.2\linewidth]{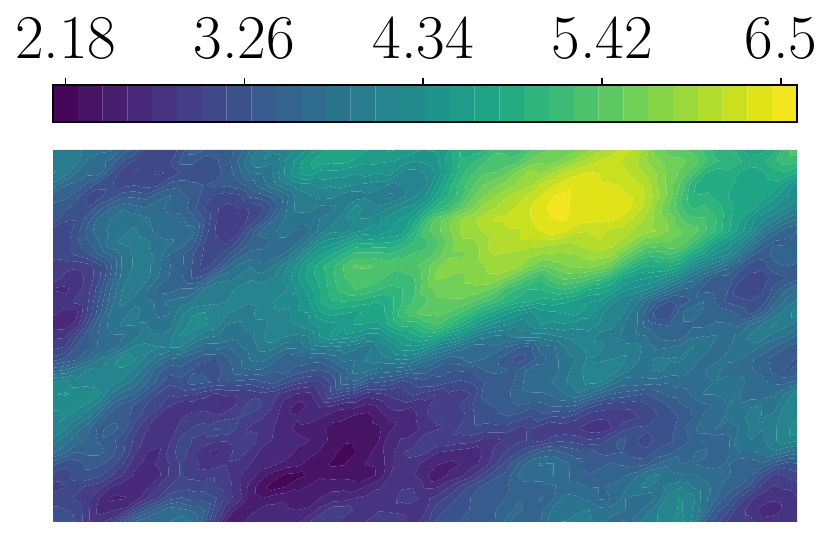} &\includegraphics[width=0.3\linewidth]{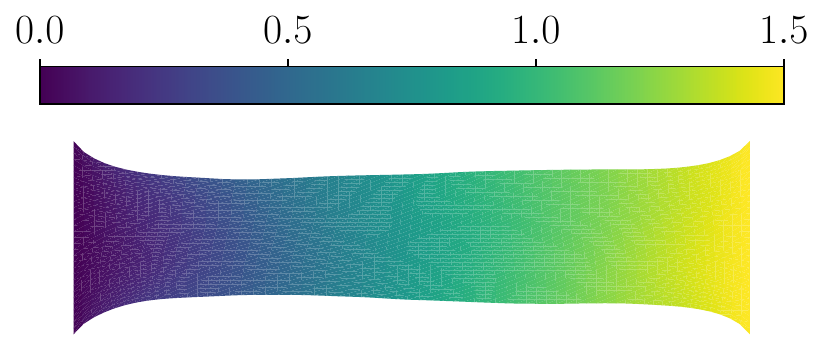} & \includegraphics[width=0.2\linewidth]{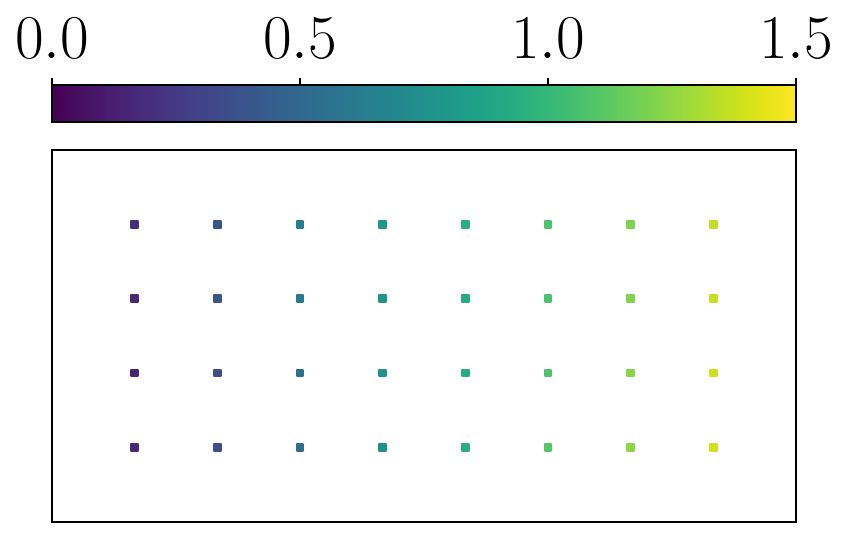} \\
        \includegraphics[width=0.2\linewidth]{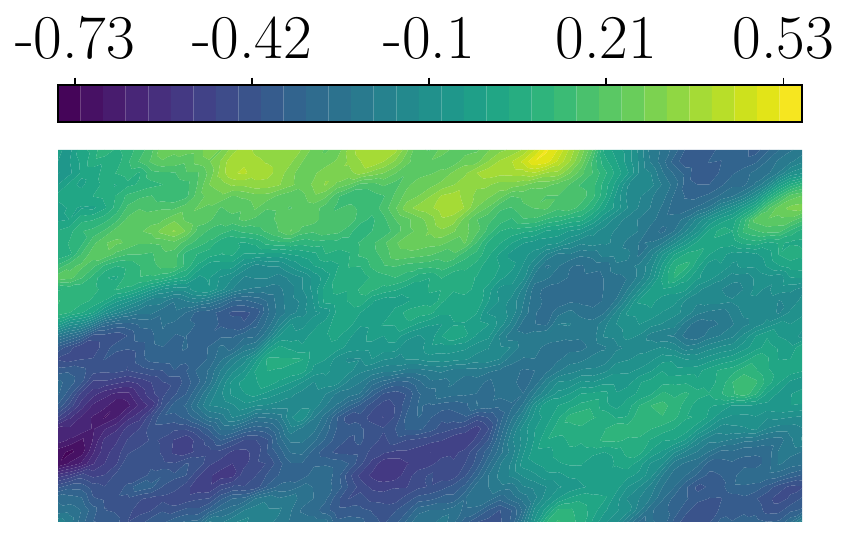} & 
        \includegraphics[width=0.2\linewidth]{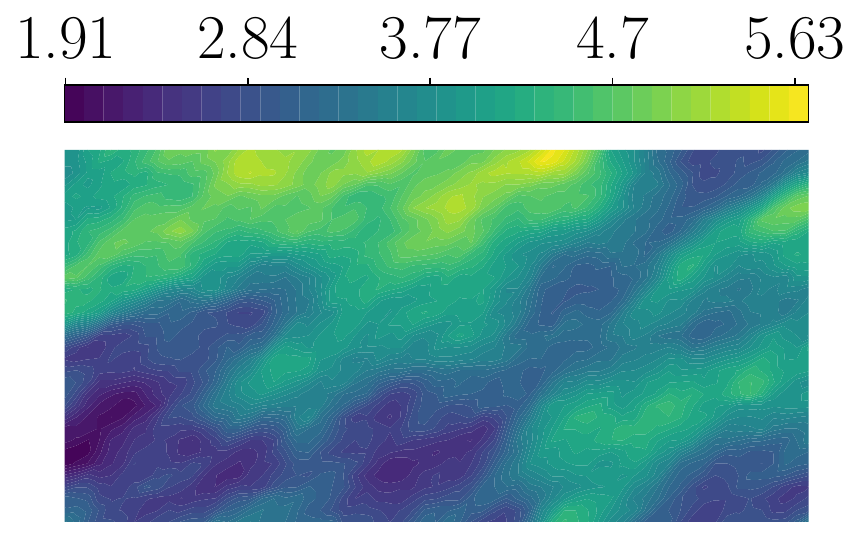} &\includegraphics[width=0.3\linewidth]{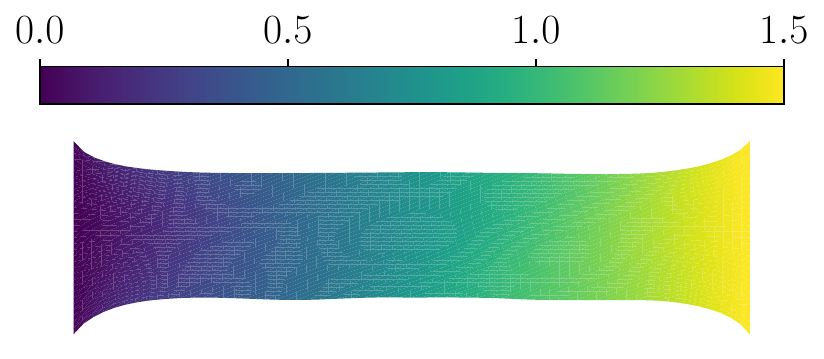} & \includegraphics[width=0.2\linewidth]{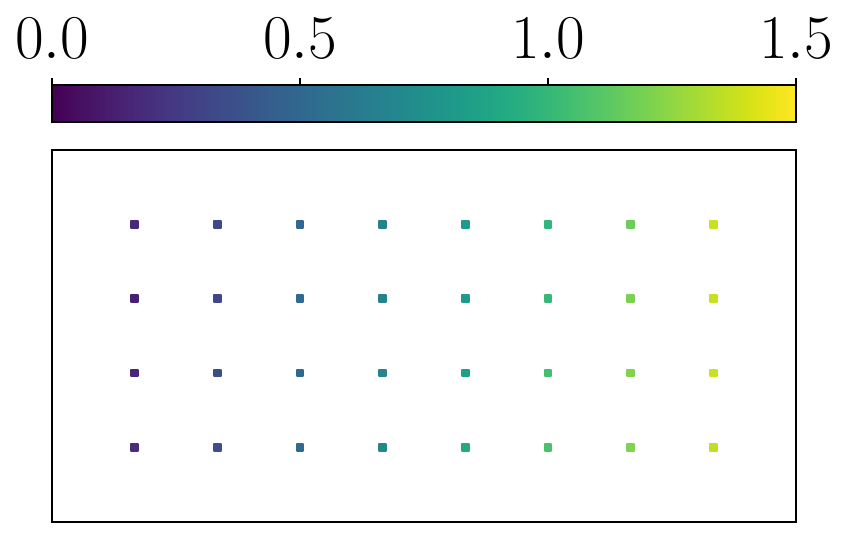} \\
    \end{tabular}
    \addtolength{\tabcolsep}{5pt}
\renewcommand{\arraystretch}{1.0}
    \caption{Visualizations of prior samples ($2145$ DoFs), deformed configuration ($16770$ DoFs), and predicted observables (in $\R^{64}$) for hyperelastic material property discovery.}
    \label{fig:hyperelasticity_samples}
\end{figure}

\subsection{Bayesian inverse problem settings}\label{subsec:hyperelasticity_bip}

We generate synthetic data for our BIP using a prior sample. The model-predicted observables at the synthetic parameter field are corrupted with $1\%$ additive white noise, which has a noise covariance matrix of identity scaled by $v_n=1.8\times 10^{-4}$. The synthetic data, its generating parameter and PDE solution, and the MAP estimate are visualized in \cref{fig:hyperelasticity_settings}.

\begin{figure}[tpb]
    \centering
    \addtolength{\tabcolsep}{-5pt}
    \begin{tabular}{c c c c}
        \makecell{\bf Synthetic Parameter} & \makecell{\bf Deformed configuration}  & \makecell{\bf Observed data\\ (2-norm)}  & {\bf MAP estimate} \\
        \includegraphics[width=0.22\linewidth]{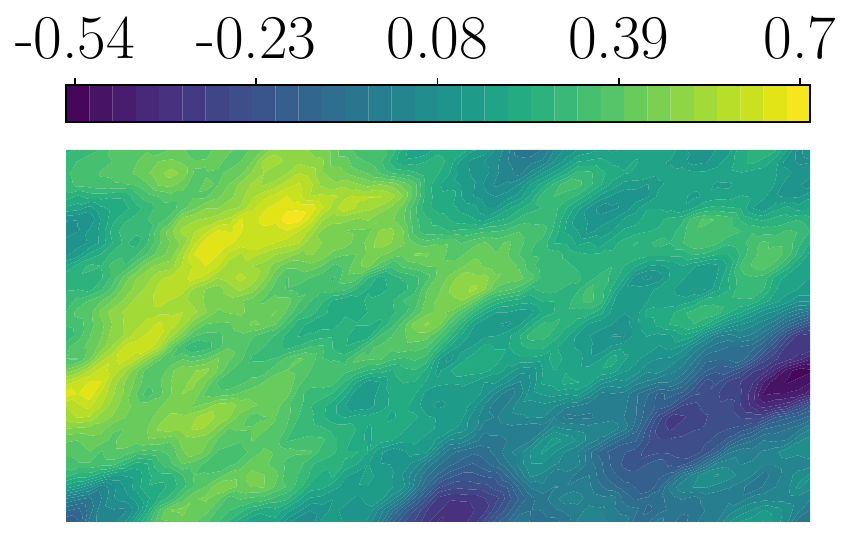} & \includegraphics[width = 0.3\linewidth]{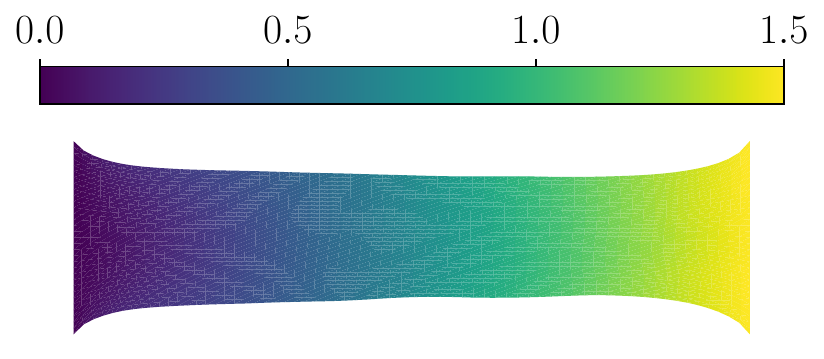} & \includegraphics[width = 0.22\linewidth]{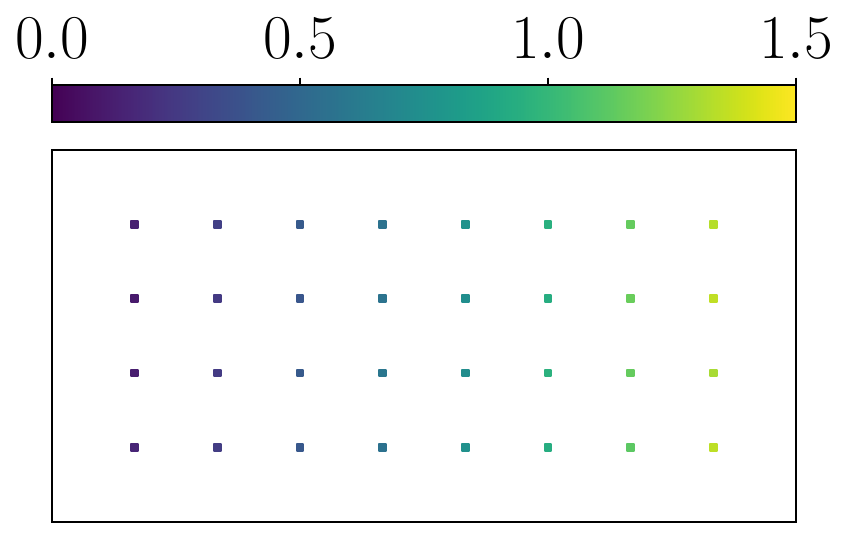} & \includegraphics[width = 0.22\linewidth]{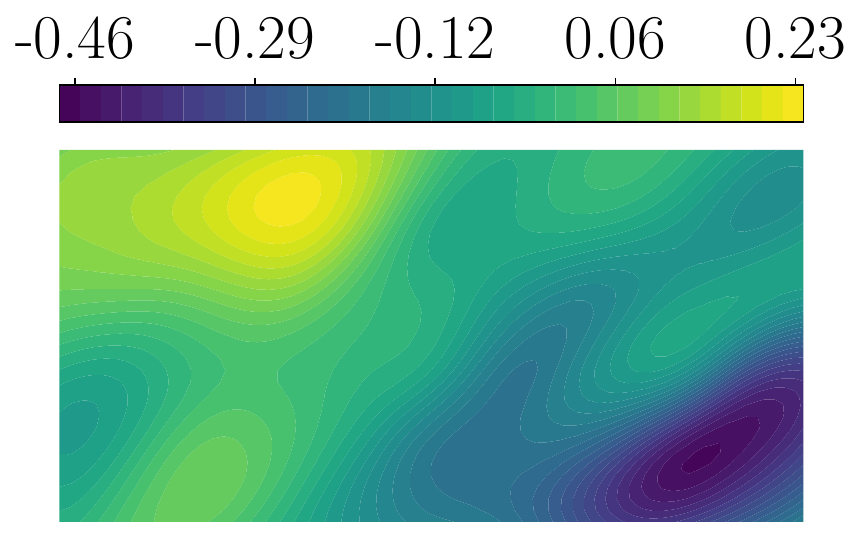} \\
    \end{tabular}
    \addtolength{\tabcolsep}{5pt}
    \caption{Visualization of the BIP setting and the MAP estimate for hyperelastic material property discovery.}
    \label{fig:hyperelasticity_settings}
\end{figure}

\subsection{Neural operator surrogates}\label{subsec:hyperelasticity_surrogate}

We follow the procedure described in \cref{subsec:data_generation} for generating samples for neural network training and testing. We compute DIS reduced bases of dimension $r=200$ using $n_{\text{DIS}} = 500$ of the generated samples as specified in \cref{eq:mc_ppgnh}. Selected DIS basis functions are visualized in \cref{fig:hyper_basis}. Forming reduced Jacobian matrices via columns using a direct solver with reused factorization takes $10\%$ of the computing time for solving the nonlinear PDEs using a direct solver, estimated on average over sample generation. Note that the relative cost of forming reduced Jacobian is low mainly because a large number of Newton--Raphson iterations are needed to solve the PDE.

We use a dense neural network architecture with six hidden layers, each with $400$ hidden neurons and a GELU activation function, trained using $n_t = 125$, $250$, $\dots$, $8000$ samples. We estimate the generalization errors of the trained neural networks using $2500$ testing samples. In \cref{fig:hyperelasticity_accuracy}, we plot the estimated errors as a function of training sample generation cost, measured relative to the averaged cost of one nonlinear PDE solve.

\begin{figure}[tpb]
    \centering
    \includegraphics[width = 0.49\linewidth]{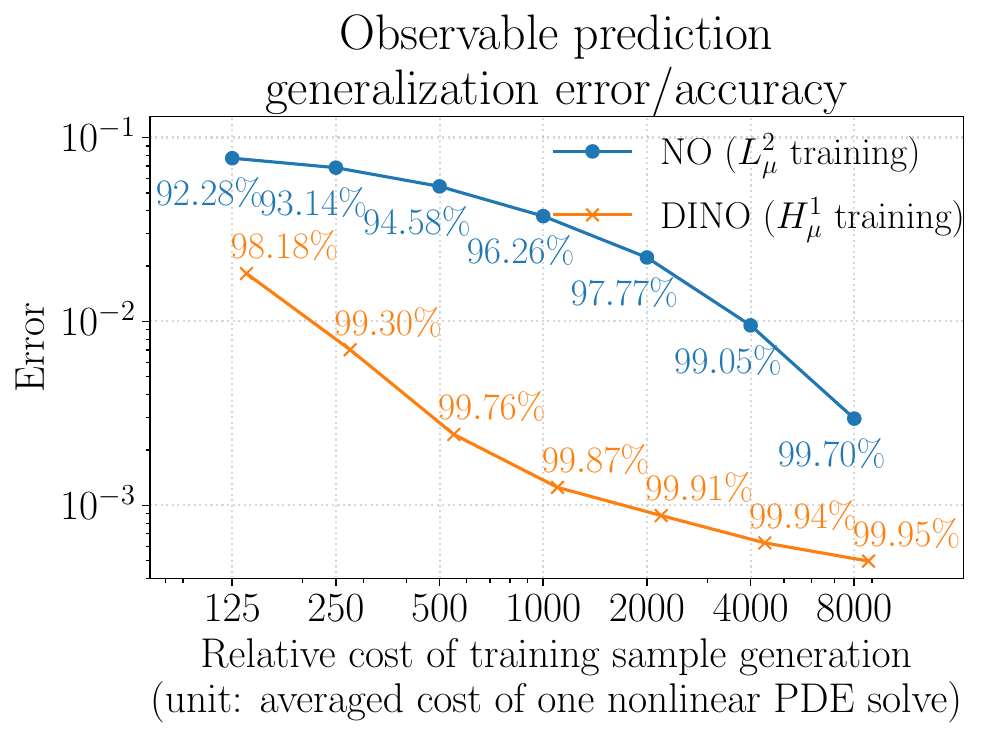}\includegraphics[width=0.49\linewidth]{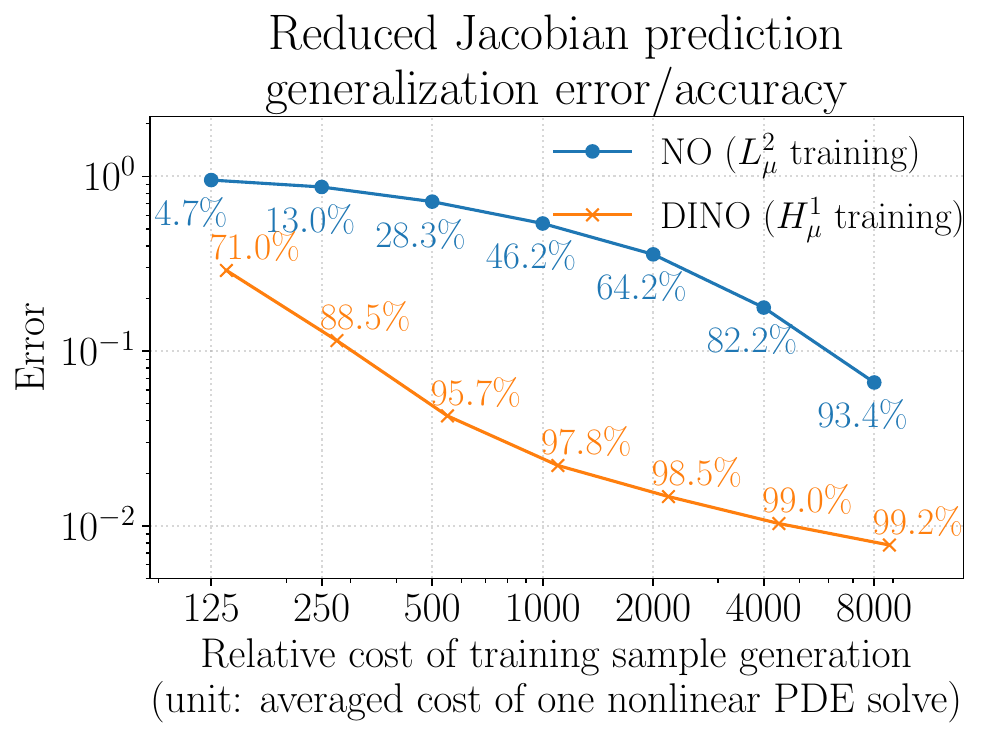}
    \caption{The generalization error and accuracy \cref{eq:generalization_error} of observable prediction and reduced Jacobian prediction made by $L^2_{\mu}$-trained neural operators and $H^1_{\mu}$-trained DINOs for inference of a heterogeneous hyperelastic material property. The error is plotted as a function of training sample generation cost, measured relative to the averaged cost of one nonlinear PDE solve.}
    \label{fig:hyperelasticity_accuracy}
\end{figure}

The plot leads us to similar conclusions outlined in \cref{subsec:ndr_surrogate}. It shows that the derivative-informed $H^1_{\mu}$ operator learning significantly enhances the quality of the neural operator surrogate at the same training sample generation cost compared to the conventional $L^2_{\mu}$ operator learning. Notably, achieving comparable observable and reduced Jacobian prediction generalization accuracy requires approximately 16 times fewer training samples with $H^1_{\mu}$ training.
 
\subsection{MCMC results}\label{subsec:hyperelasticity_results}

We present numerical results on the efficiency of DA-DINO-mMALA compared to the baseline MCMC methods. For each method, we collect $n_c = 10$ Markov chains with different initialization, each with $n_s = 19000$ samples. The step size parameter $\triangle t$ and initialization are chosen carefully according to the procedure detailed in \cref{app:step_size}. The statistics of the MCMC runs and posterior visualization are provided in \cref{sec:supplementary}.

\subsubsection{The baseline MCMC methods}\label{subsubsec:hyperelasticity_baseline}

    \begin{figure}[tpb]
        \centering
        \includegraphics[width=0.4\linewidth]{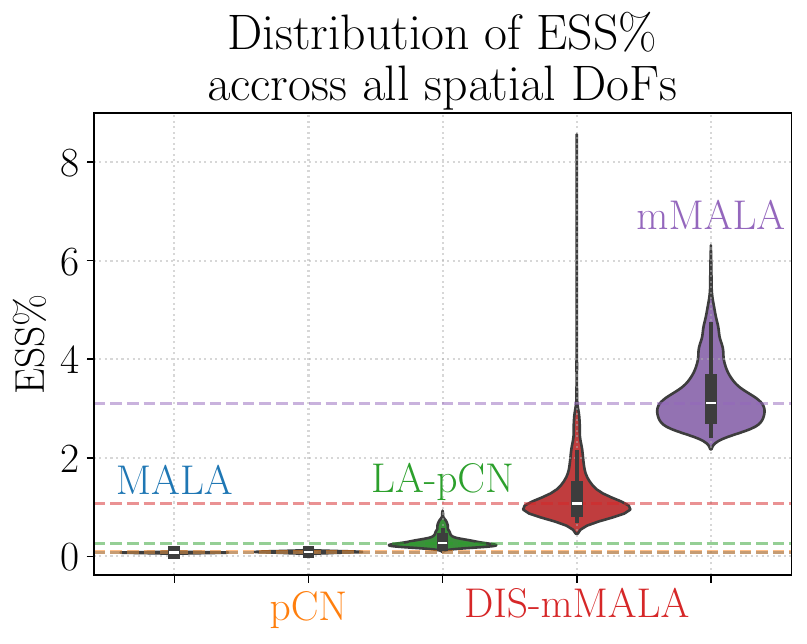}\includegraphics[width=0.43\linewidth]{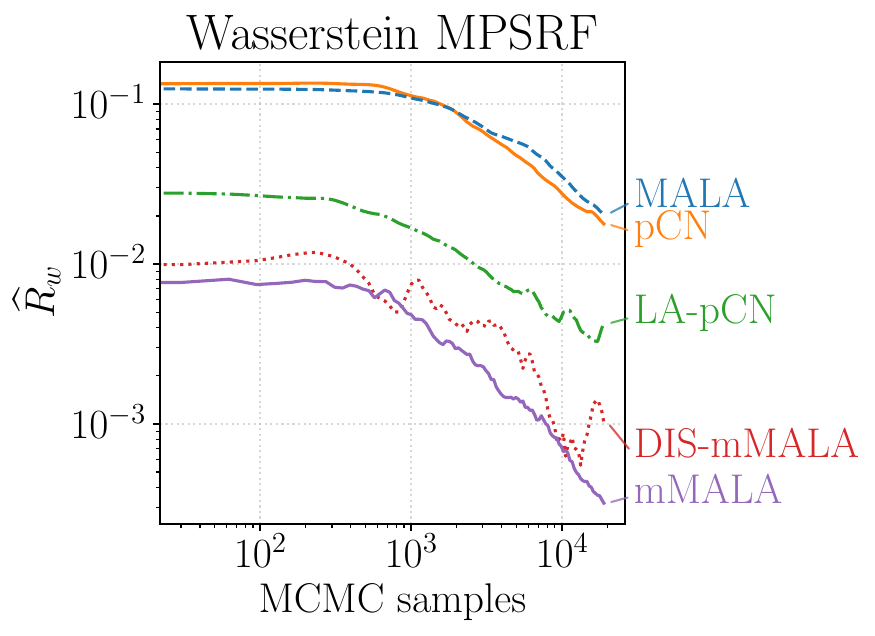}\\
        \caption{Visualization of the diagnostics (see \cref{subsec:diagnostic}) of MCMC chains generated by baseline MCMC methods listed in \cref{tab:mcmc_list} for inference of a heterogeneous hyperelastic material property. (\textit{left}) The violin plot of the ESS\% distributions over $2{,}145$ spatial DoFs for the discretized parameter space. We note that the ESS\% of DIS-mMALA is larger than that of mMALA at only $5$ spatial DoFs. (\textit{left}) The Wasserstein MPSRF as a function of the Markov chain position.}
        \label{fig:hyper_baseline}
    \end{figure}

\begin{table}[tpb]
    \centering
    \begin{tabular}{|c|c|c|c|}\hline
        \diagbox[width=10em]{\bf Baseline}{\bf Speedup}    & mMALA & \makecell{DA-DINO-mMALA\\$n_t=500$} & \makecell{DA-DINO-mMALA\\$n_t=4000$}    \\\hline
        pCN & 29.6 & 72.7  & 96.2 \\\hline
        MALA & 38.8 & 93.2  & 126.4 \\\hline
        LA-pCN & 9.9 & 24.4  & 32.3 \\\hline
        DIS-mMALA & 2.8 & 6.8  & 9  \\\hline
        mMALA & 1 & 2.5  & 3.3 \\\hline
        \makecell{DA-NO-mMALA\\$n_t=4000$} & 5.5  & 13.5 & 17.9\\\hline
    \end{tabular}
    \caption{The effective sampling speedup of mMALA, DINO-mMALA, and DA-DINO-mMALA against other baseline MCMC methods for inference of a heterogeneous hyperelastic material property. The speedup measures the relative speed of generating effective samples for an MCMC method compared against another MCMC method; see \cref{eq:effectiv_sample_speed}.}
    \label{tab:hyperelasticity_cost_per_es}
\end{table}

In \cref{fig:hyper_baseline}, we visualize the diagnostics for the baseline MCMC methods in \cref{tab:mcmc_list}. The diagnostics show that mMALA produces Markov chains with the most effective samples and fastest mixing time among the baseline methods. When comparing methods with the same type of posterior geometry information (see \cref{tab:mcmc_list}), MALA is slightly inferior to pCN, and LA-pCN is much inferior to DIS-mMALA.

Comparing the median of ESS\%, mMALA produces 35 and 3 times more effective samples than pCN and DIS-mMALA. Notably, the ESS\% of DIS-mMALA is larger than that of mMALA for just $5$ DoFs out of $2145$. Moreover, due to the large number of iterative solves required for each PtO map evaluation, each Markov chain sample generated by mMALA is only around 1.18 and 1.05 times more computationally costly than pCN and DIS-mMALA. The effective sampling speedups \cref{eq:effectiv_sample_speed} of mMALA against other baseline MCMC methods are provided in \cref{tab:hyperelasticity_cost_per_es}.

\subsubsection{Delayed acceptance geometric MCMC with surrogate proposals}\label{subsubsec:hyperelasticity_da}

    \begin{figure}[tpb]
        \centering
        \includegraphics[width=0.5\linewidth]{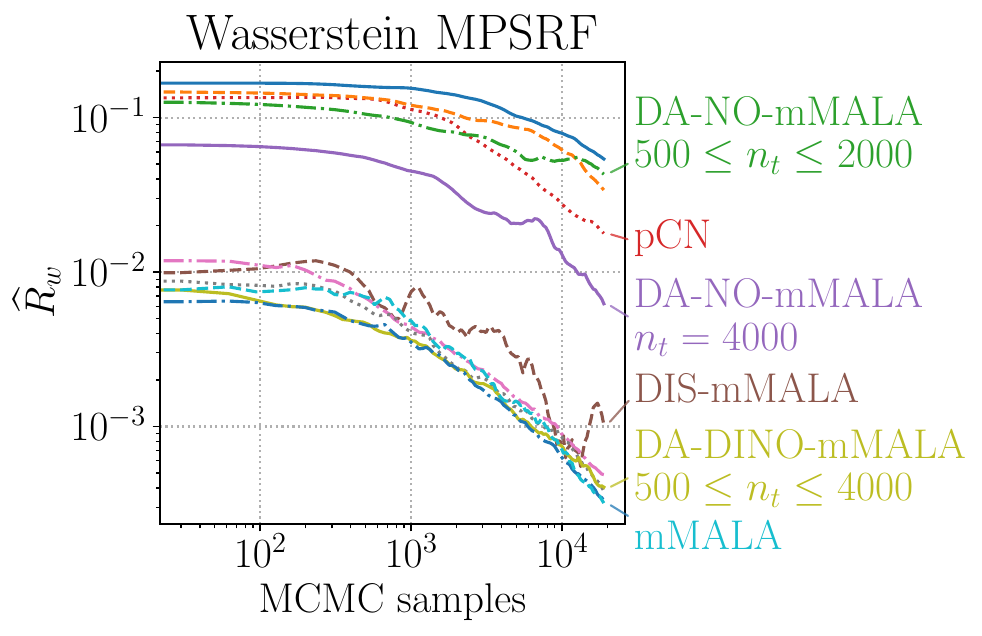}\includegraphics[width = 0.42\linewidth]{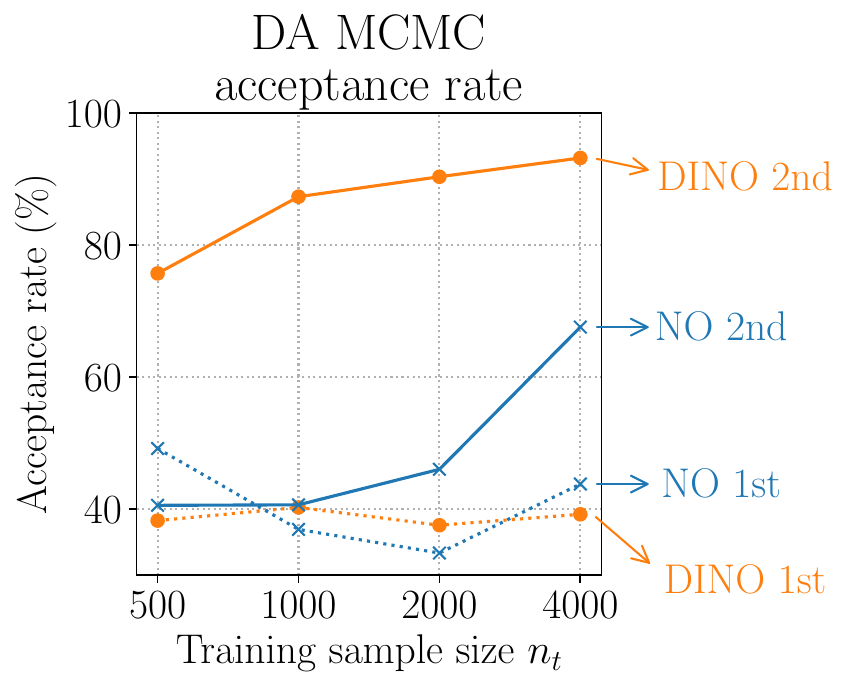}
        \caption{(\textit{left}) The Wasserstein MPSRF diagnostic (see \cref{subsubsec:mpsrf}) of Markov chains generated by DA-DINO-mMALA, DA-NO-mMALA, and other baseline MCMC methods for inference of a heterogeneous hyperelastic material property. (\textit{right}) The proposal acceptance rate in the first and second stages of the DA procedure as a function of training sample size. }
        \label{fig:hyper_da-dino-mpsrf}
    \end{figure}

        \begin{figure}[tpb]
        \centering
        \includegraphics[width=0.8\linewidth]{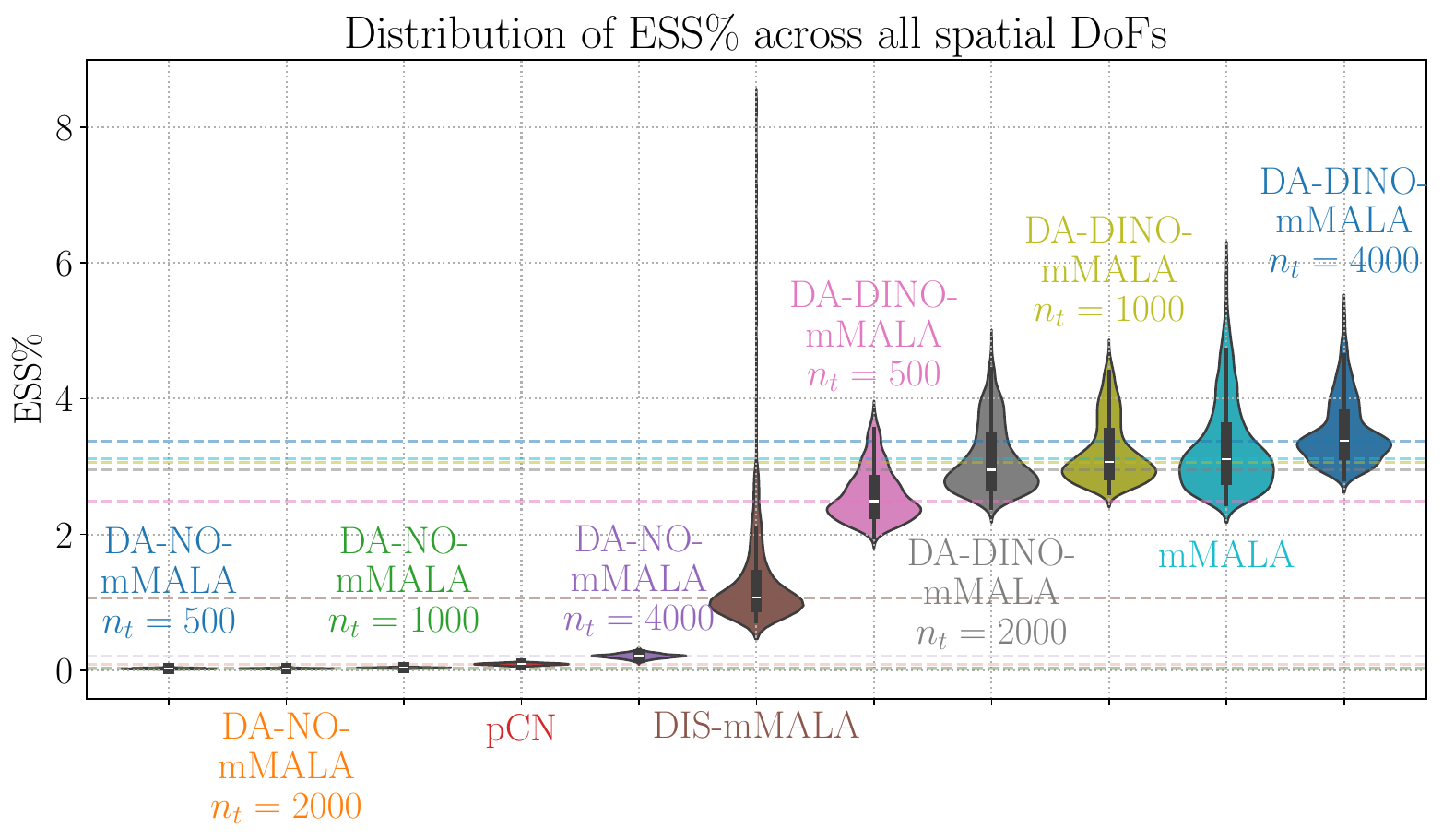}
        \caption{The ESS\% diagnostic (see \cref{subsubsec:ess}) of Markov chains generated by DA-DINO-mMALA, DA-NO-mMALA, and other baseline MCMC methods for inference of a heterogeneous hyperelastic material property. The symbol $n_t$ denotes the training sample size.}
        \label{fig:hyper_da-dino-ess}
    \end{figure}

    \begin{figure}[tpb]
        \centering
        \includegraphics[width=0.49\linewidth]{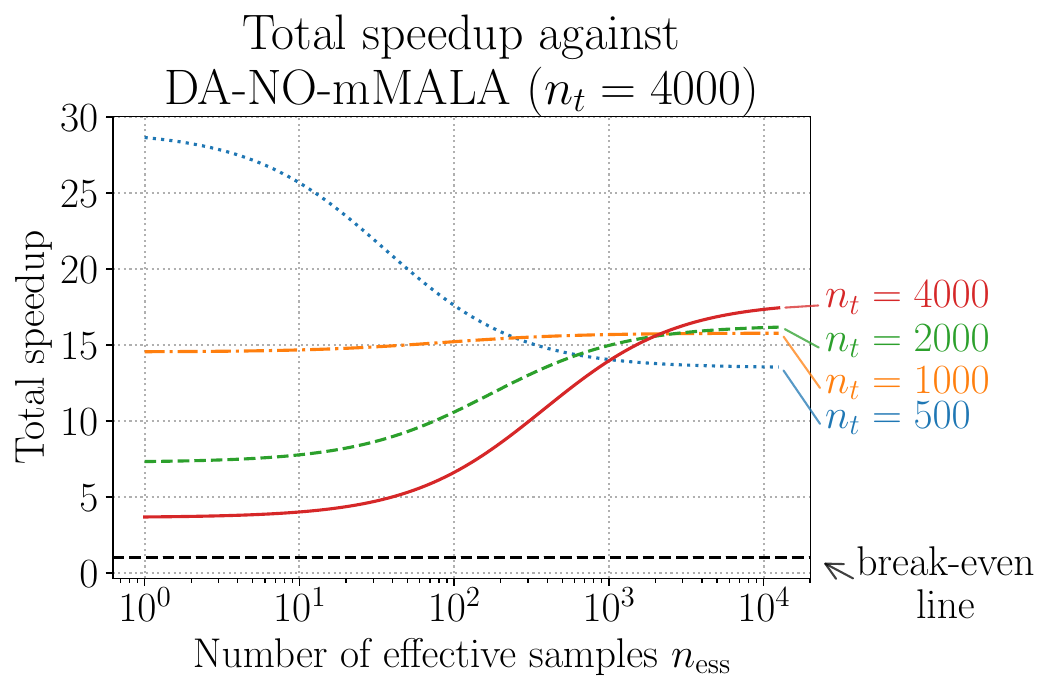}
        \includegraphics[width=0.49\linewidth]{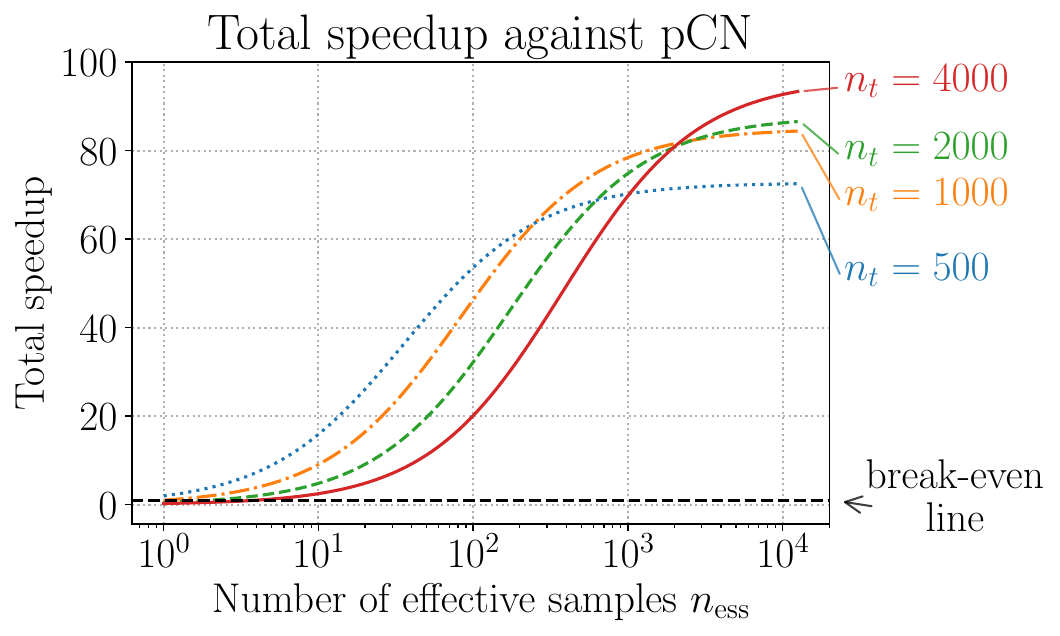}\\ 
        \includegraphics[width=0.49\linewidth]{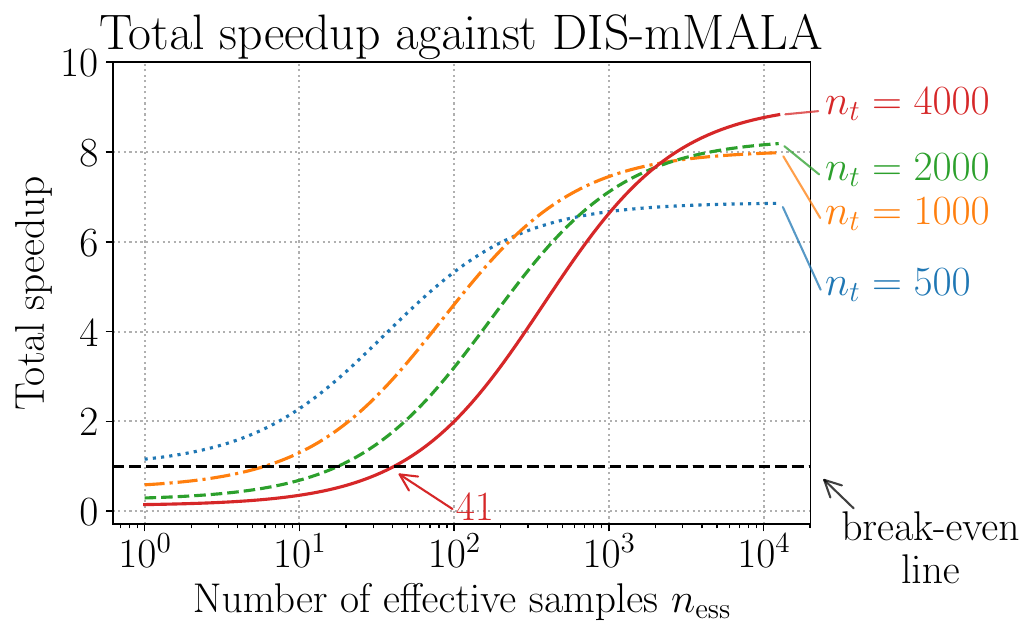}
    \includegraphics[width= 0.49\linewidth]{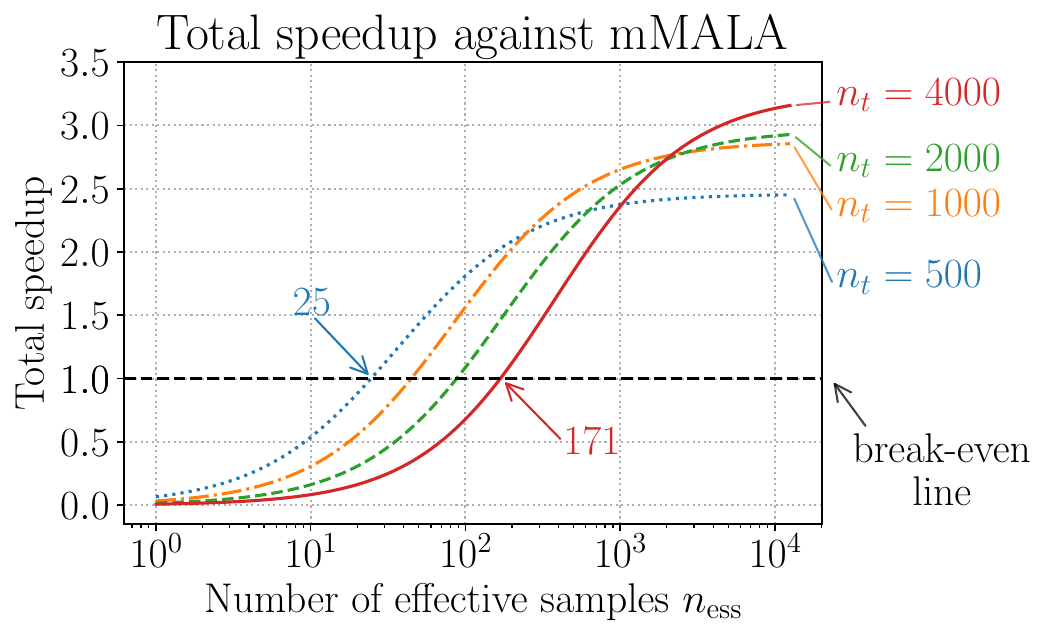}
        \caption{Total effective sampling speedups of DA-DINO-mMALA against DA-NO-mMALA, pCN, DIS-mMALA, mMALA as a function of effective sample size collected in an MCMC run for inference of a heterogeneous hyperelastic material property. The total speedup in \cref{eq:total_speedup} compares the relative speed of an MCMC method for generating effective samples when considering all computational costs, offline (e.g., training and MAP estimate) and online (MCMC). The break-even line indicates an equal total effective sampling speed of the two MCMC methods. The symbol $n_t$ denotes the training sample size.}
        \label{fig:hyper_total_speedup}
    \end{figure}

    In \cref{fig:hyper_da-dino-mpsrf} (\textit{left}) and \cref{fig:hyper_da-dino-ess}, we visualize the diagnostics of the Markov chains generated by the surrogate-driven geometric MCMC with DA. The diagnostics show that DA-DINO-mMALA at $n_t = 500$ and beyond outperforms DIS-mMALA regarding the effective sample size and mixing speed of MCMC chains. Furthermore, the ESS\% of DA-DINO-mMALA plateaued at around $n_t = 1000$, meaning the ESS\% fluctuation is dominated by (i) imperfect step size tuning, (ii) finite chain length, and (iii) finite Markov chain pool sizes. Accounting for the extra computational cost of mMALA at each MCMC sample and the cost reduction of the DA procedure, DA-DINO-mMALA at $n_t = 4000$ generates effective samples around 3.3 times faster than mMALA. 
    
     Driven by $L^2_{\mu}$-trained NOs, DA-NO-mMALA at $n_t = 4000$ surpasses pCN regarding the effective sample size and mixing speed. After including the cost reduction of the DA procedure, DA-NO-mMALA at $n_t = 4000$ generates effective samples 5 and 2 times faster than pCN and LA-pCN. However, it is still 2 and 5.5 times slower than DIS-mMALA and mMALA. Furthermore, it is 13.5 and 17.9 times slower than DA-DINO-mMALA at $n_t = 500$ and $4000$. See the speedups of DA-DINO-mMALA and DA-NO-mMALA against other methods in \cref{tab:ndr_cost_per_es}. 

    In \cref{fig:hyper_da-dino-mpsrf} (\textit{right}), we visualize the proposal acceptance rate in the first and second stages of the DA procedure for both DA-DINO-mMALA and DA-NO-mMALA. The plot shows that DA-DINO-mMALA has a high second-stage acceptance rate, improving consistently as the training sample size grows. The second-stage acceptance rate for DA-NO-mMALA is 1.3--2.3 times the rate for DA-DINO-mMALA. These results affirm that $H^1_{\mu}$-trained DINO leads to higher quality Markov chains for posterior sampling.

    In \cref{fig:hyper_total_speedup}, we plot the total effective sampling speedups of DA-DINO-mMALA against DA-NO-mMALA at $n_t=4000$, pCN, LA-pCN and mMALA. The total speedups against LA-pCN and mMALA show that if one aims to collect more than just 25 effective posterior samples, it is more cost-efficient to switch to DA-DINO-mMALA with $n_t=500$ rather than using mMALA. On the other hand, the asymptotic speedup at $n_t=500$ is relatively small. We achieve an asymptotic speedup of $3.3$ against mMALA, and one only needs to collect 171 effective posterior samples to break even the offline cost of surrogate training.

\section{Conclusion}
In this work, we propose deploying a neural operator surrogate of the PtO map to accelerate geometric MCMC and obtain fast and consistent solutions to infinite-dimensional Bayesian inverse problems. The method represents a synthesis of ideas from reduced basis DINO, DA MCMC, and dimension-independent geometric MCMC with the goal of designing an MCMC proposal that adapts to DINO-predicted posterior local geometry within a delayed acceptance procedure. Compared to conventional geometric MCMC, this surrogate-driven geometric MCMC method leads to significant cost reduction, as it requires no online forward or adjoint sensitivity solves, fewer model evaluations, and fewer instances of prior sampling. Our numerical results show that our proposed method is capable of producing high-quality Markov chains typical of a geometric MCMC method at a much lower cost, leading to substantial speedups in posterior sample generation. In particular, our numerical examples show that DA-DINO-mMALA generates effective posterior samples 60--97 times faster than pCN and 3--9 times faster than mMALA. Moreover, the training cost of DINO surrogates breaks even after collecting just 10--25 effective posterior samples when compared to mMALA.

The key to enabling surrogate acceleration of geometric MCMC is our derivative-informed operator learning formulation. We present an operator learning objective in $H^1_{\mu}$ Sobolev space with Gaussian measure that controls error in approximating the stochastic derivative of the PtO map. This formulation is naturally equipped with the Poincar\'e inequality for nonlinear mappings on function spaces, which is used to derive a $L^2_{\mu}$ approximation error bound for the reduced basis neural operator surrogate consisting of three sources of error: (i) neural network approximation of the optimal reduced mapping, (ii) basis truncation error, and (iii) sampling error when applicable. Our numerical examples show that derivative-informed $H^1_{\mu}$ operator learning achieves similar generalization accuracy in predicting the observable vector and the reduced Jacobian of the PtO map with at least 16--25 times fewer training sample generation cost than conventional $L^2_{\mu}$ operator learning. For coefficient inversion in a nonlinear diffusion--reaction PDE, we observe an estimated 166 times difference in training sample generation cost between derivative-informed $H^1_{\mu}$ and conventional $L^2_{\mu}$ operator learning for achieving an acceleration of mMALA.

\section*{Acknowledgments}

This work was partially supported by the National Science Foundation under awards OAC-2313033 and DMS-234643, and the U.S. Department of Energy, Office of Science, Office of Advanced Scientific Computing Research under awards DE-SC0021239 and DE-SC0023171, and the Air Force Office of Scientific Research under MURI grant FA9550-21-1-0084. The work of Lianghao Cao was partially supported by a Department of Defense Vannevar Bush Faculty Fellowship held by Andrew M. Stuart, and by the SciAI Center, funded by the Office of Naval Research (ONR), under Grant Number N00014-23-1-2729. This work benefited from discussions with Dingcheng Luo and Jakob Zech.
\appendix
\section{Stochastic G\^ateaux differentiability}\label{sec:stochastic_derivative}
From \cref{ass:stochastic_derivative} and \cref{def:gateaux_derivative}, it is clear that $\mu$-a.e.\ G\^ateaux differentiability implies stochastic G\^ateaux differentiability. Here, we provide a case where the reverse cannot be true. The following lemma uses the Cameron--Martin and Feldman--H\'ajek theorem \cite[Theorem 2.51]{sullivan2015intro} to establish that $\mu$-a.e.\ G\^ateaux differentiability requires a more regular forward operator $\bdmc{G}$ than the $\mu$-a.e.\ well-definedness specified in \cref{ass:well_posedness}.
\begin{lemma}\label{lem:sder}
    Assume $\bdmc{G}$ is well-defined $\mu$-a.e.\ and ill-defined on all sets $\scrA\subset \scrM$ with $\mu(\scrA) = 0$. Then, stochastic G\^ateaux differentiability does not imply $\mu$-a.e.\ G\^atueax differentiability. In particular, $\bdmc{G}$ is not G\^ateaux differentiable $\mu$-a.e.\ 
\end{lemma}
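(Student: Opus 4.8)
The plan is to establish the \emph{in particular} assertion---that $\bdmc{G}$ fails to be G\^ateaux differentiable $\mu$-a.e.---since this simultaneously yields the claimed non-implication. Indeed, the hypotheses on $\bdmc{G}$ do not obstruct stochastic differentiability: translations by Cameron--Martin directions $\delta m\in\scrH_{\mu}$ leave $\mu$ quasi-invariant, so $\bdmc{G}(\cdot+t\delta m)$ remains well-defined $\mu$-a.e.\ and the difference quotient in \cref{ass:stochastic_derivative} is $\mu$-a.e.\ meaningful. The argument below shows that differentiability along a single non-Cameron--Martin direction already breaks down on a set of full measure, which is exactly the separation between the two notions.

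First I would fix the domain of definition $D\coloneqq\{m\in\scrM : \bdmc{G}(m)\text{ well-defined}\}$, so that $\mu(D)=1$ by \cref{ass:well_posedness} while $\bdmc{G}$ is ill-defined on $D^c$. Since $\scrM$ is infinite-dimensional, the embedding $\scrH_{\mu}\hookrightarrow\scrM$ is proper with $\mu(\scrH_{\mu})=0$, so $\scrM\setminus\scrH_{\mu}$ carries full measure and I may pick a direction $\delta m\in\scrM\setminus\scrH_{\mu}$ (for instance a $\mu$-typical sample). The crucial structural input is the Feldman--H\'ajek dichotomy \cite[Theorem 2.51]{sullivan2015intro}: for $\delta m\notin\scrH_{\mu}$ and $t\neq 0$, the shifted Gaussian $\mu_{t\delta m}\coloneqq\mathcal{N}(t\delta m,\cpr)$ is mutually singular with $\mu=\mathcal{N}(0,\cpr)$.

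The key step is to convert this singularity into a statement about $\bdmc{G}(\cdot+t\delta m)$. For fixed $t\neq 0$, a change of variables gives $\mu(\{m : m+t\delta m\in D\})=\mu_{t\delta m}(D)$. By singularity $\mu_{t\delta m}$ concentrates on a $\mu$-null set, and the hypothesis that $\bdmc{G}$ is ill-defined on the null sets carrying this singular mass forces $\mu_{t\delta m}(D)=0$; that is, $\bdmc{G}(m+t\delta m)$ is ill-defined for $\mu$-a.e.\ $m$. I would then take $t_n=1/n$, set $B_n\coloneqq\{m : \bdmc{G}(m+t_n\delta m)\text{ well-defined}\}$ (each $\mu$-null), and pass to the full-measure set $\left(\bigcup_n B_n\right)^{c}$. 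For every $m$ in this set the difference quotient $t_n^{-1}(\bdmc{G}(m+t_n\delta m)-\bdmc{G}(m))$ is undefined for all $n$, so the directional limit in \cref{def:gateaux_derivative} along $\delta m$ cannot exist; hence no G\^ateaux derivative exists at such $m$, and $\bdmc{G}$ is not G\^ateaux differentiable $\mu$-a.e.

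I expect the main obstacle to be the middle step, making rigorous the passage from mutual singularity to $\mu_{t\delta m}(D)=0$. This is precisely where the hypothesis on ill-definedness over $\mu$-null sets is indispensable: singularity alone only guarantees that $\mu_{t\delta m}$ lives on \emph{some} $\mu$-null set $S^{c}$, and one must invoke the hypothesis to conclude $S^{c}\subseteq D^{c}$ (equivalently $D\subseteq S$), which then yields $\mu_{t\delta m}(D)\le\mu_{t\delta m}(S)=0$. A secondary, routine point is assembling the single full-measure exceptional set from the countable family indexed by $t_n$ via a countable union of null sets.
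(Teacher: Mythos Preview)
Your proposal is correct and follows essentially the same route as the paper: both arguments hinge on the Feldman--H\'ajek dichotomy to show that for $\delta m\notin\scrH_{\mu}$ the shifted measure $\mu_{t\delta m}$ is singular with respect to $\mu$, then invoke the hypothesis on ill-definedness over $\mu$-null sets to conclude that $\bdmc{G}(m+t\delta m)$ is $\mu$-a.e.\ undefined. Your version is somewhat more explicit---you spell out the countable-sequence step $t_n=1/n$ to assemble a single full-measure exceptional set, and you articulate precisely how the hypothesis converts mutual singularity into $\mu_{t\delta m}(D)=0$---whereas the paper leaves these details implicit, but the substance is the same.
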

\begin{proof}
We focus on the term $\bdmc{G}(M + t\widehat{m})$ for $t>0$ and $M\sim\mu$ in the definition of the G\^ateuax ($\widehat{m}\in\scrM$) and stochastic G\^ateuax ($\widehat{m}\in\scrH_{\mu}$) derivative. Let $\scrN_{\mu}\coloneqq\{\scrA\subset\scrM\,\lvert\,\mu(\scrA) = 0\}$ be the null set of $\mu$ and $M + t\widehat{m}\sim \nu(\cdot;t\widehat{m})$. We have two scenarios listed as follows.
\begin{enumerate}[label = (\roman*)]
    \item $\scrN_{\mu} = \scrN_{\nu(\cdot;t\widehat{m})}$ for all $t>0$ if and only if $\widehat{m}\in\scrH_{\mu}$, i.e., the null sets are shift invariant.
    \item There exists a set $\scrE_t\subset\scrM$ such that $\mu(\scrE_t) = 0$ and $\nu(\scrE_t;t\widehat{m}) = 1$ for all $t>0$ and $\widehat{m}\in\scrM\setminus\scrH_{\mu}$, i.e., the shifted distributions have disjoint probability concentration.
\end{enumerate}
Due to (1), the term $\bdmc{G}(M+t\widehat{m})$ is well-defined a.s.\ for all $t>0$ and $\widehat{m}\in\scrH_{\mu}$, thus the limiting sequence within the stochastic derivative definition is well-defined. Due to (2), $M + t\hat{m}\in \mathscr{E}_t$ a.s.\ for $\widehat{m}\in\scrM\setminus\scrH_{\mu}$, in which case $\bdmc{G}(M+t\hat{m})$ is ill-defined and the limiting sequence within the G\^atueax derivative definition is ill-defined. 
\end{proof}

\section{The gradient and the Gauss--Newton Hessian of the data misfit} \label{appendix:gaussnewtonhessian}
In this section, we show the connection of our definitions of the ppGNH $\calH$ in \cref{eq:ppgh} and the ppg $\sder \Phi^{\by}(m)$ in \cref{eq:ppg} to the conventional definitions. Assume the G\^ateaux derivative $D\bdmc{G}(m)$ exists, then the following relation holds $\mu$-a.e.
\begin{equation}\label{eq:adjoint_relation}
    \sder\bdmc{G}(m) = D\bdmc{G}(m)|_{\scrH_{\mu}}\,,\quad
    \sder \bdmc{G}(m)^* = \cpr D\bdmc{G}(m)^*\bC_n^{-1}\,.
\end{equation}
We have the following $\scrM$-Risez representation of the gradient, i.e., G\^ateaux derivative of the data misfit:
\begin{equation*}
    D\Phi^{\by}(m) \coloneqq D\bdmc{G}(m)^*\bC_n^{-1}(\bdmc{G}(m)-\by)\,.
\end{equation*}
By the definition of the ppg and \cref{eq:adjoint_relation}
\begin{equation*}
    \sder\Phi^{\by}(m) \coloneqq \cpr D\Phi^{\by}(m) \implies \sder\Phi^{\by}(m)=  \sder \bdmc{G}(m)^*(\bdmc{G}(m)-\by)\,.
\end{equation*}
We now show that $\sder\Phi^{\by}(m)$ is indeed the $\scrH_{\mu}$-Riesz representation of the data misfit stochastic derivative. Suppose the stochastic derivative of the data misfit is given by $\calT\in\scrH_{\mu}'$, where $\scrH_{\mu}'$ is the dual space of $\scrH_{\mu}$. By the chain rule, for all $\delta m\in\scrH_{\mu}$ we have
\begin{align*}
    \calT(m)\delta m &= \left(\sder\bdmc{G}(m)\delta m\right)^T\bC_n^{-1}\left(\bdmc{G}(m)-\by\right) && (\text{Chain rule})\\
    & = \left\langle\sder\bdmc{G}(m)^*(\bdmc{G}(m)-\by), \delta m\right\rangle_{\cpr^{-1}} && (\text{Definition of adjoint on } \HS(\scrH_{\mu},\scrY))\\
    & = \left\langle\sder\Phi^{\by}(m), \delta m\right\rangle_{\cpr^{-1}}\,.
\end{align*}
Therefore, our definition of the ppg is identical to the conventional definition. Similarly, we have the following definition of the Gauss--Newton Hessian using the G\^ateaux derivative:
\begin{align*}
    \calH_{\text{GN}}(m) \coloneqq D\bdmc{G}(m)\bC_n^{-1}D\bdmc{G}(m)\,. && (\text{Gauss--Newton Hessian})
\end{align*}
By the definition of the ppGNH and \cref{eq:adjoint_relation}, we have
\begin{equation*}
    \calH(m) \coloneqq \cpr \calH_{\text{GN}}(m) \implies \calH(m)= \sder\bdmc{G}(m)^*\sder\bdmc{G}(m)\,.
\end{equation*}
To see that $\calH(m)$ is the Gauss--Newton Hessian under the stochastic derivative assumption, notice that the stochastic Hessian of the data misfit $\sder^2\Phi^{\by}(m)\in \HS(\scrH_{\mu})$ is given by
\begin{align}\label{eq:stochastic_hessian}
    \sder^2\Phi^{\by}(m)\delta m\coloneqq  \left(\sder^2\bdmc{G}(m)\delta m\right)^*\left(\bdmc{G}(m)-\by\right) + \calH(m)\delta m \quad \forall \delta m\in\scrH_{\mu}\,,
\end{align}
where $\sder^2\bdmc{G}(m)\in \HS(\scrH_{\mu}, \HS(\scrH_{\mu}, \scrY))$ is the stochastic Hessian of the forward operator. Assuming the data misfit term is relatively small in regions with high posterior probability, one may drop the term involving the Hessian of the PtO map and still retain a reasonable approximation to the data misfit Hessian. This makes the ppGNH $\calH$ an approximation to the stochastic Hessian of the data misfit.

\section{Delayed acceptance and neural operator approximation error}\label{app:da_and_error}

Recall from \cref{subsec:da} that the proposal acceptance rate in the second stage of the DA MCMC reflects the quality of the generated Markov chain and the second stage acceptance probability is closely related to the error in surrogate data misfit evaluation, denoted by $\calE_{\text{misfit}}:\scrM\to\R$:
\begin{equation*}
    \ln\rho^{(2)}(m_j, m^{\dagger}) = \calE_{\text{misfit}}(m_j) -\calE_{\text{misfit}}(m^{\dagger})\,,\quad \calE_{\text{misfit}}(m) \coloneqq \Phi^{\by}(m)- \widetilde{\Phi^{\by}}(m)\,\quad \mu\text{-a.e.}.
\end{equation*}
where $\rho^{(2)}$ is the transition rate ratio of DA MCMC defined in \cref{eq:da_second}. The arguments of $\rho^{(2)}$, namely $m_j$ and $m^{\dagger}$, are coupled through the proposal and the first stage of DA MCMC, thus analyzing $\rho^{(2)}$ is not straightforward and is not the focus of this work. However, the error analysis for surrogate data misfit evaluation provides insights into the behavior of $\rho^{(2)}$ and the efficiency of DA MCMC related to the operator surrogate approximation error. In particular, the $L^1_{\mu^{\by}}(\scrM)$ approximation error of the surrogate data misfit (averaged over the true posterior) is controlled by the surrogate approximation error (averaged over the prior):
\begin{align}\label{eq:error_misfit}
    \norm{\calE_{\text{misfit}}}_{L^1_{\mu^{\by}}(\scrM)} &\coloneqq \mathbb{E}_{M\sim\mu^{\by}}\left\lvert\Phi^{\by}(M)-\Phi^{\by}(M)\right\rvert\nonumber\\
    &\leq c_{\text{misfit}}(\bdmc{G}, \widetilde{\bdmc{G}}, \by, \bC_n^{-1} \mu)\norm{\bdmc{G}-\widetilde{\bdmc{G}}}_{L^2_{\mu}(\scrM;\scrY)}\,,
\end{align}
where $c_{\text{misfit}}>0$ is a constant given by
\begin{equation*}
    c_{\text{misfit}}(\bdmc{G},\widetilde{\bdmc{G}}, \by, \bC_n^{-1}, \mu) = \norm{\exp(-\widetilde{\Phi^{\by}}(\cdot))}_{L^{\infty}_{\mu}(\scrM)}\norm{\frac{1}{2}\left(\bdmc{G}(\cdot)+\widetilde{\bdmc{G}}(\cdot)\right)-\by}_{L^2_{\mu}(\scrM;\scrY)}\,.
\end{equation*}
See proof by \citet[Theorem 1]{cao2023residual}.

\section{Proofs of \cref{prop:dis_error,prop:kle_error}}\label{app:proof_error}
\begin{proof}[\cref{prop:dis_error}]
    We decompose the operator surrogate approximation error into two parts using a triangle inequality:
    \begin{equation*}
        \norm{\bdmc{G}-\widetilde{\bdmc{G}}}_{L^2_{\mu}(\scrM;\scrY)}\leq \norm{\bdmc{G}-\bdmc{G}_r}_{L^2_{\mu}(\scrM;\scrY)} + \norm{\bdmc{G}_r-\widetilde{\bdmc{G}}}_{L^2_{\mu}(\scrM;\scrY)}\,.
    \end{equation*}
    We examine the second term on the right-hand side of the inequality. First, we have $\Psi_r^*M\sim\mathcal{N}(\bzero, \bI_r)$ for any linear encoder $\Psi_r^*$ defined as in \cref{eq:projector}; see \cref{eq:encoded_white_noise}. Second, notice that $\bdmc{G}_r(m) \equiv \bdmc{G}_r(\widehat{\Psi_r^{\text{DIS}}}\widehat{{\Psi_r^{\text{DIS}}}}^*m)$. Consequently, the neural network error is given by
    \begin{align*}
        \norm{\widetilde{\bdmc{G}}-\bdmc{G}_r}_{L^2_{\mu}(\scrM;\scrY)}^2 &= \mathbb{E}_{\bM_r\sim \mathcal{N}(\bzero,\bI_r)}\left[\norm{\bV\nn(\bM_r) - \bdmc{G}_r\left(\widehat{\Psi_r^{\text{DIS}}}\bM_r\right)}_{\bC_n^{-1}}^2\right]\\
        & = \mathbb{E}_{\bM_r\sim \mathcal{N}(\bzero,\bI_r)}\left[\norm{\nn(\bM_r) - \bV^*\bdmc{G}_r\left(\widehat{\Psi_r^{\text{DIS}}}\bM_r\right)}^2\right]\,.
    \end{align*}
    We further decompose the first term on the right-hand side of the triangle inequality. 
    \begin{itemize}[leftmargin=0pt, label={}]
    \item \textit{Part I: Subspace Poincar\'e inequality.} We follow \citealt[Propositions 2.4]{zahm2020gradient}. Due to the Poincar\'e inequality for $H^1_{\mu}$ (\cref{thm:poincare} and \citealt[5.5.6]{bogachev1998gaussian}), for any $\calS\in H^1_{\mu}(\scrM)\coloneqq H^1_{\mu}(\scrM;\R)$ and any pair of encoder $\Psi_r^*$ and decoder $\Psi_r$ as defined in \cref{eq:projector}, we have:
    \begin{equation*}
        \norm{\calS-\calS_r}_{L^2_{\mu}(\scrM)}^2\leq \norm{(\calI_{\scrH_{\mu}}-\Psi_r\Psi_r^*)D_{\scrH}\calS}_{L^2_{\mu}(\scrM;\scrH_{\mu})}^2\,,
        \end{equation*}
    where $\calS_r$ is the $L^2_{\mu}(\scrM)$-optimal reduced mapping of $\calS$ for the given encoder and decoder, $\sder\calS\in L^2_{\mu}(\scrM;\scrH_{\mu})$ is the $\scrH_{\mu}$-representation of the stochastic derivative of $\calS$. The key to extend the results by \citealt[Propositions 2.4]{zahm2020gradient} is to show that the mapping for any $m'\in\scrM$
    \begin{equation*}
        f:m\mapsto \calS\left(\Psi_r\Psi_r^*m' + (\calI_{\scrM}-\Psi_r\Psi_r^*)m\right)
    \end{equation*}
     has a stochastic derivative of the following form via the chain rule:
    \begin{equation*}
        \sder f(m) = (\calI_{\scrH_{\mu}}-\Psi_r\Psi_r^*)\sder\calS(\Psi_r\Psi_r^*m' + (\calI_{\scrM}-\Psi_r\Psi_r^*)m)\,.
    \end{equation*}
    The $H^1_{\mu}(\scrM)$-Poincar\'e inequality is applied to $f$, which leads to the subspace Poincar\'e inequality
    \item \textit{Part II: Error upper bound.} We follow \citealt[Proposition 2.5]{zahm2020gradient} to our setting. Due to the subspace Poincar\'e inequality, for any pair of encoder $\Psi_r^*$ and decoder $\Psi_r$ as defined by \cref{eq:projector}, we have
    \begin{equation*}
        \norm{\bdmc{G}-\bdmc{G}_r}^2_{L^2_{\mu}(\scrM;\scrY)}\leq \text{Tr}_{\scrH_{\mu}}(\calH_A) - \text{Tr}\left(\Psi_r^*\calH_A\Psi_r\right)\,.
    \end{equation*}
    where $\bdmc{G}_r$ is the $L^2_{\mu}(\scrM;\scrY)$-optimal reduced mapping for the given $\Psi_r^*$ and $\Psi_r$.
    The key to extend the results by \citealt[Proposition 2.5]{zahm2020gradient} is to define $\calS^{(j)}\coloneqq \bv_j^T\bC_n^{-1}\bdmc{G}$ where $\{\bv_j\}_{j=1}^{d_y}$ is a $\scrY$-ONB.
    \begin{equation*}
        \norm{\bdmc{G}-\bdmc{G}_r}^2_{L^2_{\mu}(\scrM;\scrY)} = \sum_{j=1}^{d_y} \norm{\calS^{(j)}-\calS^{(j)}_r}^2_{L^2_{\mu}(\scrM)}\,.
    \end{equation*}
    Applying the subspace inequality to $\calS^{{j}}$ and the transformation between trace and HS norm \cite[Theorem 7.3]{gohberg2012traces}, we have
    \begin{align}
        \norm{\bdmc{G}-\bdmc{G}_r}^2_{L^2_{\mu}(\scrM;\scrY)}
        & \leq \sum_{j=1}^{d_y} \text{Tr}_{\scrH_{\mu}}\bigg((\calI_{\scrH_{\mu}}-\Psi_r\Psi_r^*)\\
        &\quad\mathbb{E}_{M\sim\mu}\left[\sder\bdmc{G}^*\bv_j\bv_j^T\bC_n^{-1}\sder\bdmc{G}\right](\calI_{\scrH_{\mu}}-\Psi_r\Psi_r^*)\bigg)\nonumber\\
        & = \text{Tr}_{\scrH_{\mu}}\left((\calI_{\scrH_{\mu}}-\Psi_r\Psi_r^*)\calH_A(\calI_{\scrH_{\mu}}-\Psi_r\Psi_r^*)\right)\,.\label{eq:orm_upper_bound}
    \end{align}
    \item \textit{Part III: Sampling error.} We follow a line of arguments presented in \cite{lam2020multifideility}. Let $\{\disev{j}\in\R_+, \disbasis{j}\in\scrH_{\mu}\}_{j=1}^{\infty}$ and $\{\widehat{\disev{j}}\in\R_+, \widehat{\disbasis{j}}\in\scrH_{\mu}\}_{j=1}^{\infty}$ denote the eigendeompositions of $\calH_A \coloneqq \mathbb{E}_{M\sim\mu}[\calH(m)]$ and $\widehat{\calH}$ with decreasing eigenvalues and $\scrH_{\mu}$-orthonormal basis. Let $\Psi_r^{\text{DIS}},\widehat{\Psi_r^{\text{DIS}}}\in \HS(\R^r, \scrH_{\mu})$ be the linear decoder defined using the first $r$ eigenbases. Then, we can deduce the optimal low-rank approximation of $\calH_A$ and $\widehat{\calH}$ using the Courant min--max principle:
        \begin{equation*} 
            \Psi_r^{\text{DIS}} \in \argmax_{\stackrel{\small\calU_r\in\HS(\R^r, \scrH_{\mu})}{\small\calU_r^*\calU_r = \bI_r}}\text{Tr}\left(\calU_r^*\calH_A\calU_r\right)\,,\quad\widehat{\Psi_r^{\text{DIS}}} \in \argmax_{\stackrel{\small\calU_r\in\HS(\R^r, \scrH_{\mu})}{\small\calU_r^*\calU_r = \bI_r}}\text{Tr}\left(\calU_r^*\widehat{\calH}\calU_r\right)\,.
        \end{equation*}
        Assume $\widehat{\calH}-\calH$ can be decomposed to $\calV\calD\calV^*$, where $\calV\in\HS(l^2,\scrH_{\mu})$ has columns consisting of $\scrH_{\mu}$-orthonormal eigenbases and $\calD \in l^2$ consists of eigenvalues, the cyclic property of trace leads to
        \begin{align*}
            \text{Tr}(\calU_r^*(\widehat{\calH}-\calH_A)\calU_r) &=  \text{Tr}(\calU_r^*\calV\calD\calV^*\calU_r) \\
            &\leq \norm{\calH-\widehat{\calH}}_{B(\scrH_{\mu})}\text{Tr}_{\scrH_{\mu}}(\calV^*\calU_r\calU_r^*\calV) = r\norm{\calH_A-\widehat{\calH}}_{B(\scrH_{\mu})}\,,
        \end{align*}
        where $\calU_r\in\HS(\R^r,\scrH_{\mu})$ is arbitrary and has columns consisting of $\scrH_{\mu}$-orthonormal reduced bases. Applying the inequality above twice, we have the following upper bound of the approximation error to the optimal reduced mapping given the pair of decoder and encoder $\widehat{\Psi_r^{\text{DIS}}}$ and $\widehat{\Psi_r^{\text{DIS}}}^*$:
        \begin{align*}
             \norm{\bdmc{G}-\bdmc{G}_r}^2_{L^2_{\mu}(\scrM;\scrY)}&\leq \text{Tr}_{\scrH_{\mu}}(\calH_A) - \text{Tr}\left(\widehat{\Psi_r^{\text{DIS}}}^*\calH_A\widehat{\Psi_r^{\text{DIS}}}^*\right)\\
             & \leq \text{Tr}_{\scrH_{\mu}}(\calH_A) - \text{Tr}\left(\Psi_r^{\text{DIS}}\calH_A(\Psi_r^{\text{DIS}})^*\right)  + 2r\norm{\calH_A-\widehat{\calH}}_{B(\scrH_{\mu})}\\
             &=\sum_{j=r+1}^{\infty} \disev{j} + 2r\norm{\calH_A-\widehat{\calH}}_{B(\scrH_{\mu})}\,.
        \end{align*}
    \end{itemize}
\end{proof}
\begin{proof}[\cref{prop:kle_error}]
    We follow the same arguments by \citealt[Porposition 3.1]{zahm2020gradient}. From \cref{eq:orm_upper_bound} and the definition of KLE, the approximation error for the optimal reduced mapping defined by the KLE reduced bases $\Psi_r^{\text{KLE}}\in \HS(\R^r,\scrH_{\mu})$ is given by
    \begin{align*}
        \norm{\bdmc{G}-\bdmc{G}_r}^2_{L^2_{\mu}(\scrM;\scrY)} &\leq \text{Tr}_{\scrH_{\mu}}\left((\calI_{\scrH_{\mu}}-\Psi_r^{\text{KLE}}{\Psi_r^{\text{KLE}}}^*)\calH_A(\calI_{\scrH_{\mu}}-\Psi_r^{\text{KLE}}{\Psi_r^{\text{KLE}}}^*)\right)\\
        &\leq \norm{\calH_A}_{B(\scrH_{\mu})}\mathbb{E}_{M\sim\mu}\left[\norm{\left(\calI_{\scrM}-\Psi_r^{\text{KLE}}{\Psi_r^{\text{KLE}}}^*\right)M}_{\scrM}^2\right]\\
        & =  \norm{\calH_A}_{B(\scrH_{\mu})}\sum_{j=r+1}^{\infty} \left(\kleev{j}\right)^2\,.
    \end{align*}
    By \cref{ass:stochastic_derivative}, we have the following bound $\mu$-a.e.
    \begin{align*}
        \norm{\sder\bdmc{G}(m)}_{B(\scrH,\scrY)}\leq\sup_{\stackrel{\delta m\in\scrH_{\mu}}{\norm{\delta m}_{\scrH_{\mu}} = 1}}\lim_{t\to 0} \norm{t^{-1}\left(\bdmc{G}(m+t\delta m)-\bdmc{G}(m)\right)}_{\bC_n^{-1}}\leq c_{\bdmc{G}}\,.
    \end{align*}
    Thus we have
    \begin{equation*}
        \norm{\calH_A}_{B(\scrH_{\mu})} = \sup_{\stackrel{\delta m\in\scrH_{\mu}}{\norm{\delta m}_{\scrH_{\mu}}=1}} \mathbb{E}_{M\sim\mu}\left[\norm{\sder\bdmc{G}(M)\delta m}^2_{\bC_n^{-1}}\right]\leq c_{\bdmc{G}}^2\,.
    \end{equation*}
    Due to \cref{eq:orm_upper_bound}, the minimized upper bound is achieved when $\Psi_r=\Psi_r^{\text{DIS}}$. Therefore, we have 
    \begin{equation*}
        \sum_{j=r+1}^{\infty}\disev{j}\leq \norm{\calH_A}_{B(\scrH_{\mu})}\sum_{j=r+1}^{\infty} \left(\kleev{j}\right)^2\leq c_{\bdmc{G}}^2 \sum_{j=r+1}^{\infty} \left(\kleev{j}\right)^2\,.
    \end{equation*}
\end{proof}

\section{Proofs of \cref{lemm:gauss_rv,prop:splitting}}\label{app:proof_splitting}
\begin{proof}[\cref{lemm:gauss_rv}]
    The statement on transforming Gaussian random elements is standard; see, e.g., \citealt[Proposition 1.2.3]{dapratto2002second}. Since the $\scrM$-adjoint of $\Psi_r^*$ is $\cpr^{-1}\Psi_r$, we have
    \begin{equation}\label{eq:encoded_white_noise}
        \Psi^*M\sim \mathcal{N}(0, \Psi^*\cpr\cpr^{-1}\Psi_r) = \mathcal{N}(0, \bI_r)\,.
    \end{equation}
    We focus on the statement on the independence of two random elements $M_r\perp M_{\perp}$ given by:
    \begin{equation*}
    \begin{cases}
        M_r = \Psi_r\Psi_r^*M\\
        M_{\perp}\coloneqq(\calI_{\scrM}-\Psi_r\Psi_r^*)M
    \end{cases}\,,\quad M\sim \mu\,.
    \end{equation*}
    We examine the characteristic function for the joint random element $X = (M_{r}, M_{\perp})$ and equip the product space $\scrM\times\scrM$ with an extended inner product:
    \begin{equation*}
        \left\langle(t_r, t_{\perp}), (s_r, s_{\perp})\right\rangle_{\scrM\times\scrM} = \left\langle t_r, s_r\right\rangle_{\scrM} + \left\langle t_{\perp}, s_{\perp}\right\rangle_{\scrM}\,.
    \end{equation*}
    We have the following form for the characteristic function of $X$:
    \begin{align*}
        &\quad\;\mathbb{E}_{M\sim\mu}\left[\exp\left(i\left\langle\Psi_r\Psi_r^*M, t_r\right\rangle_{\scrM}\right)\exp\left(i\left\langle\left(\mathcal{I}_{\scrM}-\Psi_r\Psi_r^*\right)M, t_{\perp}\right\rangle_{\scrM}\right)\right]\\
        &= \mathbb{E}_{M\sim\mu}\left[\exp\left(i\left\langle M, (\Psi_r\Psi_r^*)^*(t_r-t_{\perp}) + t_{\perp}\right\rangle_{\scrM}\right)\right] && (\text{Def. of $\scrM$-adjoint})\\
        & = \exp\left(\left\langle\cpr\left((\Psi_r\Psi_r^*)^*(t_r-t_{\perp}) + t_{\perp}\right), (\Psi_r\Psi_r^*)^*(t_r-t_{\perp}) + t_{\perp}\right\rangle_{\scrM}\right) && (\text{Def. of charac. func.} )\\
        & = \exp\left(\left\langle\Psi_r\Psi_r^*\cpr(t_r-t_{\perp}) + \cpr t_{\perp}, (\Psi_r\Psi_r^*)^*(t_r-t_{\perp}) + t_{\perp}\right\rangle_{\scrM}\right) && (\text{Explicit }\scrM\text{-adjoint})\\
        & = \exp\left(\left\langle\Psi_r\Psi_r^*\cpr t_r, t_r\right\rangle_{\scrM}\right)\exp\left(\left\langle\left(\calI_{\scrM}-\Psi_r\Psi_r^*\right)\cpr t_{\perp}, t_{\perp}\right\rangle_{\scrM}\right) && (\text{Cancel cross terms})
    \end{align*}
    Therefore, by the definition of the characteristic function for Gaussian measures, we have
    \begin{equation*}
        X\sim\mathcal{N}\left(\mathbf{0}, \begin{bmatrix}
            \Psi_r\Psi_r^*\cpr & 0\\
            0 & \left(\calI_{\scrM}-\Psi_r\Psi_r^*\right)\cpr
        \end{bmatrix}\right)\,,
    \end{equation*}
    and thus $M_r$ and $M_{\perp}$ are independently distributed.
\end{proof}

\begin{proof}[\cref{prop:splitting}]
    We show that the surrogate mMALA proposal $\widetilde{\mathcal{Q}_{\mmala}}(m,\cdot)$ at a given $m$ can be defined through a deterministic transformation of the prior following \cref{lemm:gauss_rv}. Consider $\widetilde{\calT}(m)\in B(\scrM)$ given by
    \begin{equation*}
        \widetilde{\calT}(m) = \calI_{\scrM} + \widetilde{\Psi_r}\left(\left((\widetilde{d}_j+1)^{1/2}-1\right)\delta_{jk}\right)\widetilde{\Psi_r}^*\,,
    \end{equation*}
    where $\widetilde{\Psi_r}$, $\widetilde{\Psi_r}^*$, and $\widetilde{d}_j$ are defined in \cref{subsec:surrogate_geometry}. The covariance of the local Gaussian approximation of the posterior in \cref{eq:surrogate_cpost} can be expressed as
    \begin{equation}\label{eq:posterior_trasnform}
        \widetilde{\cpo}(m) = \widetilde{\calT}(m)\cpr\widetilde{\calT}(m)^*.
    \end{equation}
   The key to validating \cref{eq:posterior_trasnform} is to take the adjoint of $\widetilde{\Psi_r}$ and $\widetilde{\Psi_r}^*$ in $\scrM$ when taking the adjoint of $\calT(m)$. In particular, the $\scrM$-adjoint of $\widetilde{\calT}(m)$ is given by
    \begin{equation*}
        \widetilde{\calT}(m)^* = \calI_{\scrM} + \cpr^{-1}\widetilde{\Psi_r}\left(\left((\widetilde{d}_j+1)^{1/2}-1\right)\delta_{jk}\right)\widetilde{\Psi_r}^*\cpr\,.
    \end{equation*}
    The covariance transformation given by $\widetilde{\calT}(m)$ leads to
    \begin{equation*}
        M^{\dagger}\sim \widetilde{\calQ_{\mmala}}(m,\cdot)\quad\text{and}\quad M^{\dagger} = sm + (1-s)\widetilde{\calA}(m) + \sqrt{1-s^2}\widetilde{\calT}(m)M\,,\quad M\sim\mu\,.
    \end{equation*}
    Since the $M$ can be independently decomposed into two parts using the encoder $\Psi_r^*$ and decoder $\Psi_r$ due to \cref{lemm:gauss_rv}, the proposal distribution can also be decomposed into two parts independently.
\end{proof}

\section{Diagnostics, tuning and initialization}\label{app:step_size}

\subsection{On the MPSRF diagnostic}

The MPSRF is typically defined as follows:
\begin{align*}
        \widehat{R} = \sqrt{\max_{\stackrel{m\in\scrM \text{ s.t.}}{\norm{m}_{\scrM}=1}}\frac{\left\langle m, \widehat{\calW}_sm\right\rangle_{\scrM}}{\left\langle m, \widehat{\calV}_sm\right\rangle_{\scrM}}}\,. && (\text{Conventional MPSRF})
\end{align*}
where $\widehat{\calW}_s$ and $\widehat{\calV}_s$ are defined in \cref{eq:covariance_estimator}. However, this quantity is not well-defined since the two empirical covariance operators are singular for a finite sample size $n_s$. During computation on a discretized parameter space, a pool of long MCMC chains is often needed to estimate this quantity. Moreover, the MPSRF characterizes Markov chains along a single slice of the parameter space, which is sufficient for monitoring convergence but insufficient for comparing the quality of Markov chains generated by different MCMC methods. 

\subsection{On tuning and initialization}

We provide the procedure for determining the step size parameter $\triangle t$ for a given Bayesian inverse problem. The procedure is consistent across all MCMC algorithms in \cref{tab:mcmc_list,tab:dino_mcmc_list}. It is designed to maximize the sampling performance of MCMC methods while maintaining uniform behaviors of the MCMC chain across different regions of the parameter space. 

First, we choose a list of candidate values for $\triangle t$. Then, we generate an MCMC chain $\{m_j\}_{j=1}^{n_s}$ with $n_s=5000$ samples (after burn-in) for each candidate value. Using these samples, we compute the acceptance rate (AR) and the mean square jump (MSJ) given by
\begin{equation*}
    \text{AR}\coloneqq \frac{n_{\text{accept}}}{n_{s}}\times 100\%\,,\quad\text{MSJ} \coloneqq \frac{1}{n_s-1}\sum_{j=1}^{n_s-1}\norm{m_{j+1}-m_j}^2_{\scrM}\,,
\end{equation*}
where $n_{\text{accept}}$ is the number of accepted proposal samples in the MCMC chain. We down-select the candidate values by choosing a maximum step size $\triangle t_{\text{max}}>0$ such that AR monotonically decreases and MSJ monotonically increases as a function of $\triangle t\in[0, \triangle t_{\text{max}}]$. Finally, we choose a tuned step size value from the remaining candidates that maximizes the median of the single chain version of the ESS\% in \cref{eq:ess}.

To make the step size tuning more efficient, we initialize the chains with samples from the Laplace approximation to reduce the number of burn-in samples. Once the step size tuning is complete, we initialize the subsequent MCMC runs using samples obtained from step size tuning.
\begin{remark}
    Due to the high non-linearity of Bayesian inversion in our numerical examples, the step size tuning procedure introduced above is often ineffective for MALA and LA-pCN. In particular, the ARs for MCMC chains initialized at different positions have large variations when the tuned step size is employed. In such cases, the MCMC chains are often stuck in local regions with low ARs; see \cref{fig:la-pcn_large_step}. We reduce the step size to eliminate this behavior until the AR variations are within $\pm 5\%$ of the averaged value.
\end{remark}
\begin{figure}[tbp]
    \centering
    \includegraphics[width = 0.4\linewidth]{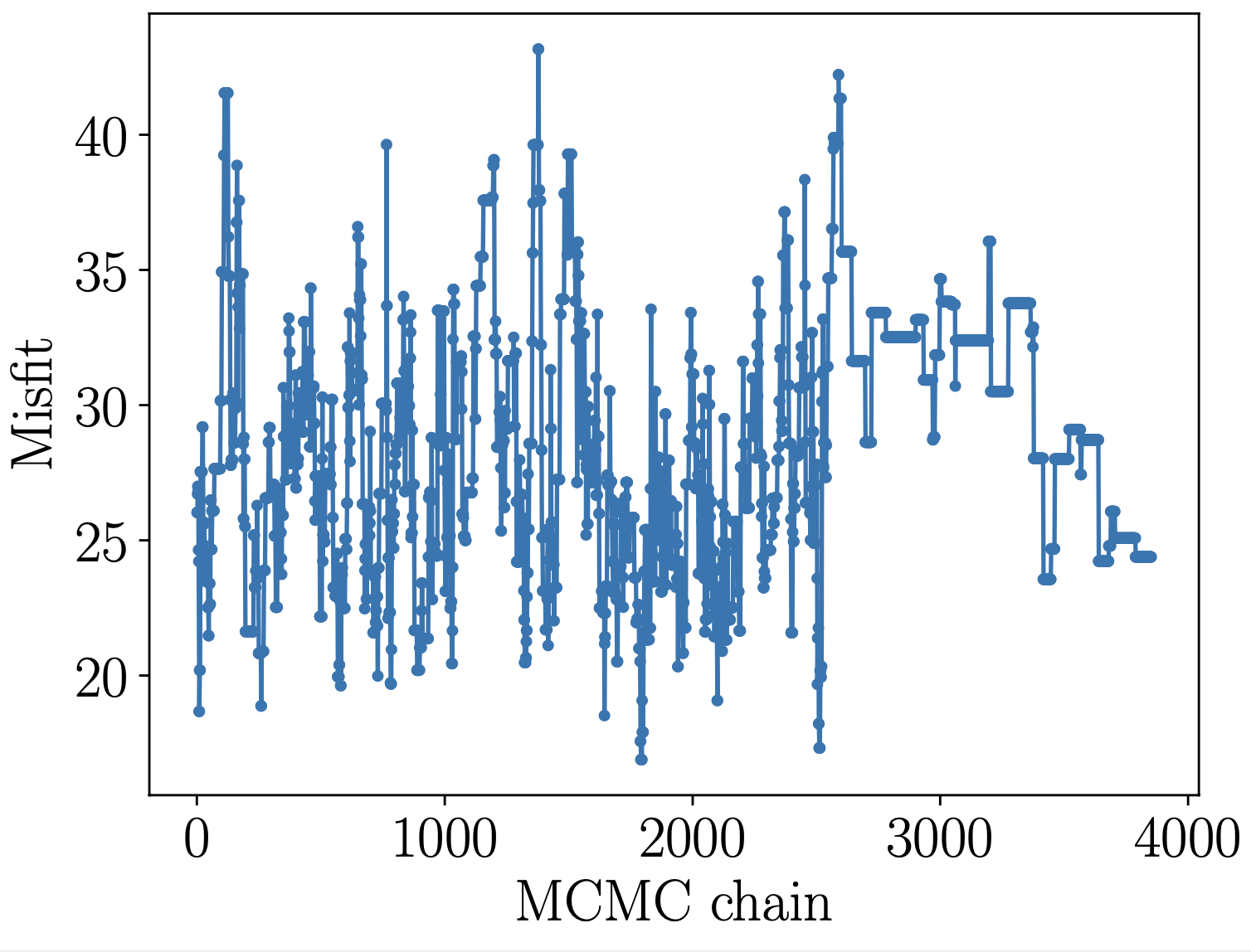}
    \caption{The trace plot of data misfit values along a Markov chain generated by LA-pCN at large step size for coefficient inversion in a nonlinear diffusion--reaction PDE. The Markov chain behaves drastically differently in different parts of the chain, leading to a large bias in posterior estimation.}
    \label{fig:la-pcn_large_step}
\end{figure}

\section{Supplementary materials for the numerical examples}\label{sec:supplementary}

In \cref{fig:ndr_basis}, we visualize selected DIS and KLE basis functions for coefficient inversion in a nonlinear diffusion--reaction PDE. In \cref{tab:ndr_mcmc_stats}, we list the statistics for the MCMC runs. In \cref{fig:ndr_posterior}, we visualize the posterior samples, mean estimate via MCMC using mMALA, the absolute error between MAP estimate and mean estimate, pointwise variance estimate via MCMC using mMALA, and the absolute error of pointwise variance estimate from LA and MCMC. The same visualization and statistics for hyperelastic material deformation is provided in \cref{fig:hyper_basis,fig:hyper_posterior,tab:hyper_mcmc_stats}. We emphasize that the large error between the MCMC mean estimate and the MAP estimate indicates a non-Gaussian posterior distribution.

\begin{figure}[tbp]
    \centering
    \begin{tabular}{|c| c | c | c | c|} \hline
       &\bf \#1  &\bf \#2 &\bf \#4 &\bf \#8 \\\hline
       \bf DIS & \raisebox{-.5\height}{\includegraphics[width = 0.15\linewidth]{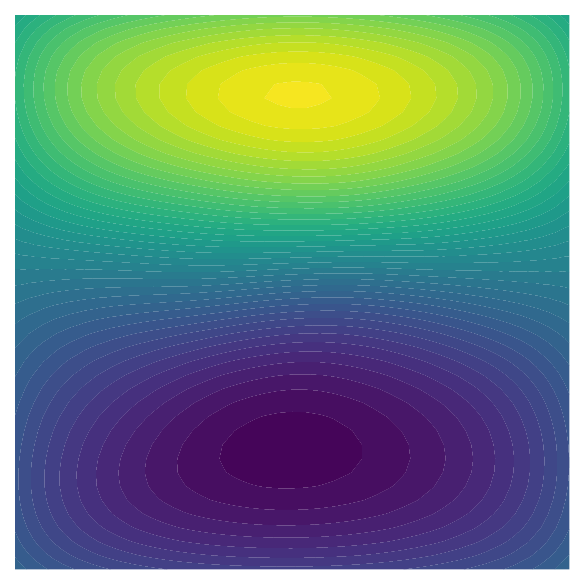}} & \raisebox{-.5\height}{\includegraphics[width = 0.15\linewidth]{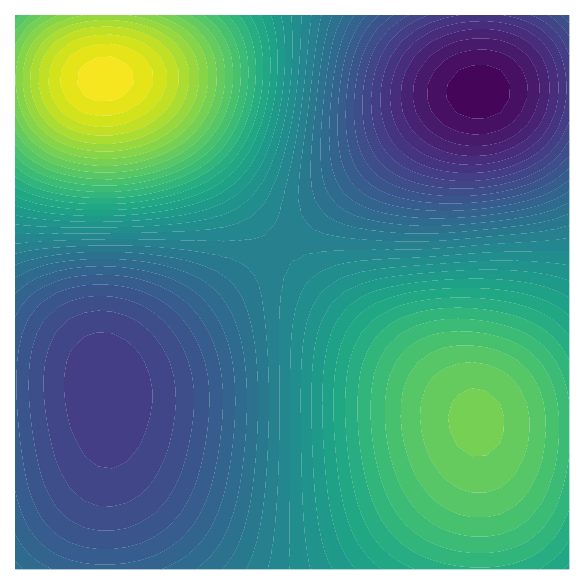}} & \raisebox{-.5\height}{\includegraphics[width = 0.15\linewidth]{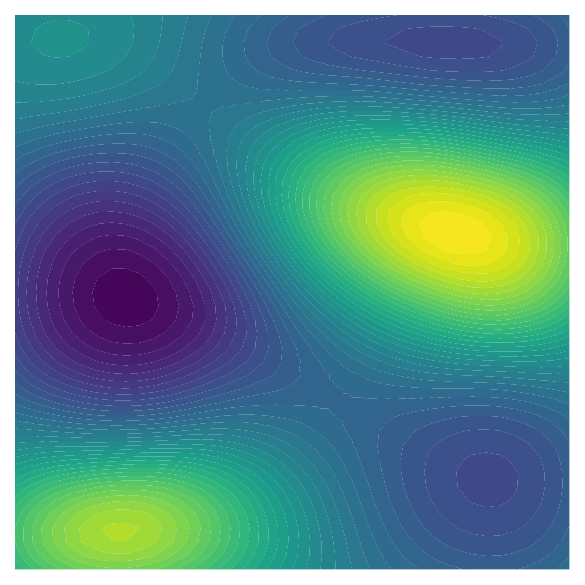}} & \raisebox{-.5\height}{\includegraphics[width = 0.15\linewidth]{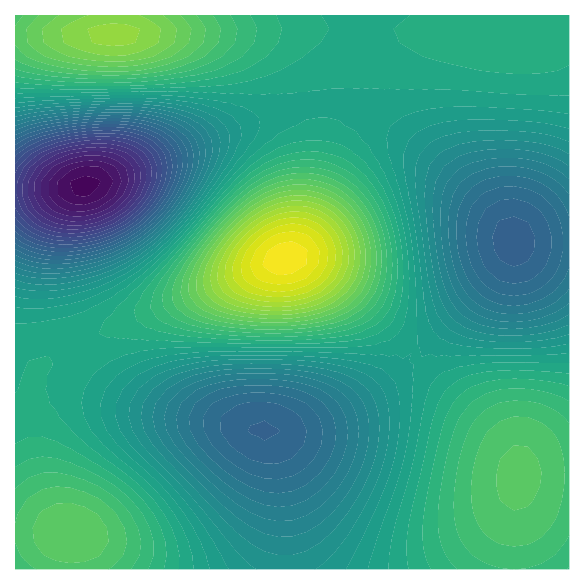}} \\\hline
       \bf KLE  & \raisebox{-.5\height}{\includegraphics[width = 0.15\linewidth]{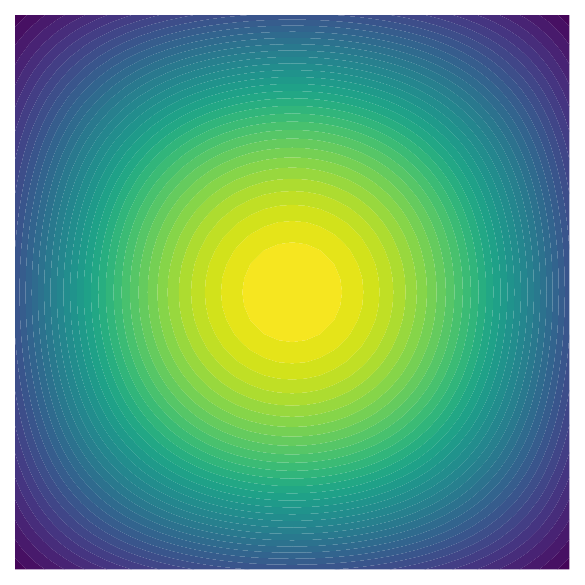}} & \raisebox{-.5\height}{\includegraphics[width = 0.15\linewidth]{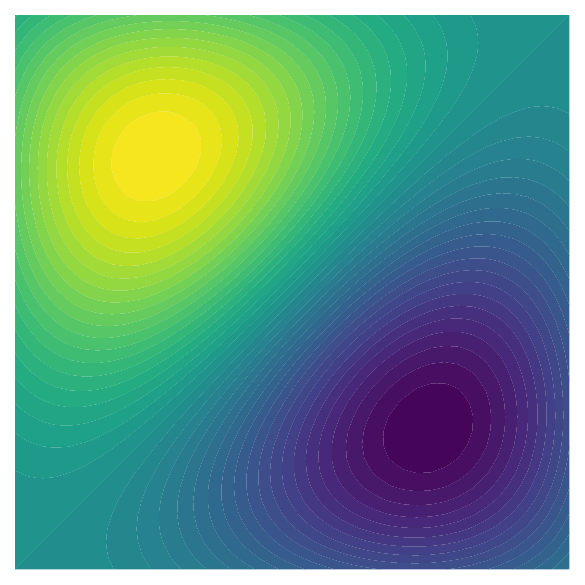}} & \raisebox{-.5\height}{\includegraphics[width = 0.15\linewidth]{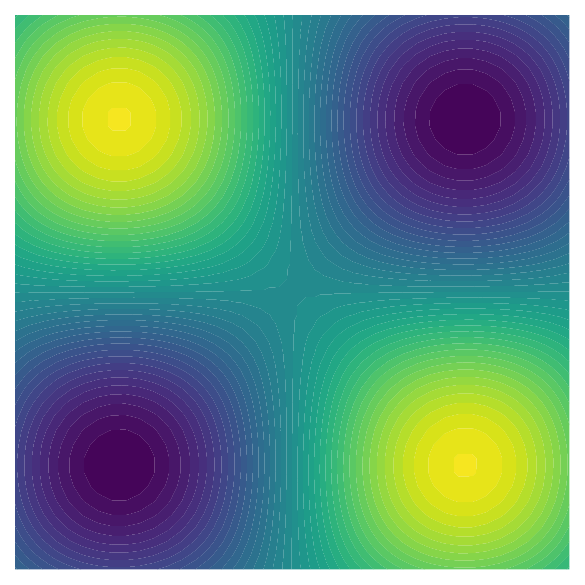}} & \raisebox{-.5\height}{\includegraphics[width = 0.15\linewidth]{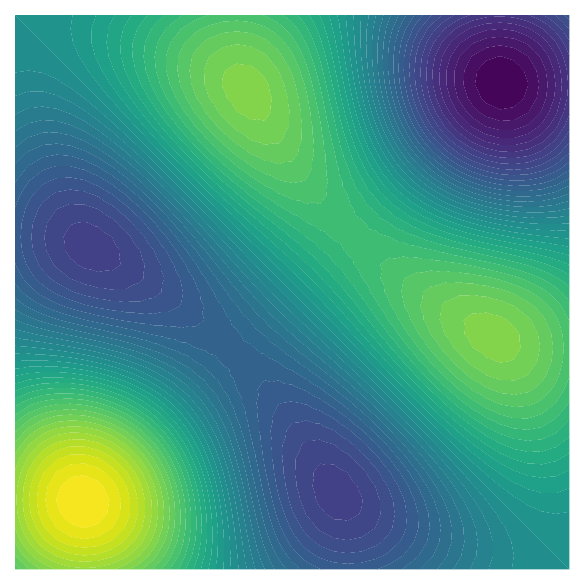}}  \\\hline
    \end{tabular}
    \caption{Visualization of selected DIS and KLE basis functions for coefficient inversion in a nonlinear diffusion--reaction PDE in \cref{sec:ndr}}
    \label{fig:ndr_basis}
\end{figure}

\begin{table}[tbp]
    \centering
    \begin{tabular}{|c | c | c | c | c |}\hline
       \bf Name  & \makecell{\bf Step size $\triangle t$\\ $\times 10^{-2}$} & \bf \makecell{Acceptance rate\\$\%$}& \bf \makecell{Mean square\\ jump $\times 10^{-3}$}  \\\hline
       pCN & 1 & 16 & 1 \\\hline
       MALA & 0.36 & 39 & 0.96 \\\hline
       LA-pCN & 2.3 & 60 & 6.0 \\\hline
       DIS-mMALA & 11 & 44 & 11 \\\hline
       mMALA & 15 & 20 & 12\\\hline
       \makecell{NO-mMALA\\$n_t=16\times 10^3$} & 1.8 & 26 & 2.1 \\\hline
       \makecell{DINO-mMALA\\$n_t= (2, 4, 8, 16)$\\$\times 10^{3}$} & $(12, 12, 13, 15)$ & $(19, 20, 20, 17)$  & $(9.9, 10, 11, 11)$ \\\hline
       \makecell{DA-NO-mMALA\\$n_t= (1, 2, 4, 8, 16)$\\$\times 10^{3}$} & \makecell{$(1, 1, 1.3,$ \\$1.5, 1.5, 1.8)$} & \makecell{$1$st: $(63, 67, 67, 70, 68)$\\$2$nd: $(11, 23, 28, 27, 35)$}  & \makecell{$(0.23, 0.45, 0.61,$\\ $0.87, 1.1)$} \\\hline
       \makecell{DA-DINO-mMALA\\$n_t= (1,2, 4, 8, 16)$\\$\times 10^{3}$} & $(11, 11, 12, 13, 14)$  & \makecell{$1$st: $(40, 29, 25, 22, 19)$\\$2$nd: $(42, 57, 67, 72, 76)$} & \makecell{$(4.6, 6.8, 7.9,$\\$ 8.7, 9.2)$} \\\hline
    \end{tabular}
    \caption{The statistics of MCMC runs for coefficient inversion in a nonlinear diffusion--reaction PDE in \cref{sec:ndr}.}
    \label{tab:ndr_mcmc_stats}
\end{table}

\begin{figure}[tbp]
    \centering
    \renewcommand{\arraystretch}{0.2}
    \addtolength{\tabcolsep}{-3pt}
    \begin{tabular}{c c c c}
    \multicolumn{4}{c}{\bf Posterior samples}\\
     \raisebox{-.5\height}{\includegraphics[width = 0.2\linewidth]{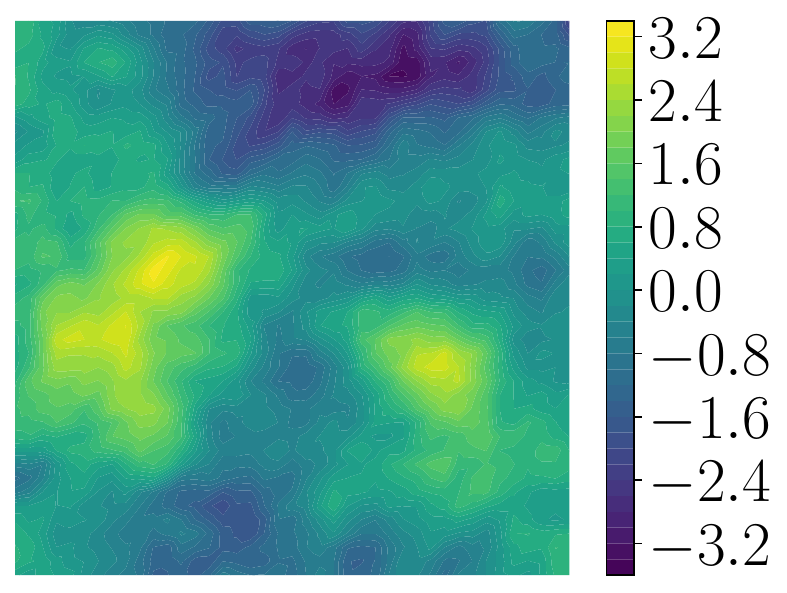}}    & 
     \raisebox{-.5\height}{\includegraphics[width = 0.2\linewidth]{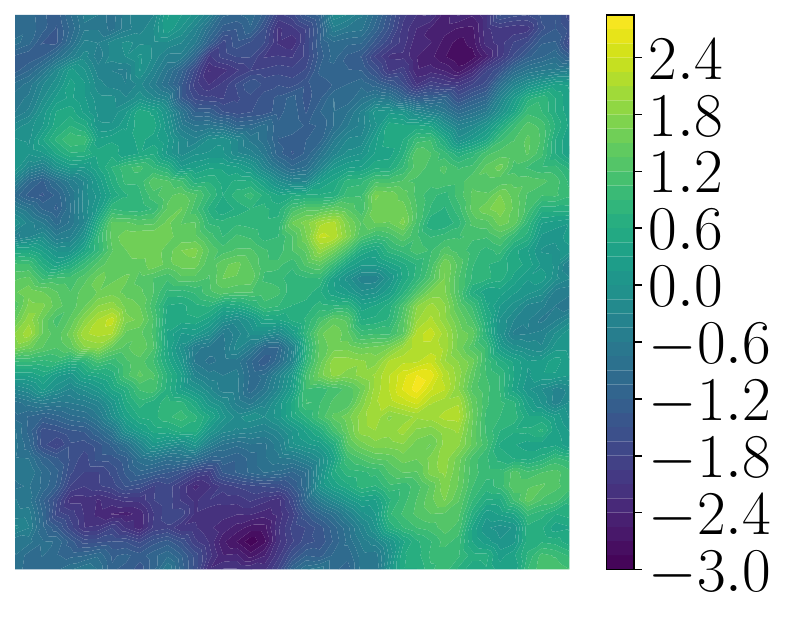}}  &
     \raisebox{-.5\height}{\includegraphics[width = 0.2\linewidth]{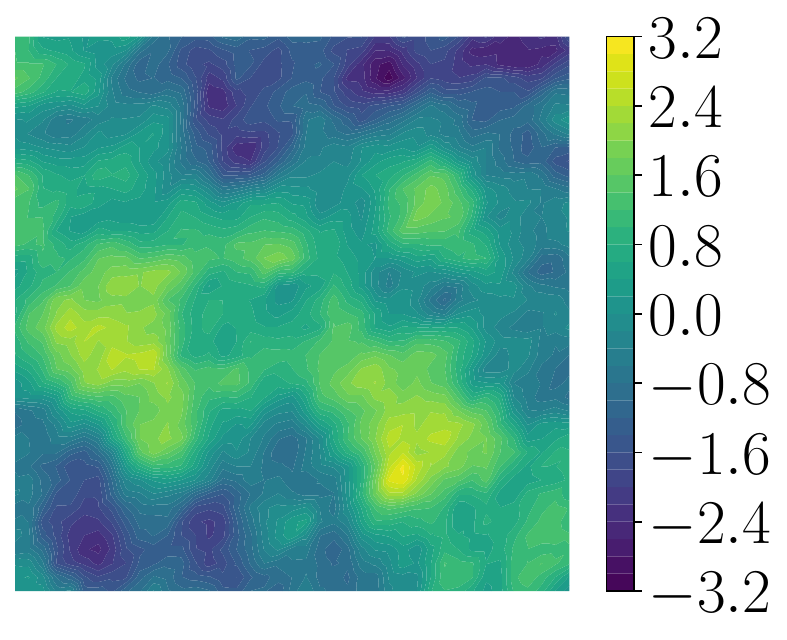}} &
     \raisebox{-.5\height}{\includegraphics[width = 0.2\linewidth]{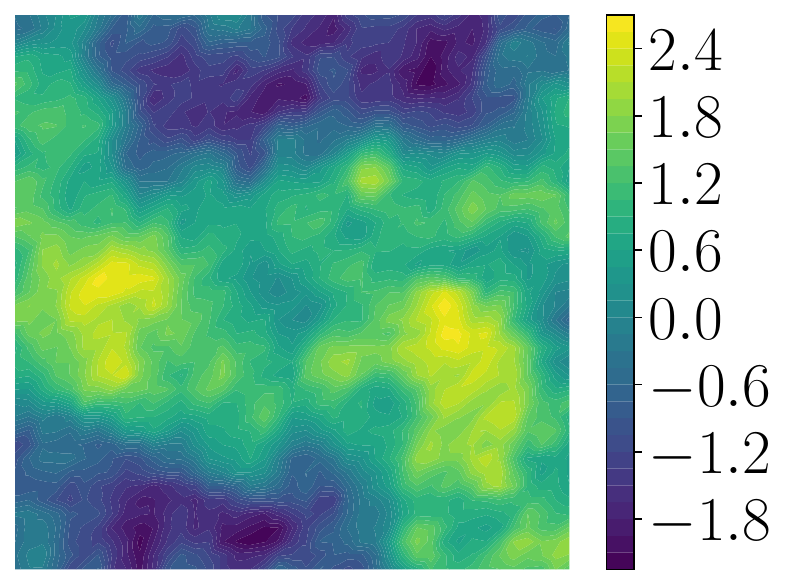}} \\
    
    \end{tabular}
    \begin{tabular}{c c c c}
        \bf \makecell{Mean estimate\\ via MCMC} & \bf \makecell{MAP--mean\\ absolute error} &  \bf \makecell{Pointwise variance\\estimate via MCMC} & \bf \makecell{LA--posterior\\ pointwise variance\\ absolute error}\\
        \raisebox{-.5\height}{\includegraphics[width = 0.2\linewidth]{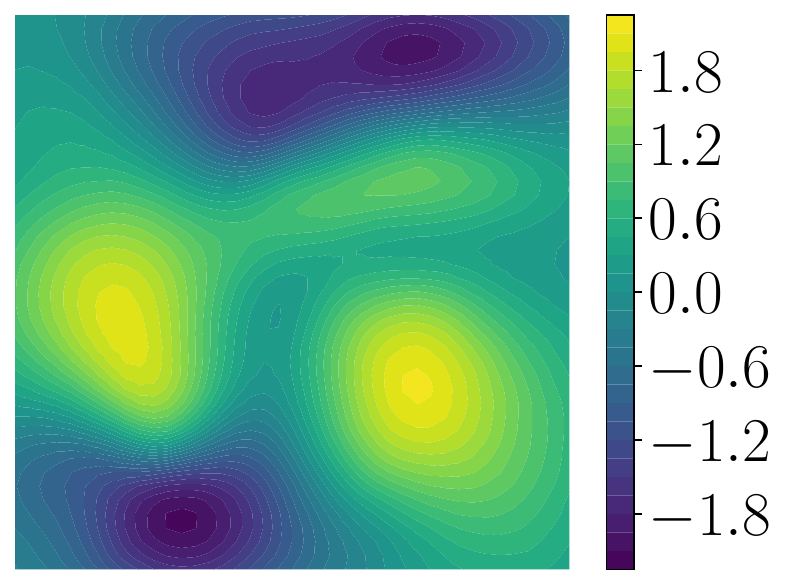}} & \raisebox{-.5\height}{\includegraphics[width = 0.2\linewidth]{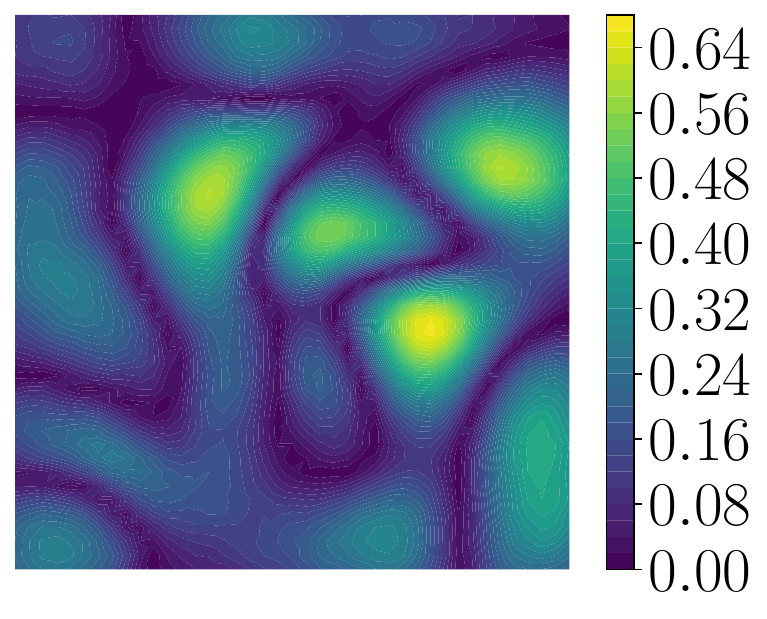}} & \raisebox{-.5\height}{\includegraphics[width = 0.2\linewidth]{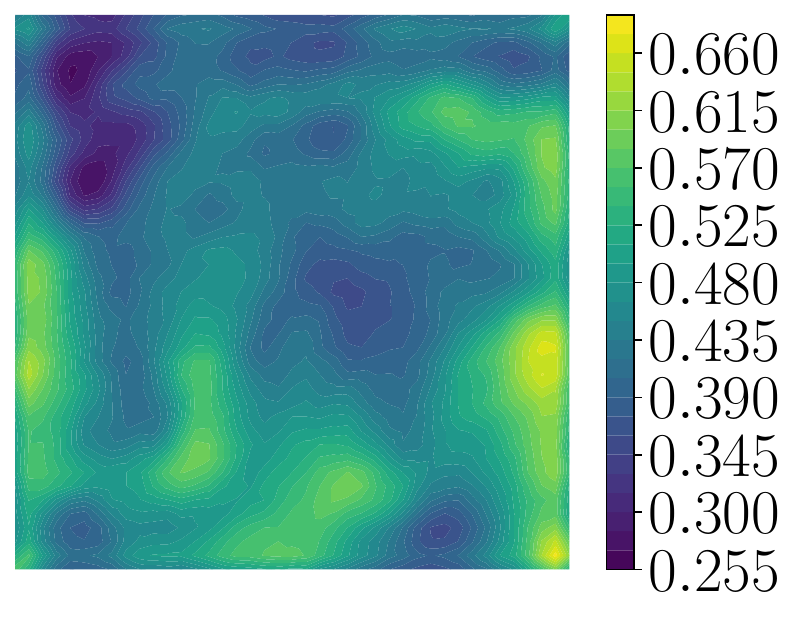}} & \raisebox{-.5\height}{\includegraphics[width = 0.2\linewidth]{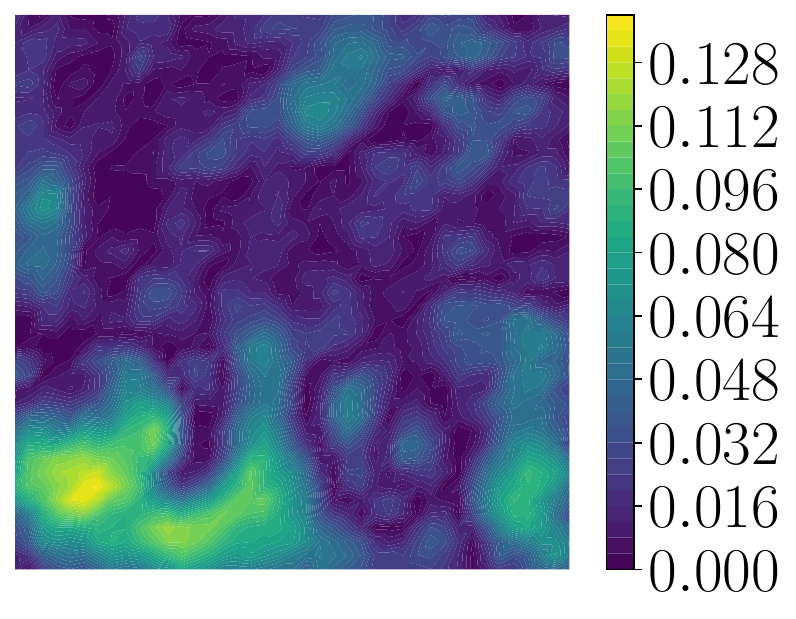}} 
    \end{tabular}
    \renewcommand{\arraystretch}{5}
    \addtolength{\tabcolsep}{3pt}
    \caption{Visualization of relevant statistics from Markov chains collected using mMALA for coefficient inversion in a nonlinear diffusion--reaction PDE in \cref{sec:ndr}.}
    \label{fig:ndr_posterior}
\end{figure}

\begin{figure}[tbp]
    \centering
    \begin{tabular}{|c| c | c | c | c|} \hline
      \bf  &\bf \#1  &\bf \#2 &\bf \#4 &\bf \#8 \\\hline
       \bf DIS & \raisebox{-.5\height}{\includegraphics[width = 0.2\linewidth]{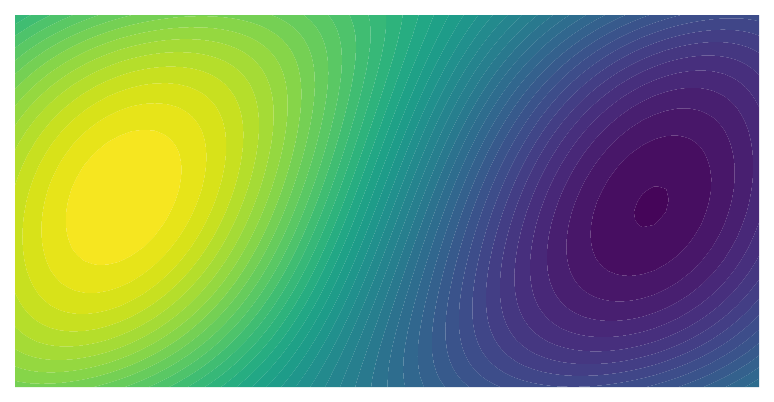}} & \raisebox{-.5\height}{\includegraphics[width = 0.2\linewidth]{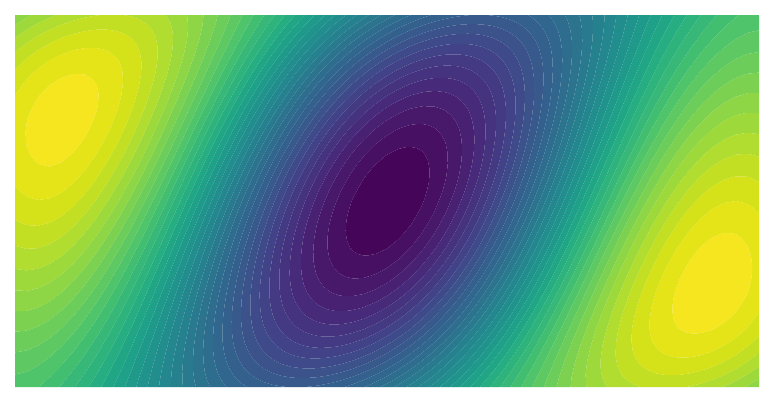}} & \raisebox{-.5\height}{\includegraphics[width = 0.2\linewidth]{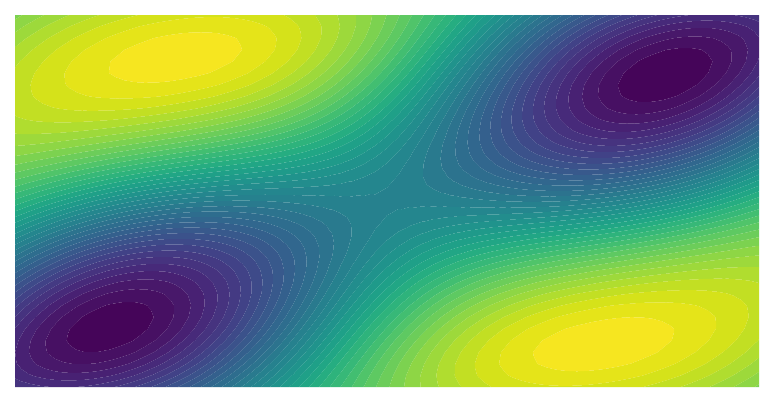}} & \raisebox{-.5\height}{\includegraphics[width = 0.2\linewidth]{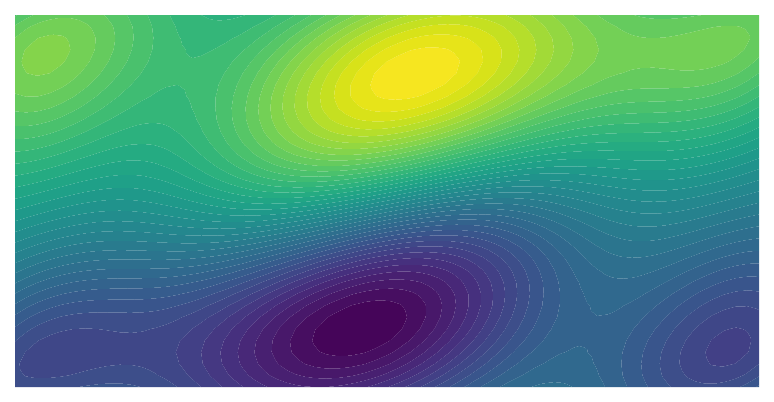}} \\\hline
       \bf KLE  & \raisebox{-.5\height}{\includegraphics[width = 0.2\linewidth]{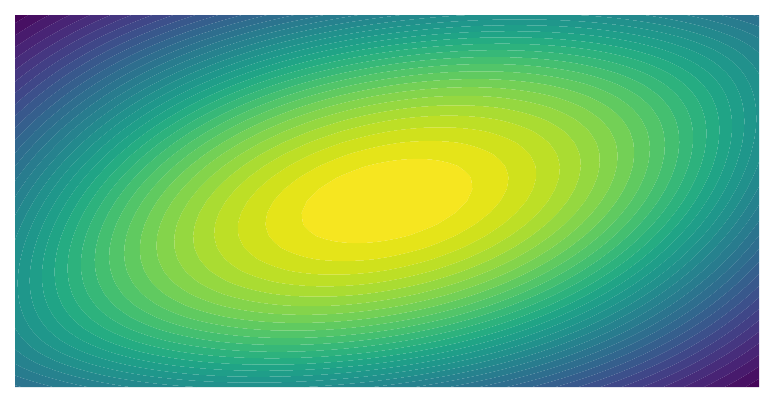}} & \raisebox{-.5\height}{\includegraphics[width = 0.2\linewidth]{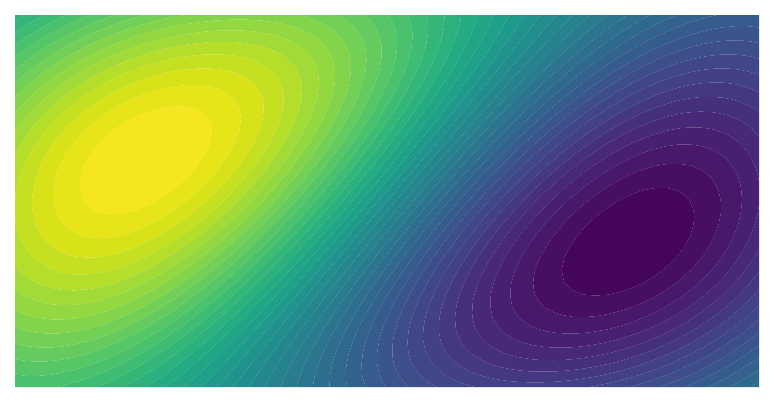}} & \raisebox{-.5\height}{\includegraphics[width = 0.2\linewidth]{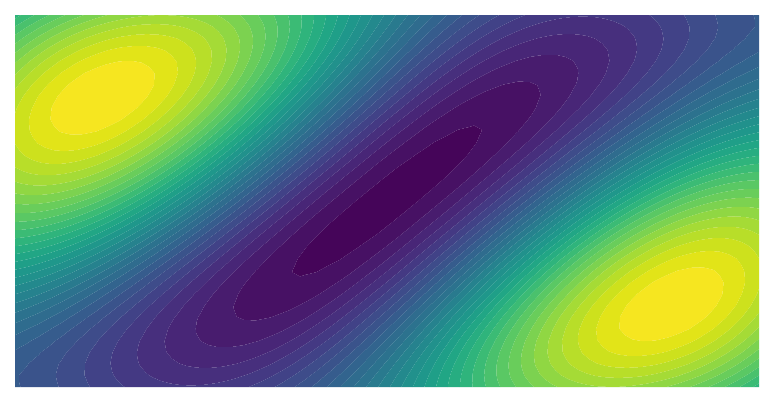}} & \raisebox{-.5\height}{\includegraphics[width = 0.2\linewidth]{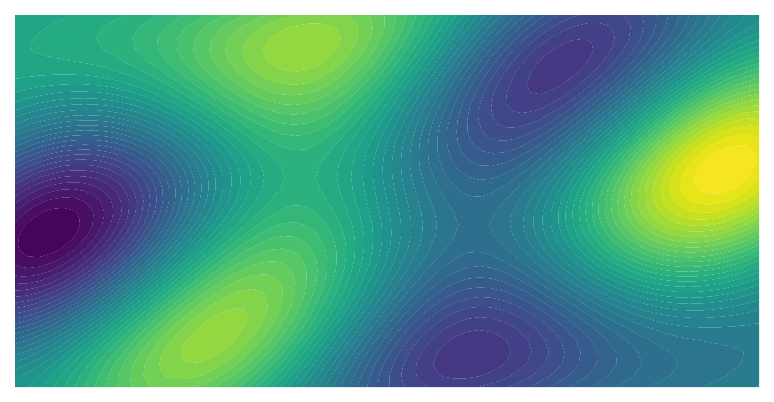}}  \\\hline
    \end{tabular}
    \caption{Visulaization of selected DIS and KLE basis functions for inference of a heterogeneous hyperelastic material property in \cref{sec:hyperelastic}}
    \label{fig:hyper_basis}
\end{figure}

\begin{table}[tbp]
    \centering
    \begin{tabular}{|c | c | c | c | c |}\hline
       \bf Name  & \makecell{\bf Step size $\triangle t$\\ $\times 10^{-2}$} & \bf \makecell{Acceptance rate\\$\%$}& \bf \makecell{Mean square jump\\ $\times 10^{-3}$}  \\\hline
       pCN & 1.0 & 30 & 0.43 \\\hline
       MALA & 0.075 & 78 & 0.40 \\\hline
       LA-pCN & 5.0 & 75 & 2.6 \\\hline
       DIS-mMALA & 150 & 64 & 26 \\\hline
       mMALA & 100 & 34 & 14 \\\hline
       \makecell{DA-NO-mMALA\\$n_t= (5, 10, 20, 40)\times 10^{2}$} & $(0.5, 1, 2, 4)$ & \makecell{$1$st: $(49, 37, 33, 44)$\\$2$nd: $(41, 41, 46, 68)$} & $(0.12, 0.16, 0.27, 0.90)$ \\\hline
       \makecell{DA-DINO-mMALA\\$n_t= (5, 10, 20, 40)\times 10^{2}$} & $(65, 65, 70, 70)$ & \makecell{$1$st: $(38, 40, 38, 39)$\\$2$nd: $(76, 87, 90, 93)$} & $(9.4, 11, 11, 12)$ \\\hline
    \end{tabular}
        \caption{The statistics of MCMC runs for inference of a heterogeneous hyperelastic material property in \cref{sec:hyperelastic}.}
    \label{tab:hyper_mcmc_stats}
\end{table}

\begin{figure}[tbp]
    \centering
    \renewcommand{\arraystretch}{0.2}
    \addtolength{\tabcolsep}{-5pt}
    \begin{tabular}{c c c c}
    \multicolumn{4}{c}{\bf Posterior samples}\\
     \raisebox{-.5\height}{\includegraphics[width = 0.24\linewidth]{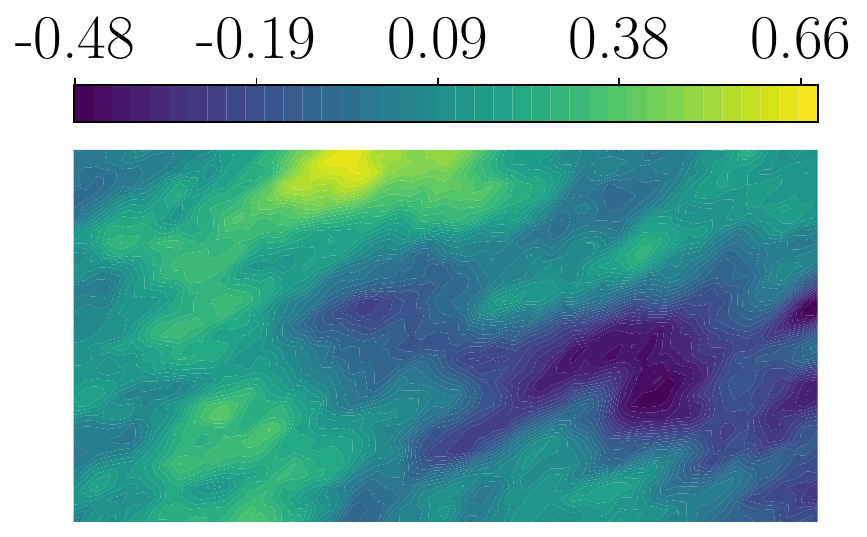}}    & 
     \raisebox{-.5\height}{\includegraphics[width = 0.24\linewidth]{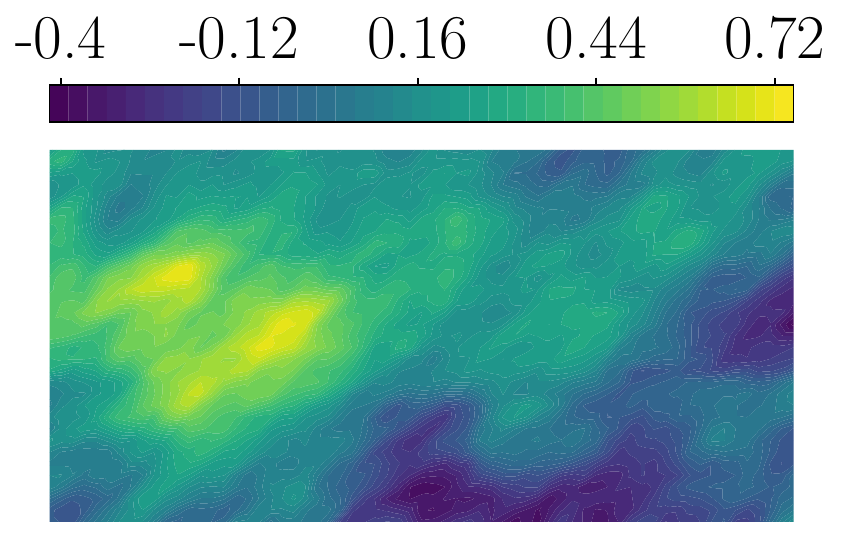}}  &
     \raisebox{-.5\height}{\includegraphics[width = 0.24\linewidth]{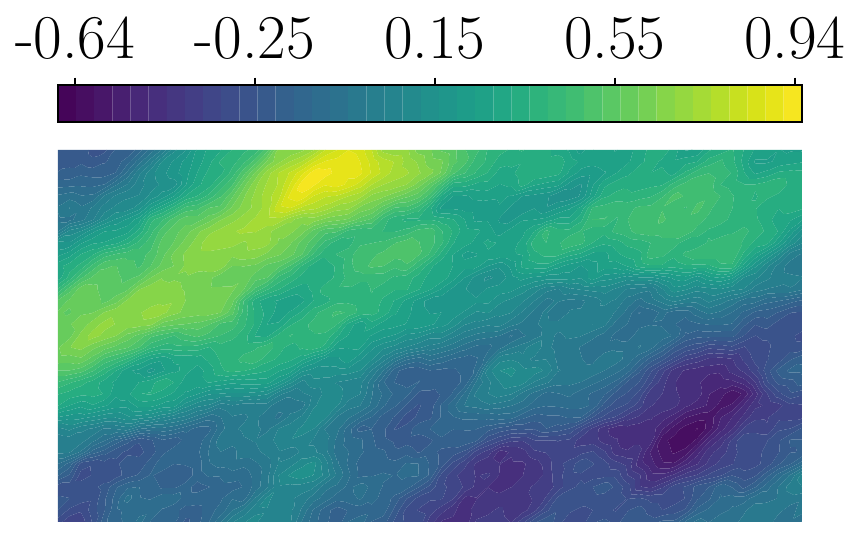}} &
     \raisebox{-.5\height}{\includegraphics[width = 0.24\linewidth]{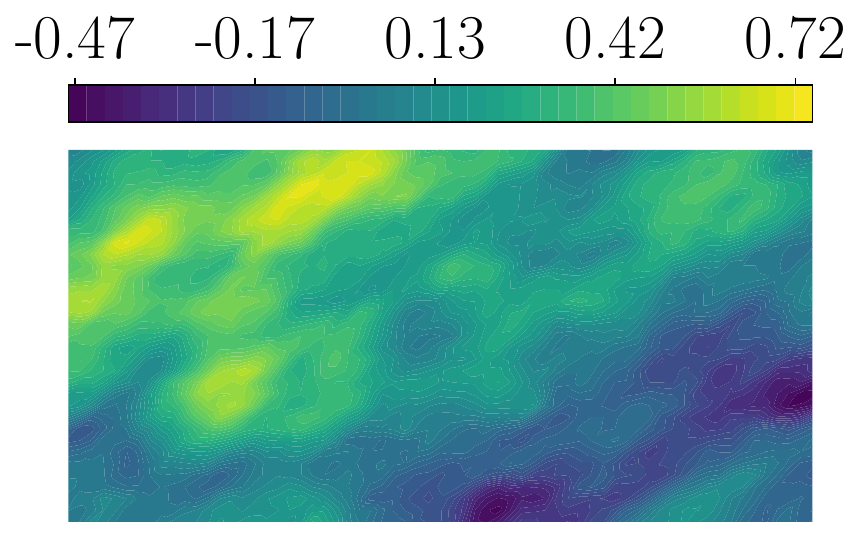}} \\
    \end{tabular}
    \begin{tabular}{c c c c}
        \bf \makecell{Mean estimate\\ via MCMC} & \bf \makecell{MAP--mean\\ absolute error} &  \bf \makecell{Pointwise variance\\estimate via MCMC} & \bf \makecell{LA--posterior\\ pointwise variance\\ absolute error}\\
        \raisebox{-.5\height}{\includegraphics[width = 0.24\linewidth]{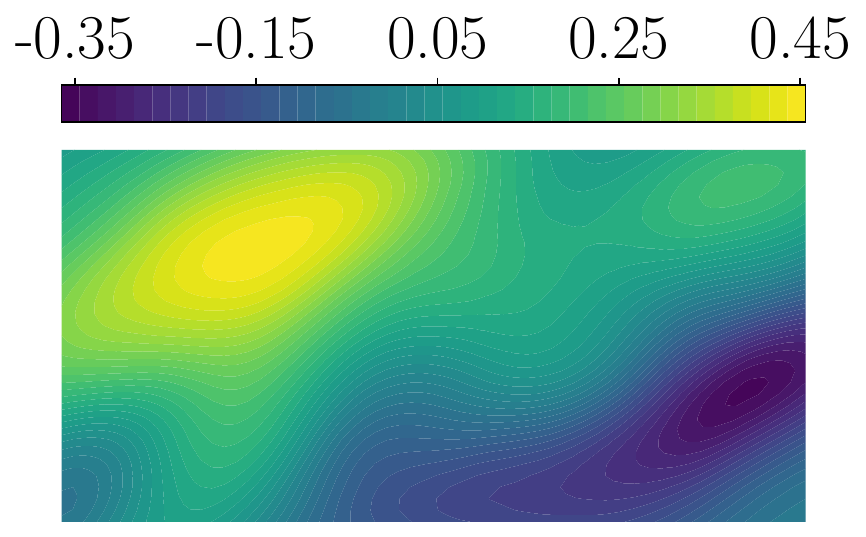}} & \raisebox{-.5\height}{\includegraphics[width = 0.24\linewidth]{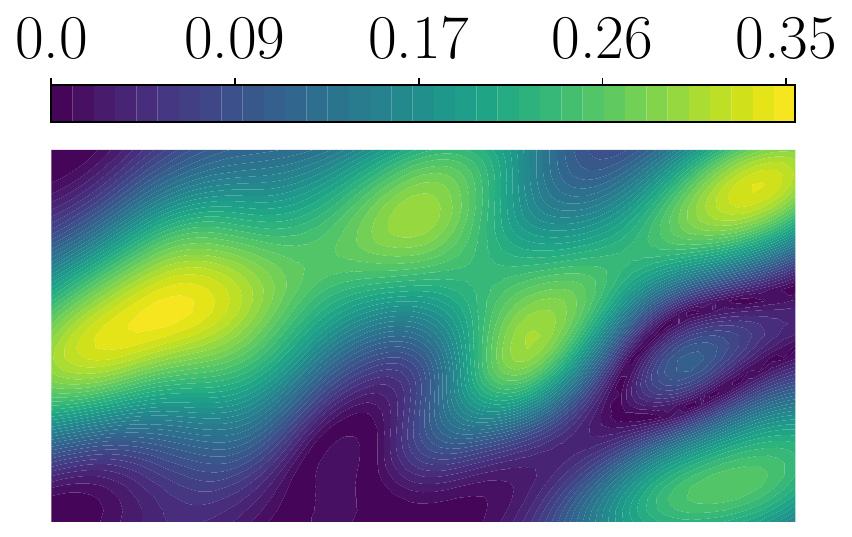}} & \raisebox{-.5\height}{\includegraphics[width = 0.24\linewidth]{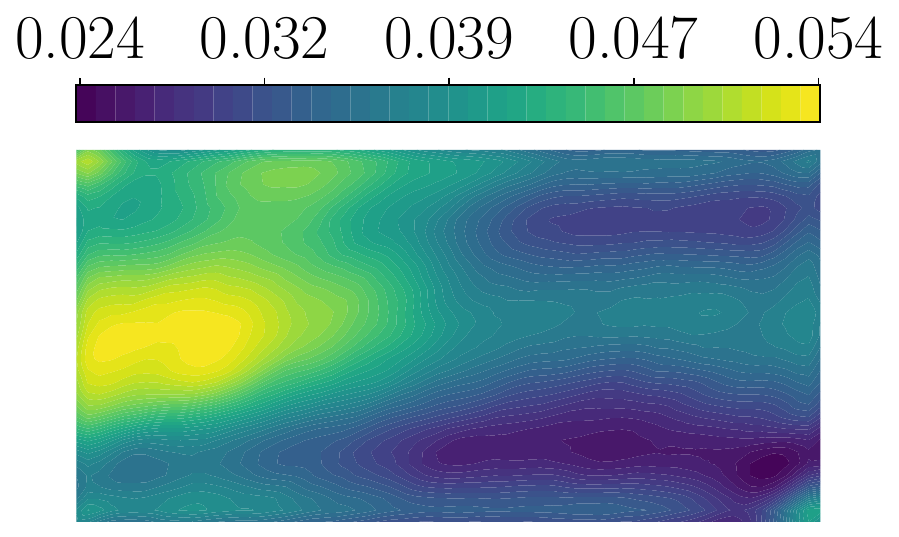}} & \raisebox{-.5\height}{\includegraphics[width = 0.24\linewidth]{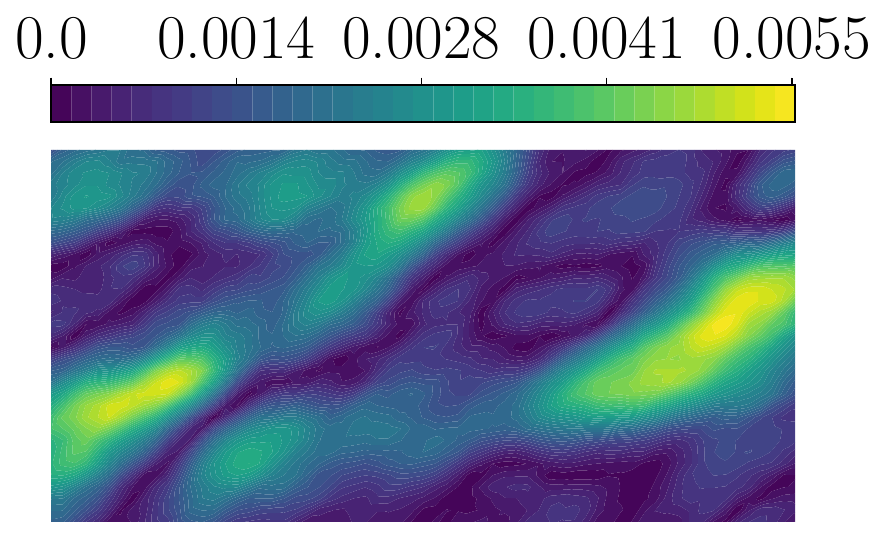}} 
    \end{tabular}
    \renewcommand{\arraystretch}{5}
    \addtolength{\tabcolsep}{5pt}
    \caption{Visualization of relevant statistics from Markov chains collected using mMALA for inference of a heterogeneous hyperelastic material property in \cref{sec:hyperelastic}.}
    \label{fig:hyper_posterior}
\end{figure}

\bibliography{main}

\end{document}